\numberwithin{equation}{section}
 \newtheorem{theorem}{Theorem}[section]
\newtheorem{lemma}[theorem]{Lemma}
\begin{document}
%normalsize
%\begin{CJK*}{GBK}{song}
\title{\bf A Neumann interface optimal control problem with elliptic PDE constraints and its discretization and numerical analysis }
%\thanks{This work is partially supported by the National Natural Science Foundation of China grant No.11471166, Natural Science Foundation of Jiangsu Province grant No.BK20141443 and the Priority Academic Program Development of Jiangsu Higher Education Institutions (PAPD).}}

\author{Zhiyue Zhang$^1$, Kazufumi Ito$^2$ and Zhilin Li$^2$ \thanks{1. School of Mathematical Sciences, Jiangsu Key Laboratory for NSLSCS,
Nanjing Normal University, Nanjing 210023, China. (zhangzhiyue@njnu.edu.cn). 2. Department of Mathematics,
North Carolina State University, 27695, USA}}

\date{}
\maketitle
%.369
\textbf{Abstract.}
We study an optimal control problem governed by elliptic PDEs with interface, which the control acts on the interface. Due to the jump of the coefficient across the interface and the control acting on the interface, the regularity of solution of the control problem is limited on the whole domain, but smoother on subdomains. The control function with pointwise inequality constraints is served as the flux jump condition which we called Neumann interface control. We use a simple uniform mesh that is independent of the interface. The standard linear finite element method can not achieve optimal convergence when the uniform mesh is used. Therefore the state and adjoint state equations  are discretized by piecewise linear immersed finite element method (IFEM). While the accuracy of the piecewise constant approximation of the optimal control on the interface is improved by a postprocessing step which possesses superconvergence properties; as well as the variational discretization concept for the optimal control is used to improve the error estimates. Optimal error estimates for the control, suboptimal error estimates for state and adjoint state are derived. Numerical examples with and without constraints are provided to illustrate the effectiveness of the proposed scheme and correctness of the theoretical analysis.

\textbf{Keywords:} PDE-constrained optimization, variational discretization, immersed finite element, elliptic interface problem, interface control.

\textbf{AMS:} 49J20, 49M05, 65M60, 65N30, 35Q93.
~\\

\section{Introduction}{\label{section1}}

Optimal control problems governed by elliptic PDEs with interfaces arise in many applications, such as the optimization or optimal control of a process in a domain composed of several materials separated by curves or surfaces (called interfaces), or of a porous medium and an adjacent free-flow region appearing in a wide range of industrial, medical, atmospheric and environmental applications\cite{pinnau2008optimization,lions1971optimal,mu2006conditional,ito2008lagrange,HU2000Boundary}. Coefficients in the elliptic PDEs may have a jump across the interface corresponding to different materials and media\cite{baber2012numerical,babuvska1970finite,vassilev2009coupling}. People pay more attention to the interface conditions that are to be determined such that the response of a physical or engineering system is optimal in some sense. This problem can often be summarized as the minimization of a cost functional subject to partial differential equations with interface. We call this kind of control problems the interface control problem. It has more important applications in science and engineering areas. For example, evaporation is an important process in the climatic or synoptic system because evaporation rates and patterns affect the energy balance of terrestrial or oceanic surfaces and drive a multitude of climatic and synoptic process. Predictability of evaporative rates remains a challenge due to the radiation, humidity, temperature, air velocity, turbulent conditions at the interface. Other examples involve groundwater, flow in fuel cells, turbulence, crystal growth, blood vessels, semiconductor materials and complex material problems\cite{rosenzweig2007laminar,muthing2012dune,dubljevic2010boundary,kauffmann2000optimal,layton2002coupling,
cao2010finite,gunzburger1991analysis,kafafy2005three}. Hence, it is a challenge to develop efficient numerical methods for such interface optimal control problems. In \cite{zhang2015immersed} we show the case of distributed control with interface PDE constraints, which is the easier one with respect to the mathematical analysis. The analysis for interface control problem is more difficult since the regularity of the state function is lower than that for distributed controls. Moreover the internal approximation of the domain causes problems. There are some research results for error estimates of the Neumann boundary control problem and the regularity of the solutions for the Dirichlet boundary control problem in polygonal domains in \cite{Deckelnick2009Finite, apel2015finite, apel2015regularity} and the references therein.  To simplify the analysis, we assume here that the domain is a convex polygonal domain. Although this makes things or problems simpler, the low regularity of state on a convex polygonal domain and the discontinuity across the interface and coefficients are still very complicated to study or solve numerically. Here all main variants of elliptic problems have been studied such as distributed control\cite{gunzburger1991analysis,li2002adaptive,he2023novel,lions1971optimal,meyer2006optimal,vexler2008adaptive,hintermuller2009pde,allendes2022error}, boundary control\cite{hinze2009note,krumbiegel2010priori,casas2005error,apel2015finite,chen2021norm,vexler2007finite,
apel2012finite,casas2012approximation,mateos2011saturation,Chowdhury2017,Wachsmuth2016,Xie}, distributed observation and boundary observation\cite{beuchler2013boundary,sano2011neumann,krumbiegel2010priori}. Applying postprocessing \cite{meyer2004superconvergence} and variational discretization techniques\cite{hinze2005variational}, we obtained an error estimate between a locally optimal and the numerical solution in the $L^2$ norm. This estimate holds for a piecewise linear immersed finite element method and piecewise constant control functions.

Elliptic interface problems have been extensively discussed in the literatures\cite{pan2021high,Hansbo2002An,adjerid2023enriched,chen1998finite,li2017accurate,guo2021error,Wang2021New}. There are different methods such as finite difference method\cite{pan2021high,chen2023arbitrarily}, finite element method\cite{gong2008immersed,chen2023semi,li2003new,lin2007error,hou2010numerical}, finite volume element method\cite{ewing1999immersed,zhu2015immersed,he2009bilinear,wang2021bilinear}, and penalized finite element method\cite{lin2015partially} to solve  these problems. We use a uniform Cartesian mesh in our method. How to design  accurate methods on unfitted meshes has attracted a lot of attention in the literature. The immersed finite element method (IFEM) proposed in \cite{li2003new} is among a few methods that based on linear finite element discretizations and unfitted meshes, for example, uniform triangulations. The idea of the IFEM is to modify the basis functions in the interface triangles so that the interface conditions are satisfied. Optimal approximation capabilities of the immersed finite element space have been proved in \cite{li2004immersed}. And optimal error estimates in $L^2$ and $H^1$ norms have been given in \cite{gong2008immersed}.

Finite element approximations of optimal control problems with PDE constraints are important for the numerical treatment of optimal control problems related to practical applications. Numerical methods for optimal control problems governed by elliptic PDEs have been discussed in many publications (see, e.g., \cite{pinnau2008optimization,troltzsch2010optimal,liu2008adaptive,gong2011mixed,krumbiegel2010priori,vexler2007finite,
apel2012finite,casas2021numerical,sano2011neumann,casas2012error,maurer2000optimization,
lee2000analysis,meyer2008error} and therein references). An overview on the numerical a priori and a posteriori analysis for elliptic control problems can be found in \cite{pinnau2008optimization,troltzsch2010optimal,liu2008adaptive}. However, to the best of our knowledge, there are few papers that concern the numerical method based on unfitted meshes for the Neumann interface optimal control problems governed by elliptic PDEs with discontinuous coefficient.

In this paper, we study the Neumann interface control problem with PDE constraints. Because the control acts on the interface, and the flux jump condition is nonhomogeneous, the optimal control solution has lower regularity on whole domain. Thus this results in difficulties on both theoretical analysis and numerical computations for the interface control problem. Because the presence of control constraints on the interface makes the arising first order necessary conditions non-smooth, it is so hard to use higher order discretization techniques, thus the interface control problem governed by elliptic interface problems is discretized by means of the piecewise linear IFEM on uniform triangulations for the state and adjoint state, while for control, only piecewise constant. We prove suboptimal convergence rates for the state and adjoint state variables. When one uses piecewise constant approximations of the control, the expected order of approximation of the state is greater than $1$. However we prove the convergence order of the state for interface control problem with elliptic PDE constraints with discontinuous coefficient is $3/2$. This key results make us to the same kind of superconvergence for the approximations of corresponding adjoint state for the interface control problem. It is possible to obtain second order convergence of error estimates for elliptic distributed optimal control problems by using the variational discretization concept\cite{hinze2005variational} and postprocessing approach\cite{meyer2004superconvergence}. A better approximation for the error estimates of the control on the interface has been proved based on considering both of approaches of improvement approximations order. In the case of the control without constraints, the discretization leads to a symmetric but indefinite system of equations. A block diagonally preconditioned MINRES algorithm \cite{paige1975solution,rees2010optimal} can be used to solve the indefinite system. In the case of the control with constraints, a nonlinear and non-smooth equation for the discrete control is obtained. A fix-point iteration or the semi-smooth Newton algorithm can be used to solve that single equation for the discrete control.  Optimal error estimates for the control, suboptimal error estimates for the state and adjoint state, are derived, which are the same as that of a Neumann boundary control problem without interfaces.

The paper is organized as follows: In Section~\ref{sec_model}, the model problem is introduced and optimality conditions and regularity results of the problem are given. Section~\ref{num} presents the discretization of the interface optimal control problem based on piecewise constant for the control on the interface and the linear immersed finite element method for both of the state and adjoint state. And some error estimates for the control on the interface, the state and the adjoint state for the interface control problem are derived in Section~\ref{error}. Some conclusions are made in Section \ref{sec_con}.

\section{Model problem}{\label{sec_model}}
\subsection{Constraint equation}{\label{consequ}}
Consider the elliptic interface problem,
\begin{eqnarray}\label{state1}
&&-\nabla \cdot (\beta(\mathbf{x})\nabla y(\mathbf{x}))=f(\mathbf{x}), \quad\mbox{ in }\Omega\backslash\Gamma,\\ \label{state2}
&&\left[y(\mathbf{x})\right]_{\Gamma}=0,~~\left[\beta\partial_\mathbf{n} y(\mathbf{x})\right]_{\Gamma}=u(\mathbf{x}),\\\label{state3}
&&y(\mathbf{x})=0, \quad\mbox{ on }\partial \Omega,
\end{eqnarray}
where $\Omega\subset R^2$ is a bounded, polygonal domain with Lipschitz boundary separated by a closed interface $\Gamma\in C^2$, $[v]_\Gamma=(v|_{\Omega^-})|_{\Gamma}-(v|_{\Omega^+})|_{\Gamma}$, denotes the jump of the function $v(\mathbf{x})$ across an interface $\Gamma$. We call the first jump condition the Dirichlet jump condition, the second one the Neumann jump condition in (\ref{state2}). We assume that the interface $\Gamma$ separates the domain $\Omega$ into two sub-domains $\Omega^+$ and $\Omega^-$, and $\Omega^-$ lies strictly inside $\Omega$, see Figure~\ref{pic1} for an illustration. The vector $\mathbf{n}$ is the unit normal direction of $\Gamma$ pointing
to $\Omega^+$. The coefficient $\beta(\mathbf{x})$ is a positive and piecewise constant, that is,

\begin{equation}
\beta(\mathbf{x})=\left\{ \begin{array}{ll}
\beta^{-}(\mathbf{x}), ~~~&\mathbf{x}\in \Omega^-,\\
\beta^{+}(\mathbf{x}), ~~~&\mathbf{x}\in \Omega^+.\\
\end{array}
\right.
\end{equation}

\begin{figure}[!htp]
\begin{center}
       \includegraphics[width=0.3\textwidth,clip]{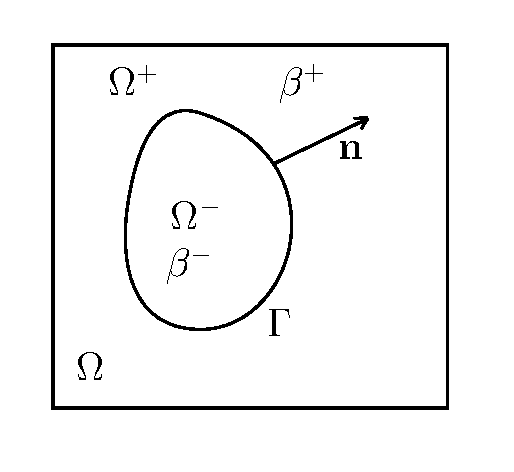}
  \caption{~The geometry of an elliptic interface problem.   }\label{pic1}
 \end{center}
\end{figure}

The weak formulation of the above state equation with interface (\ref{state1})-(\ref{state3}) can be stated as follows: find $y\in H_0^1(\Omega)$ such that

\begin{equation}\label{con_state}
a(y,v)=(f,v)+\langle u, v\rangle, ~~~~\forall v\in H^1_0(\Omega),
\end{equation}
where $a(y,v)=\sum_{s=\pm}\int_{\Omega^s}\beta^s\nabla y\cdot \nabla vd\mathbf{x}$,  $(f,v)=\int_{\Omega}fvd\mathbf{x}$, and
$\langle u, v\rangle=\int_\Gamma uvd\Gamma$. If $y=q+y_\sigma$, then this weak formulation (\ref{con_state}) is equivalent to

\begin{equation}\label{con_state1}
a(q,v)=(f,v)+\langle u, v\rangle-(\beta^{-}\nabla y_{\sigma},\nabla v)_{\Omega^{-}}, ~~~~\forall v\in H^1_0(\Omega),
\end{equation}
where $q$ is a weak solution of (\ref{state1}), (\ref{state3}) with homogeneous jump conditions in (\ref{state2}), $y_\sigma$ is a smooth function on subdomain $\Omega^-$ and satisfies homogeneous Dirichlet jump condition and nonhomogeneous Neumann jump condition on the interface{\cite{gong2008immersed}}. We can introduce the linear and continuous operator $S: L^2(\Gamma)\rightarrow H^1_0(\Omega)$ that associates an element $u\in L^2(\Gamma)$ with the unique weak solution $y\in H^1_0(\Omega)$ of (\ref{state1})-(\ref{state3}).

\begin{lemma}{\label{lm1}}
Suppose that $\Omega$ is convex, if the function $u\in H^{1/2}(\Gamma)$ and the interface $\Gamma\in C^2$. Then there exists a unique solution $y\in \widetilde{H}^2(\Omega)\cap H_0^1(\Omega)$ of problem (\ref{con_state}) such that
\begin{equation}\label{prio}
\|y\|_{\widetilde{H}^{2}(\Omega)\cap H_0^1(\Omega)}\leq C(\|f\|_{L^2(\Omega)}+\|u\|_{H^{1/2}(\Gamma)}),
\end{equation}
where $$\widetilde{H}^2(\Omega):=\left\{y\in H^1(\Omega):~y\in H^2(\Omega^s),~s=+,~-\right\},$$
equipped with the norm
$$\|y\|^2_{\widetilde{H}^2(\Omega)}:=\|y\|^2_{H^2(\Omega^+)}+\|y\|^2_{H^2(\Omega^-)}.$$
\end{lemma}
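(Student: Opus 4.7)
Existence and uniqueness of $y\in H_0^1(\Omega)$ for \eqref{con_state} is standard: the bilinear form $a(\cdot,\cdot)$ is continuous and coercive on $H_0^1(\Omega)$ thanks to Poincaré and the positivity of $\beta^\pm$, while $v\mapsto(f,v)+\langle u,v\rangle$ is a bounded linear functional on $H_0^1(\Omega)$ (the trace of $v$ on $\Gamma$ is controlled by $\|v\|_{H^1}$). So the whole content of the lemma is the $\widetilde H^2$-regularity and the a priori bound.

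The plan is to reduce the problem with the nonhomogeneous flux jump to one with homogeneous jumps, as the decomposition $y=q+y_\sigma$ written after \eqref{con_state1} already anticipates. Because $\Gamma\in C^2$ is strictly inside $\Omega$, I can work in a tubular neighborhood of $\Gamma$ and construct an explicit lifting $y_\sigma$ supported in $\overline{\Omega^-}$ such that $y_\sigma|_\Gamma=0$ and $\beta^-\partial_{\mathbf n}y_\sigma|_\Gamma=u$. Concretely, in $(s,t)$-coordinates (arclength along $\Gamma$, signed distance with $t<0$ inside $\Omega^-$), one takes $y_\sigma(s,t)=-t\,\eta(t)\,u(s)/\beta^-$ with a smooth cutoff $\eta$. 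Standard trace theory for $C^2$ hypersurfaces together with $u\in H^{1/2}(\Gamma)$ gives $y_\sigma\in H^2(\Omega^-)$ and the bound
\begin{equation*}
\|y_\sigma\|_{H^2(\Omega^-)}\le C\|u\|_{H^{1/2}(\Gamma)}.
\end{equation*}

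Setting $q=y$ on $\Omega^+$ and $q=y-y_\sigma$ on $\Omega^-$, the extended function $q$ lies in $H_0^1(\Omega)$ (since $\partial\Omega\subset\partial\Omega^+$ and $y_\sigma$ vanishes near $\partial\Omega$), satisfies the homogeneous Dirichlet and Neumann jump conditions $[q]_\Gamma=0$, $[\beta\partial_{\mathbf n}q]_\Gamma=0$, and solves
\begin{equation*}
-\nabla\!\cdot(\beta\nabla q)=\tilde f\quad\text{in }\Omega\setminus\Gamma,\qquad \tilde f:=f+\chi_{\Omega^-}\nabla\!\cdot(\beta^-\nabla y_\sigma)\in L^2(\Omega),
\end{equation*}
with $\|\tilde f\|_{L^2(\Omega)}\le\|f\|_{L^2(\Omega)}+C\|u\|_{H^{1/2}(\Gamma)}$. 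Now I can invoke the $\widetilde H^2$-regularity for the elliptic interface problem with homogeneous jumps, piecewise constant $\beta$, convex polygonal $\Omega$ and $C^2$ interface $\Gamma$ (Chen–Zou type estimate, cited e.g.\ through \cite{chen1998finite,gong2008immersed}), which yields $q\in\widetilde H^2(\Omega)\cap H_0^1(\Omega)$ with $\|q\|_{\widetilde H^2(\Omega)}\le C\|\tilde f\|_{L^2(\Omega)}$. Adding back $y_\sigma$ on $\Omega^-$ recovers $y\in\widetilde H^2(\Omega)\cap H_0^1(\Omega)$ together with the estimate \eqref{prio}.

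The routine parts are coercivity/uniqueness and the jump bookkeeping after the decomposition. The delicate point—and the place I would be most careful—is the regularity step applied to $q$: one must ensure that the convexity of $\Omega$, the $C^2$ smoothness of $\Gamma$, and the piecewise-constant nature of $\beta$ together give a genuine $\widetilde H^2$ (and not merely $H^{3/2+\varepsilon}$) estimate with the right constant, and that the corners of $\partial\Omega$ interact only with the smoother subdomain $\Omega^+$ where $\beta$ is constant. This is exactly why $\Omega^-\subset\subset\Omega$ and the convexity of $\Omega$ are assumed.
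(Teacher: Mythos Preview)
Your strategy---Lax--Milgram for well-posedness in $H_0^1(\Omega)$, then a lifting $y_\sigma$ absorbing the nonhomogeneous flux jump, then the known $\widetilde H^2$ regularity for the homogeneous-jump problem applied to $q$---is correct and is exactly the decomposition $y=q+y_\sigma$ the paper records in \eqref{con_state1}; the paper's own proof is a one-line appeal to Lax--Milgram and the a~priori estimates in \cite{huang2002some}.

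One technical point needs repair. Your explicit candidate $y_\sigma(s,t)=-t\,\eta(t)\,u(s)/\beta^-$ is \emph{not} in $H^2(\Omega^-)$ for general $u\in H^{1/2}(\Gamma)$: the tangential derivatives $\partial_s y_\sigma$ and $\partial_s^2 y_\sigma$ involve $u'$ and $u''$, which are not in $L^2(\Gamma)$ at this regularity, so even the $H^1$ bound on $y_\sigma$ fails. Drop the explicit formula and invoke instead the surjectivity of the trace map $H^2(\Omega^-)\to H^{3/2}(\Gamma)\times H^{1/2}(\Gamma)$, $v\mapsto(v|_\Gamma,\partial_{\mathbf n}v|_\Gamma)$, to obtain $y_\sigma\in H^2(\Omega^-)$ with $y_\sigma|_\Gamma=0$, $\beta^-\partial_{\mathbf n}y_\sigma|_\Gamma=u$, and $\|y_\sigma\|_{H^2(\Omega^-)}\le C\|u\|_{H^{1/2}(\Gamma)}$; a smooth cutoff in the normal variable then localizes its support away from $\partial\Omega$. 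With this correction the remainder of your argument (the jump bookkeeping for $q$, the $\widetilde H^2$ estimate via \cite{chen1998finite,huang2002some}, and the recombination) goes through unchanged.
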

\begin{proof}
The proof is straightforward following the Lax-Milgram theorem and dual arguments\cite{brenner2008mathematical,babuvska1979direct} , see also a similar proof in \cite{huang2002some}.
\end{proof}

\subsection{Cost functional}{\label{cosfunc}}
In this paper, we will focus on the following Neumann interface control problem with PDE-constraints.
\par
$\mathbf{Problem:}$$(\mathbf{P})$ %\label{mainp}
Consider the minimization of the cost functional
\begin{equation}\label{eq002}
J(y,u)=\frac{1}{2}\int_{\Omega}(y(\mathbf{x})-y_d(\mathbf{x}))^2d \mathbf{x}+
\frac{\alpha}{2}\int_{\Gamma}u^2(\mathbf{x})d\Gamma ,
\end{equation}
over all $(y,u)\in (\widetilde{H}^{2}(\Omega)\cap H_0^1(\Omega))\times L^2(\Gamma)$ subject to the elliptic interface problem (\ref{state1})-(\ref{state3}) and the control constraints
\begin{equation}\label{eq003}
u_a\leq u\leq u_b ~~a.e.~~on ~~\Gamma.
\end{equation}

The regularization parameter $\alpha$ is a fixed positive number and the set of admissible controls for $(\mathbf{P})$ can be written as
$$ U_{ad}=\left\{u\in L^{2}(\Gamma):u_a\leq u\leq u_b ~~a.e.~~on ~~\Gamma\right\}.$$
$\mathbf{Remark1:}$ %${\label{rm1}}$
The reduced formulation of problem $(\mathbf{P})$ is now given by
\begin{equation}{\label{redop}}
\min\limits_{u\in U_{ad}} \widehat{J}(u):=\frac{1}{2}\|Su-y_d\|^2_{L^2(\Omega)}+\frac{\alpha}{2}\|u\|^2_{L^2(\Gamma)},
\end{equation}
where $\widehat{J}$ is called reduced cost functional.

A control $u^*\in L^2(\Gamma)$ is called an interface optimal control or solution of problem $(\mathbf{P})$ with associated optimal state $y^*=Su^*$ if
\begin{equation}\label{opsolu}
  \widehat{J}(u^*)\leq \widehat{J}(u), ~~~~\forall u\in U_{ad}.
\end{equation}

Next we will state the existence of a solution for Problem $(\mathbf{P})$.
\begin{lemma}{\label{lm2}}
Assume that the admissible control set $U_{ad}$ is convex and closed. The optimization problem (\ref{eq002}) together with the weak formulation (\ref{con_state}) of the linear interface state equation has a unique solution $(y^*, u^*)\in {H}^{1}_0(\Omega)\times L^2(\Gamma)$.
\end{lemma}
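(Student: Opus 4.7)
The plan is to pass to the reduced problem \eqref{redop} and apply the direct method of the calculus of variations. Since Lemma~\ref{lm1} (or more precisely the continuity of $S:L^2(\Gamma)\to H^1_0(\Omega)$ already invoked in the excerpt) identifies the state $y=Su$ uniquely with $u$, it is enough to establish existence and uniqueness of a minimizer $u^*\in U_{ad}$ of $\widehat J$; then $y^*:=Su^*\in H^1_0(\Omega)$ is automatic.

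First I would record three elementary properties of $\widehat J$ on $L^2(\Gamma)$. Continuity of $\widehat J$ follows from the continuity of $S$ and of the squared norms. Coercivity is immediate from the regularization term: $\widehat J(u)\ge \tfrac{\alpha}{2}\|u\|_{L^2(\Gamma)}^2 - \tfrac12\|y_d\|_{L^2(\Omega)}^2$ since $\alpha>0$. Strict convexity follows because $u\mapsto \tfrac{\alpha}{2}\|u\|_{L^2(\Gamma)}^2$ is strictly convex and $u\mapsto \tfrac12\|Su-y_d\|_{L^2(\Omega)}^2$ is convex (the composition of an affine map with a convex quadratic). These three properties, together with the fact that $U_{ad}$ is a nonempty, convex, closed (hence weakly closed) subset of the reflexive Hilbert space $L^2(\Gamma)$, are the ingredients needed.

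For existence I would take a minimizing sequence $\{u_n\}\subset U_{ad}$ with $\widehat J(u_n)\to \inf_{U_{ad}}\widehat J$. By coercivity, $\{u_n\}$ is bounded in $L^2(\Gamma)$, so there is a subsequence (not relabelled) and some $u^*\in L^2(\Gamma)$ with $u_n\rightharpoonup u^*$. Weak closedness of $U_{ad}$ gives $u^*\in U_{ad}$. Since $\widehat J$ is convex and (strongly) continuous, it is weakly lower semicontinuous, so
\begin{equation*}
\widehat J(u^*)\le \liminf_{n\to\infty}\widehat J(u_n)=\inf_{u\in U_{ad}}\widehat J(u),
\end{equation*}
which forces equality and shows $u^*$ is a minimizer. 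Uniqueness then follows from strict convexity by the usual argument: if $u_1^*,u_2^*$ were two distinct minimizers, $\tfrac12(u_1^*+u_2^*)\in U_{ad}$ by convexity and $\widehat J\bigl(\tfrac12(u_1^*+u_2^*)\bigr)<\tfrac12\widehat J(u_1^*)+\tfrac12\widehat J(u_2^*)=\inf \widehat J$, a contradiction. Setting $y^*:=Su^*$ finishes the proof.

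No step is genuinely hard here: the only thing one has to be slightly careful about is invoking weak (rather than strong) lower semicontinuity of the tracking term, which is legitimate because $S$ is weakly continuous (as a bounded linear operator) so $Su_n\rightharpoonup Su^*$ in $L^2(\Omega)$ and the $L^2$-norm squared is weakly lsc. The author will likely simply cite the standard existence result for strictly convex, coercive, continuous functionals on closed convex subsets of a reflexive space (see, e.g., \cite{troltzsch2010optimal}) and omit the routine verification.
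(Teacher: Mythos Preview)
Your proposal is correct and follows essentially the same route as the paper: reduce to $\widehat J$, take a minimizing sequence, extract a weak limit in $U_{ad}$, invoke weak lower semicontinuity via convexity plus continuity, and conclude uniqueness from strict convexity. The only cosmetic difference is that the paper obtains boundedness of the minimizing sequence directly from the boundedness of $U_{ad}$ (since $u_a,u_b$ are fixed real numbers, $U_{ad}$ is a bounded closed convex set and hence weakly sequentially compact), whereas you obtain it from the coercivity of $\widehat J$; your version is marginally more general but otherwise the arguments coincide.
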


\begin{proof}
Since the reduced functional $\widehat{J}(u)$ is bounded from below on $L^2(\Gamma)$, there exists the infimum
$$
\inf\limits_{u\in U_{ad}}\widehat{J}(u),
$$
and there is a minimizing sequence $u_n\in U_{ad}, (n=1, 2, \cdots$) such that
$$
\lim\limits_{n\rightarrow\infty}\widehat{J}(u_n)=\inf\limits_{u\in U_{ad}}\widehat{J}(u).
$$

As a reflexive Hilbert space $L^2(\Gamma)$, its bounded, closed and convex subset $U_{ad}$ is weakly sequentially compact. Consequently, $\{u_n\}, (n=1, 2, \cdots$) has a weakly convergent subsequence $\{u_{n_k}\}, (k=1, 2, \cdots$), that is $u_{n_k}\rightharpoonup u^*$ as $k\rightarrow \infty$. Again since $\widehat{J}(u)$ is strictly convex and continuous, it is weakly lower semicontinuous in $L^2(\Gamma)$. Hence, we have
$$
\widehat{J}(u^*)\leq\lim\limits_{k\rightarrow\infty}\widehat{J}(u_{n_k}).
$$

In addition, from the fact that $U_{ad}$ is closed and thus weakly closed, we have $u^*\in U_{ad}$. So we obtain
$$
\widehat{J}(u^*)=\inf\limits_{u\in U_{ad}}\widehat{J}(u),
$$
and know that such a convergence point $u^*$ is a unique global minimum of the reduced functional.
\end{proof}

We will state the Fr\'{e}chet derivative of the reduced functional $\widehat{J}(u)$ in the following lemma.
\begin{lemma}{\label{lm3}}
The Fr\'{e}chet derivative of the reduced functional $\widehat{J}(u)$ at $u\in L^2(\Gamma)$ in the direction $u-u^*\in L^2(\Gamma)$ is given by
\begin{equation}\label{frx}
\widehat{J}'(u)(u-u^*)=\langle \alpha u+p, u-u^*\rangle,
\end{equation}
where $\langle a,b\rangle=\int_\Gamma abd\Gamma$. $p=p(u)\in {H}^{1}_0(\Omega)$ is the solution of the following weak form associated adjoint state
\begin{equation}\label{adj}
(\nabla v, \beta^-\nabla p^-)+(\nabla v, \beta^+\nabla p^+)
=(Su-y_d, v),~~\forall v\in H^1_0(\Omega).
\end{equation}
\end{lemma}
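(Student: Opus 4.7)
The plan is to apply the standard adjoint/Lagrangian technique to the quadratic reduced cost $\widehat{J}$, using linearity of the control-to-state map $S$ and symmetry of the bilinear form $a(\cdot,\cdot)$ to convert the volume integral $(Su-y_d,\,Sh)$ into an interface pairing with the adjoint state $p$. The direction $u-u^*$ in the statement is just a placeholder; it suffices to compute $\widehat{J}'(u)h$ for a generic $h\in L^2(\Gamma)$ and then specialize.

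First I would exploit linearity of $S$: for $h\in L^2(\Gamma)$, $[S(u+th)-Su]/t = Sh$, and by \eqn{con_state} the function $z:=Sh\in H^1_0(\Omega)$ is characterized by $a(z,v)=\langle h,v\rangle$ for every $v\in H^1_0(\Omega)$ (take $f=0$ there). Expanding $\widehat{J}(u+th)$ as a polynomial of degree two in $t$ and letting $t\to 0$ produces the Gateaux derivative
\begin{equation*}
\widehat{J}'(u)h \;=\; (Su-y_d,\,Sh) \;+\; \alpha\,\langle u,h\rangle.
\end{equation*}

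Next I would insert the adjoint: by \eqn{adj}, $p\in H^1_0(\Omega)$ satisfies $a(v,p)=(Su-y_d,v)$ for every $v\in H^1_0(\Omega)$. Choosing $v=z=Sh$ in \eqn{adj} and $v=p$ in the state equation for $z$, and invoking symmetry of $a$, I obtain
\begin{equation*}
(Su-y_d,\,Sh) \;=\; a(z,p) \;=\; a(Sh,p) \;=\; \langle h,p\rangle.
\end{equation*}
Substituting back yields $\widehat{J}'(u)h=\langle \alpha u+p,\,h\rangle$, which specializes to the direction $h=u-u^*$ as stated in \eqn{frx}.

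Finally I would upgrade Gateaux to Fr\'echet differentiability by writing the exact increment
\begin{equation*}
\widehat{J}(u+h)-\widehat{J}(u) \;=\; \langle \alpha u+p,\,h\rangle \;+\; \tfrac12\|Sh\|^2_{L^2(\Omega)} \;+\; \tfrac{\alpha}{2}\|h\|^2_{L^2(\Gamma)},
\end{equation*}
and invoking the continuity of $S:L^2(\Gamma)\to L^2(\Omega)$ (a consequence of Lemma~\ref{lm1}) to bound the remainder by $C\|h\|^2_{L^2(\Gamma)}=o(\|h\|_{L^2(\Gamma)})$.

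The only delicate point is bookkeeping at the interface: one must verify that the pairing $\langle h,p\rangle=\int_\Gamma h\,p\,d\Gamma$ is actually well defined when $h\in L^2(\Gamma)$ and $p\in H^1_0(\Omega)$. This is secured by the trace theorem, which gives $p|_\Gamma\in H^{1/2}(\Gamma)\hookrightarrow L^2(\Gamma)$, so the duality bracket reduces to an honest $L^2(\Gamma)$ inner product and every step above is legitimate. I expect this to be the only substantive check; the remainder of the argument is a direct computation.
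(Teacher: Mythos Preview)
Your argument is correct. The paper arrives at \eqn{frx} via the Lagrangian formalism: it sets $L(y,u,p)=J(y,u)+(f,p)+\langle u,p\rangle-a(y,p)$, writes $\widehat{J}(u)=L(y(u),u,p)$, and differentiates by the chain rule; the term $L'_p$ vanishes because $y(u)$ satisfies the state equation, and the choice of $p$ as the adjoint in \eqn{adj} kills $L'_y$, leaving $L'_u=\langle\alpha u+p,\cdot\rangle$. Your route is more elementary and self-contained: you differentiate $\widehat J$ directly, then use symmetry of $a(\cdot,\cdot)$ together with the state and adjoint equations to convert $(Su-y_d,Sh)$ into $\langle h,p\rangle$. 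The two approaches are equivalent here, but yours has the virtue of explicitly verifying Fr\'echet (not just Gateaux) differentiability via the exact quadratic remainder and the continuity of $S$, and of checking that $p|_\Gamma\in L^2(\Gamma)$ so the interface pairing is well defined---details the paper leaves implicit. The Lagrangian approach, on the other hand, generalizes more readily to nonlinear state equations.
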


$\bf{Remark2:}$ %{\label{rm2}}
The adjoint state of the above corresponding weak formulation is the following interface problem
\begin{eqnarray}{\label{adjstate1}}
&&-\nabla \cdot (\beta(\mathbf{x})\nabla p(\mathbf{x}))=y(\mathbf{x})-y_d(\mathbf{x}), \quad\mbox{ in }\Omega\backslash\Gamma,\\ \label{adjstate2}
&&\left[p(\mathbf{x})\right]_{\Gamma}=0,~~\left[\beta\partial_\mathbf{n} p(\mathbf{x})\right]_{\Gamma}=0,\\ \label{adjstate3}
&&p(\mathbf{x})=0, \quad\mbox{ on }\partial \Omega.
\end{eqnarray}

Corresponding to the solution operator $S$ of the state equation, we define the adjoint operator $A^*:p=A^*(y-y_d)$ with right hand side $y-y_d$. It is well known\cite{babuvska1970finite,chen1998finite} that
\begin{equation}\label{adj-regularity}
\|p\|_{\widetilde{H}^2(\Omega)\cap H_0^1(\Omega)}\leq C(\|y\|_{L^2(\Omega)}+\|y_d\|_{L^2(\Omega)}).
\end{equation}

\begin{proof}
We construct the Lagrangian functional $L(y, u, p)=J(y, u)+(f, p)+\langle u, p \rangle-a(y, p)$, then with $y=y(u)=Su$, we have
$\widehat{J}(u)=J(y(u), u)=L(y(u), u, p)$. For any $u\in L^2(\Gamma)$, it is shown that
\begin{eqnarray}{\label{lag}}
\widehat{J}'(u)(u-u^*)=L'_y(y(u),u, p)(y'_u(u)(u-u^*))+L'_u(y(u),u,p)(u-u^*) \nonumber \\
+L'_p(y(u),u,p)(p'_u(u-u^*)).
\end{eqnarray}

Since by construction $L'_p(y(u),u,p)(\cdot)=0$ and we choose $p=p(u)$ such that $L'_y(y(u),u, p)=0$, this is nothing else but the adjoint state (\ref{adj}), then we have
$$
\widehat{J}'(u)(u-u^*)=L'_u(y(u),u,p)(u-u^*)=\langle \alpha u+ p, u-u^*\rangle, ~~\forall u\in L^2(\Gamma).
$$
\end{proof}

\subsection{Optimality system}
We make the following smoothness assumption on the data of the problem, that is, $y_d\in C^{0,\lambda}(\overline{\Omega})\subset L^2(\Omega)$ with some $\lambda\in (0,1)$, and control bounds $u_a<u_b$ are fixed real numbers. As a consequence of the above lemmas, by using standard techniques \cite{pinnau2008optimization,troltzsch2010optimal}, we have the existence of solutions and the optimality conditions.

\begin{theorem}{\label{th}}
Let $(y^*, u^*)\in (\widetilde{H}^{2}(\Omega)\cap H_0^1(\Omega))\times U_{ad}$ be the solution of interface control problem ($\mathbf{P}$) with PDE-constraints, then there exists an adjoint state $p^*\in H^1_0(\Omega)$ such that the triplet $(y^*, u^*, p^*)$ satisfies the following optimality system
\begin{eqnarray}{\label{opts}}
& a(y^*,v)=(f,v)+\langle u^*, v\rangle, & ~~\forall v\in {H}^{1}_0(\Omega),\\ \label{opts2}
& a(v,p^*)=(Su^*-y_d, v), & ~~\forall   v\in {H}^{1}_0(\Omega), \\ \label{opts3}
& \langle \alpha u^*+ p^*, u-u^*\rangle\geq 0, & \forall u\in U_{ad}.
\end{eqnarray}

Moreover, the variational inequality is equivalent to the following projection equation
\begin{equation}\label{proj_equa}
u^*=\mathcal{P}_{[u_a, u_b]}\left(-\frac{1}{\alpha}p^*|_\Gamma\right),
\end{equation}
here, $\mathcal{P}_{[u_a, u_b]}(u)=min\{u_b, max\{u_a, u \}\}$ denotes the pointwise projection onto $[u_a, u_b]$.
\end{theorem}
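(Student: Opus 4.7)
The plan is to treat (2.12)--(2.15) as the first-order necessary optimality conditions of the convex problem already solved in Lemma \ref{lm2}, and then convert the variational inequality into the pointwise projection form. The state equation (2.12) is immediate, since it is simply the weak formulation (\ref{con_state}) evaluated at the optimum $y^* = Su^*$. For the adjoint equation (2.13), I would define $p^* \in H_0^1(\Omega)$ as the unique weak solution of the adjoint interface problem (\ref{adjstate1})--(\ref{adjstate3}) with source $y^* - y_d \in L^2(\Omega)$; existence, uniqueness, and the regularity (\ref{adj-regularity}) follow from Lax--Milgram just as in Lemma \ref{lm1}, now with homogeneous jump conditions.

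For the variational inequality (2.14), I would use convexity of both $U_{ad}$ and $\widehat{J}$. For any $u \in U_{ad}$ and $t \in (0,1]$, convexity places $u^* + t(u - u^*)$ in $U_{ad}$, and global minimality from Lemma \ref{lm2} gives
$$\widehat{J}(u^* + t(u - u^*)) - \widehat{J}(u^*) \geq 0.$$
Dividing by $t$, sending $t \to 0^+$, and invoking the Fr\'echet derivative formula from Lemma \ref{lm3} with $p^* = p(u^*)$ produces (2.14). The duality pairing on $\Gamma$ is well defined because (\ref{adj-regularity}) together with the trace theorem applied on each subdomain yields $p^*|_\Gamma \in L^2(\Gamma)$.

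Finally, to derive the projection equation (2.15) from (2.14), I would perform a standard localization on $\Gamma$. Perturbing $u^*$ on arbitrary measurable subsets of $\Gamma$ while keeping values in $[u_a,u_b]$ (these perturbations remain in $U_{ad}$) converts the integrated inequality into the pointwise a.e.\ statement $(\alpha u^*(x) + p^*(x))(v - u^*(x)) \geq 0$ for every $v \in [u_a,u_b]$. Splitting into the three cases $u^*(x) = u_a$, $u^*(x) = u_b$, and $u_a < u^*(x) < u_b$ then gives $u^*(x) = \mathcal{P}_{[u_a,u_b]}(-p^*(x)/\alpha)$.

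The step I expect to be the main obstacle is the localization argument, since it lives on the curve $\Gamma$ rather than on an open planar set: one must appeal to Lebesgue differentiation with respect to the one-dimensional surface measure on $\Gamma$, which is available because $\Gamma \in C^2$, but must be cited rather than treated as the usual Euclidean case. Everything else is a direct combination of convex optimization arguments with the preceding lemmas.
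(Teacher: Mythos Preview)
Your proposal is correct and follows essentially the same route as the paper: the state and adjoint equations are taken by definition, and the variational inequality is obtained from the difference quotient $\frac{1}{t}\bigl(\widehat{J}(u^*+t(u-u^*))-\widehat{J}(u^*)\bigr)\geq 0$ combined with Lemma~\ref{lm3}. In fact you go slightly further than the paper, which simply asserts the equivalence with the projection formula~(\ref{proj_equa}) without proof; your localization sketch is the standard justification (see, e.g., Tr{\"o}ltzsch~\cite{troltzsch2010optimal}), and your concern about Lebesgue differentiation on the $C^2$ curve $\Gamma$ is legitimate but routine, so this is not an obstacle.
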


\begin{proof}
Let $(y^*, u^*)\in (\widetilde{H}^{2}(\Omega)\cap H_0^1(\Omega))\times U_{ad}$ be a solution of the interface optimal control problem ($\mathbf{P}$), then by the definition of $y^*$ and $p^*$, it is obvious that (\ref{opts}) and (\ref{opts2}) hold.
 Since $u^*\in L^2(\Gamma)$ is a solution of the reduced functional $\widehat{J}(u)$, then for any $\varepsilon>0$, we have
$$
\frac{1}{\varepsilon}(\widehat{J}(u^*+\varepsilon (u-u^*))-\widehat{J}(u^*))\geq 0, ~~\forall u\in U_{ad}.
$$

Then letting $\varepsilon\rightarrow 0$ yields $\widehat{J}'(u^*)(u-u^*)\geq 0$, and using Lemma~\ref{lm3}, the variational inequality (\ref{opts3}) holds.
\end{proof}

Using the above Lemma~\ref{lm1}, we have the following regularity result.
\begin{theorem}
Let $\left(y^*,u^*,p^*\right)$ be the solution of the problem $(\mathbf{P})$. Then we have
$$(y^*,u^*,p^*)\in (\widetilde{H}^{2}(\Omega)\cap {H}^{1}_0(\Omega))\times U_{ad}\times (\widetilde{H}^{2}(\Omega)\cap {H}^{1}_0(\Omega)).$$
\end{theorem}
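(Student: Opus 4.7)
The plan is a bootstrap argument that starts from the membership $u^{*}\in U_{ad}$ established in Theorem~\ref{th} and uses the projection formula \eqref{proj_equa} together with the regularity lemma and the adjoint a priori bound \eqref{adj-regularity} to successively upgrade the regularity of $p^{*}$, then of $u^{*}$, and finally of $y^{*}$.

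First I would note that, since $u^{*}\in U_{ad}\subset L^{\infty}(\Gamma)\subset L^{2}(\Gamma)$, the variational problem \eqref{opts} has a unique solution $y^{*}\in H_{0}^{1}(\Omega)$ by Lax--Milgram, and in particular $y^{*}-y_{d}\in L^{2}(\Omega)$ thanks to the standing assumption $y_{d}\in C^{0,\lambda}(\overline{\Omega})\subset L^{2}(\Omega)$. The adjoint system \eqref{adjstate1}--\eqref{adjstate3} has an $L^{2}$ source and \emph{homogeneous} Dirichlet and Neumann jump conditions across $\Gamma$; hence the standard elliptic-interface regularity result recalled in \eqref{adj-regularity} (this is precisely the case of Lemma~\ref{lm1} with vanishing flux jump) yields
\begin{equation*}
p^{*}\in \widetilde{H}^{2}(\Omega)\cap H_{0}^{1}(\Omega),\qquad \|p^{*}\|_{\widetilde{H}^{2}(\Omega)}\leq C\bigl(\|y^{*}\|_{L^{2}(\Omega)}+\|y_{d}\|_{L^{2}(\Omega)}\bigr).
\end{equation*}

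Next I would transfer this interior regularity to the interface $\Gamma$. Since $p^{*}|_{\Omega^{\pm}}\in H^{2}(\Omega^{\pm})$ and $\Gamma\in C^{2}$, the trace theorem gives $p^{*}|_{\Gamma}\in H^{3/2}(\Gamma)\hookrightarrow H^{1}(\Gamma)$. Now I invoke the projection identity \eqref{proj_equa},
\begin{equation*}
u^{*}=\mathcal{P}_{[u_{a},u_{b}]}\!\left(-\tfrac{1}{\alpha}\,p^{*}|_{\Gamma}\right).
\end{equation*}
Because $\mathcal{P}_{[u_{a},u_{b}]}$ is a globally $1$-Lipschitz truncation, Stampacchia's theorem guarantees that it maps $H^{1}(\Gamma)$ into itself (with the chain rule $\nabla \mathcal{P}(v)=\nabla v\cdot\chi_{\{u_{a}<v<u_{b}\}}$). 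Therefore $u^{*}\in H^{1}(\Gamma)\hookrightarrow H^{1/2}(\Gamma)$.

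With $u^{*}\in H^{1/2}(\Gamma)$ in hand, the state equation \eqref{opts} now falls exactly into the hypotheses of Lemma~\ref{lm1}, and I obtain $y^{*}\in \widetilde{H}^{2}(\Omega)\cap H_{0}^{1}(\Omega)$ together with the quantitative bound \eqref{prio}. Combined with the $p^{*}$ result from the first step, this establishes the claimed regularity triple. The one nontrivial step is the second one: showing that the pointwise projection of the trace of $p^{*}$ actually lives in a Sobolev space of positive order on $\Gamma$. I expect this to be the only real obstacle, and it is resolved cleanly by the Stampacchia truncation lemma; trying to push further to, say, $H^{3/2}(\Gamma)$ regularity of $u^{*}$ would fail in general because the projection introduces kinks at the active sets, but fortunately $H^{1/2}(\Gamma)$ is all that Lemma~\ref{lm1} requires.
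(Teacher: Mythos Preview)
Your bootstrap argument is correct and follows essentially the same route as the paper: start from $u^{*}\in L^{2}(\Gamma)$ to get $y^{*}\in H^{1}_{0}(\Omega)$, then use elliptic-interface regularity for the adjoint (with homogeneous jumps) to obtain $p^{*}\in\widetilde{H}^{2}(\Omega)\cap H^{1}_{0}(\Omega)$, feed the trace of $p^{*}$ through the projection formula \eqref{proj_equa} to upgrade $u^{*}$, and finally invoke Lemma~\ref{lm1} for $y^{*}$. Your proof is in fact more explicit than the paper's at the one delicate point, namely the Stampacchia-type justification that $\mathcal{P}_{[u_{a},u_{b}]}$ preserves $H^{1}(\Gamma)$; the paper simply asserts $u^{*}\in H^{1}(\Gamma)$ ``in view of'' \eqref{proj_equa} and the trace theorem.
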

\begin{proof}
Since $f\in L^2(\Omega)$, and $u^*\in L^2(\Gamma)$, we have $y^*\in H^1_0(\Omega)$. From $y^*-y_d \in L^2(\Omega)$ and elliptic regularity, it implies that $p^*\in \widetilde{H}^{2}(\Omega)\cap {H}^{1}_0(\Omega)$, using the trace theorem with this together, and in view of equation (\ref{proj_equa}), we obtain $u^*\in H^1(\Gamma)$, which in turns gives $y^*\in \widetilde{H}^{2}(\Omega)\cap {H}^{1}_0(\Omega)$.
\end{proof}

\section{Numerical approximation}\label{num}
Since the analytic solution to problem $(\mathbf{P})$ is rarely available, we seek numerical solutions. In order to get an accurate solution  without using a body fitted mesh to resolve the interface problem, we use the immersed finite element method (IFEM) which is described in the next subsection.

\subsection{The immersed finite element method}\label{section3_1}
The solution of interface problem (\ref{state1})-(\ref{state3}) belongs to $\widetilde{H}^2(\Omega)$ because of the discontinuity in the coefficient\cite{babuvska1970finite,bramble1996finite,huang2002some}, see Lemma~\ref{lm1}.
The standard finite element methods can not achieve the optimal convergence unless
the mesh fits with the interface. The immersed finite element can achieve optimal convergence using a  uniform mesh.

Let $\mathcal{T}_h$ be a uniform triangulation of $\Omega$ with mesh size $h$. If $h$ is sufficiently small, then it is reasonable to assume the following:
\begin{itemize}
  \item  The interface $\Gamma$ will not intersect an edge of any element at more than two points unless the edge is part of $\Gamma$;
  \item If $\Gamma$ intersects the boundary of an element at two points, then these two points must be on different edges of this element.
\end{itemize}

As usual, the triangulation $\mathcal{T}_h$  defines in a natural way a segmentation of interface $\Gamma$. The interface $\Gamma$ is approximated by $\Gamma_h$, the union of $\Gamma_h^i$, the line segments connecting the intersections of the interface and the edges of elements.

We call an element $T$ an interface element if $\Gamma_h$ passes through the interior of the element $T$; otherwise we call $T$ a non-interface element. Note that
if $\Gamma$ intersects $\partial T$ at two vertices of $T$, then the element is a non-interface element.

The sets of all interface elements and non-interface elements are denoted by $\mathcal{T}_h^{int}$ and $\mathcal{T}_h^{non}$, respectively.

\begin{figure}[!htp]
\begin{center}
       \includegraphics[width=0.3\textwidth,clip]{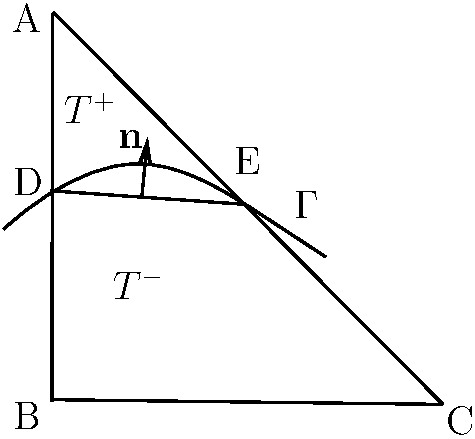}
  \caption{~A typical interface element and a neighboring element.   }\label{pic2}
 \end{center}
\end{figure}

The idea of the IFEM is to locally modify the standard linear FE basis functions in interface elements to take into account the interface jump conditions (\ref{state2}). For simplicity, we consider a typical interface element $\triangle ABC$ whose geometric configuration is given in Figure~\ref{pic2} as a demonstration. The line segment $\overline{DE}$ divides $T$ into two parts $T^+$ and $T^-$ and $\mathbf{n}$ is the unit normal direction of $\overline{DE}$.  We construct the following piecewise linear function
\begin{equation} \label{ifembase}
  \phi(x)=\left\{ \begin{array}{ll}
  \phi^+=a^++b^+x_1+c^+x_2,\qquad &\mathbf{x}=(x_1,x_2)\in T^+,\\
 \phi^-=a^-+b^-x_1+c^-x_2,\qquad &\mathbf{x}=(x_1,x_2)\in T^-,\\
\end{array}  \right.
\end{equation}
where the coefficients are chosen such that
\begin{equation}\label{ifembase_vi}
\begin{array}{lll}
\phi(A)=V_1,\quad & \phi(B)=V_2,\quad & \phi(C)=V_3,\\
\phi^+(D)=\phi^-(D),~ & \phi^+(E)=\phi^-(E),~& \beta^+\partial_\mathbf{n}\phi^+=\beta^-\partial_\mathbf{n}\phi^-,\\ %\label{ifembase_jump}
\end{array}
\end{equation}
where $V_i$, $i=1,2,3$ are the nodal variables. Notice that there are six parameters in (\ref{ifembase}) and six constraints in (\ref{ifembase_vi}). The piecewise linear function is uniquely determined by $V_i$, $i=1,2,3$, see \cite{li2003new}.  We note that a function is likely discontinuous across the common edges of two adjacent interface elements, see \cite{li2003new} for the detailed description of the phenomenon.

\begin{figure}[!htp]
\begin{center}
       \includegraphics[width=0.8\textwidth,clip]{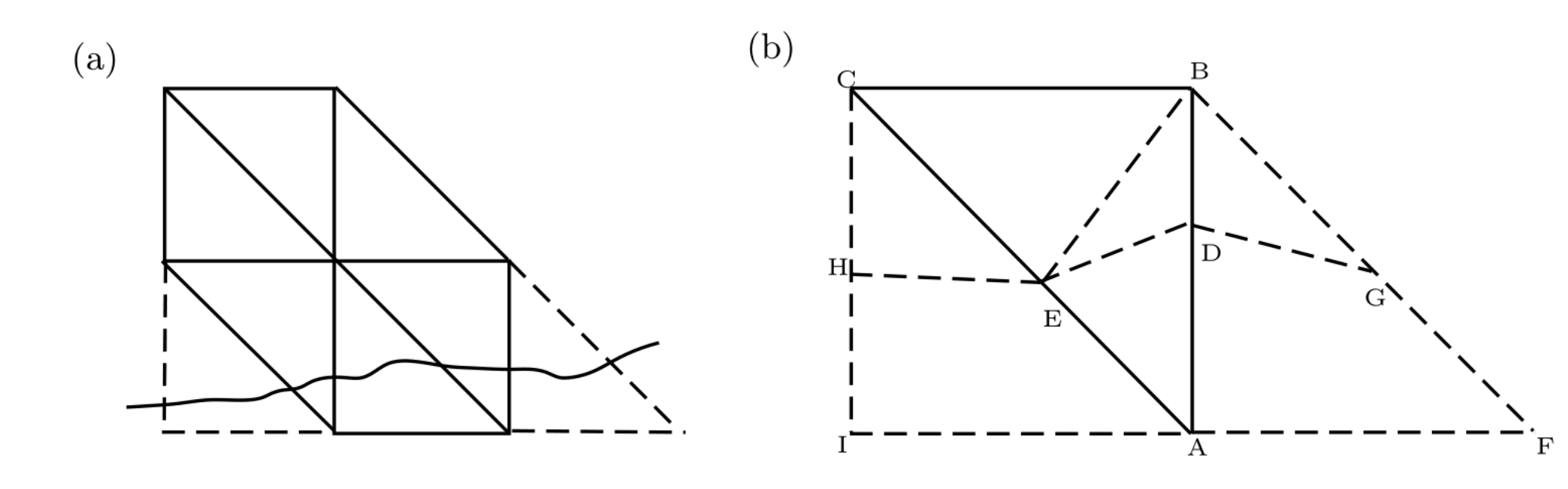}
 % \caption{ A typical interface element and a neighboring element.   }\label{pic2}
 \caption{~$(a)\,$ The support of a local basis function.
 $(b)\,$ A diagram for the construction of a local basis function on $\triangle ABC $.}
 \label{conform1}
 \end{center}
\end{figure}

A conforming element was proposed in \cite{li2003new}. The idea of the IFEM is to make sure that some of the local basis functions in two adjacent interface elements can take the same value at the interface point on their common edge. For the non-interface element, basis function $\phi_i$ is the usual conforming linear finite element basis function; for the interface element, to construct a local basis function that is globally continuous, we extend the previously defined basis function at the same node to one more triangle along the interface (cf. Figure 3). We assign values of a local basis function $\phi_i$ at the vertices $A$, $B$, $C$, $F$ and $I$, and this construction consists of the following five steps.

S1. The three non-conforming IFE functions defined as above on the elements $\triangle ABC$, $\triangle AFB$ and $\triangle ACI$ are formed by using the values at the vertices $A$, $B$, $C$, $F$ and $I$, respectively.

S2. The value at $D$ is taken as the average of the values at $D$ of the nonconforming piecewise linear basis functions defined on $\triangle ABC$ and $\triangle AFB$ constructed in S1.

S3. Similarly, the value at $E$ is taken as the average of values at $E$ of the nonconforming piecewise linear basis functions defined on the elements $\triangle ABC$ and $\triangle ACI$ formed in S1.

S4. Using an auxiliary line, say, line segment $\overline{BE}$ or $\overline{DC}$ to partition the element $\triangle ABC$ into three subtriangles. The auxiliary line is chosen in such a way that at least one of angles(or complimentary angles if the angle is more than $\pi /4$) is bigger than or equal to $\pi /2$.

S5. The piecewise linear basis functions in the three subtriangles are determined by the values at the points $A$, $B$, $C$, $D$ and $E$.

It has been proved in \cite{gong2008immersed} that the IFE space $H_h(\Omega)$ has an optimal approximation capability, where $H_h(\Omega)$ is the span of the basic functions $\{\phi_i\}_{i=1}^N$, where $\phi_i\in C(\overline{\Omega})\cap H^1_0(\Omega)$. Each function in $H_h(\Omega)$ satisfies the approximate homogeneous jump conditions. In other words, $H_h(\Omega)\subset H_0^1(\Omega)$. For a small positive constant $\rho>0$, we denote the neighborhood of $\Gamma$ by
$$N\left(\Gamma,\rho\right)=\left\{x\in\Omega:\textup{dist}\left(x,\Gamma\right)\leq\rho\right\},$$
where $dist\left(x,\Gamma\right)$ is the distance from $x$ to $\Gamma$. We define $\Omega_\Gamma=\bigcup\limits_{T\in\mathcal{T}^{int}_h}T.$ And note that $\Omega_\Gamma\subset N\left(\Gamma,h\right),$ we assume that the mesh size $h<\rho$ so that $\Omega_\Gamma\subset N\left(\Gamma,\rho\right).$ Since the partition $\mathcal{T}_h$ is regular and quasi-uniform, it is true that $|\Omega_\Gamma|\leq Ch.$ Then, we define $u^e(x)=u^e(x^*+\delta \mathbf{n})=u(x^*),~ \forall x\in N\left(\Gamma,\rho\right)$ to be the extensions of $u$, where $x^*$ is the projection of $x$ on the interface $\Gamma$ and $|\delta|\leq \rho.$ Due to the interface $\Gamma$ is smooth and $\rho$ is a small positive constant, for any $x\in N\left(\Gamma,\rho\right)$, there exits a unique orthogonal projection $x^*\in \Gamma$, namely, $\|x-x^*\|_2=\textup{dist}\left(x,\Gamma\right)$. We define extension function $y_{\sigma u}: \Omega\rightarrow R$ by
\begin{equation*}
  y_{\sigma u}=\left\{ \begin{aligned}
  &H\left(\varphi\left(x\right)\right)I_h\tilde{y}_u\left(x\right)-I_h\left(H\left(\varphi\left(x\right)\right)\tilde{y}_u\left(x\right)\right),\quad &&\textup{if}\ ~ x\in\Omega_\Gamma,\\
 &0,\quad &&\textup{otherwise},\\
\end{aligned}  \right.
\end{equation*}
where $\displaystyle{\tilde{y}_u\left(x\right)=\frac{u^e(x)}{\beta^-}d\left(x\right),}$
\begin{equation*}
 d\left(x\right)=\left\{ \begin{aligned}
  &-\textup{dist}\left(x,\Gamma\right),\quad && x\in\Omega^-\cap N\left(\Gamma,\rho\right),\\
 &0,\quad && x\in\Gamma,\\
 &\textup{dist}\left(x,\Gamma\right),\quad && x\in\Omega^+\cap N\left(\Gamma,\rho\right),
\end{aligned}  \right.
 \qquad H\left(s\right)=\left\{ \begin{aligned}
  &0,\quad && s\geq0,\\
 &1,\quad && s<0,
\end{aligned}  \right.
\end{equation*}
$I_h$ is the linear interpolation operator and $\varphi(x)$ is a typical level set function that represents the interface, see \cite{gong2008immersed, ji2018high} for details. For convenience, we define $\widetilde{H}_h(\Omega)$ as following:
$$
\widetilde{H}_h(\Omega)=\{ \psi_h\mid \psi_h=q_h+y_{\sigma u}, ~ q_h\in H_h(\Omega) ~ \mbox{and} ~ \left[\beta\partial_ny_{\sigma u}\right]_{\Gamma}=u\}.
$$
\begin{lemma}\label{lm-sm}
For any $u\in L^2(\Gamma)$, find $y_h(u)=q_h\left(u\right)+y_{\sigma u}\in \widetilde{H}_h(\Omega)$ such that
\begin{equation}\label{IFEM}
a_h(q_h(u),v_h)=( f,v_h)+\langle u, v_h\rangle-\sum_{T\in\mathcal{T}_h^{int}}\sum_{s=\pm}\int_{T\cap\Omega^{s}}\beta\nabla y_{\sigma u}\nabla v_h dx, \quad\forall v_h\in H_h(\Omega),
\end{equation}
where
$$a_h(q_h(u),v_h):=\sum_{T\in\mathcal{T}_h}\int_{T}\beta\nabla q_h(u)\cdot \nabla v_h dx, \quad \forall v_h  \in H_h(\Omega).$$
\end{lemma}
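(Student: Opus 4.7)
The statement to establish is the well-posedness of the discrete problem, i.e., existence and uniqueness of $q_h(u) \in H_h(\Omega)$ (and hence of $y_h(u) = q_h(u) + y_{\sigma u} \in \widetilde{H}_h(\Omega)$) solving the given variational equation for every $u \in L^2(\Gamma)$. Since $H_h(\Omega)$ is finite-dimensional, the plan is to invoke the Lax--Milgram lemma on the Hilbert space $H_h(\Omega)$ endowed with the $H^1_0(\Omega)$-norm, which is legitimate because the preceding text guarantees $H_h(\Omega) \subset H^1_0(\Omega)$.

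First I would verify that $a_h(\cdot,\cdot)$ is a continuous and coercive bilinear form on $H_h(\Omega)$. Continuity is immediate from Cauchy--Schwarz together with the uniform boundedness of $\beta$, since $a_h(w_h,v_h) = \sum_{T\in\mathcal{T}_h}\int_T \beta \nabla w_h \cdot \nabla v_h\,dx \leq \max\{\beta^+,\beta^-\}\|\nabla w_h\|_{L^2(\Omega)}\|\nabla v_h\|_{L^2(\Omega)}$, and the piecewise sum equals the integral over $\Omega$ because functions in $H_h(\Omega)$ are globally continuous and $H^1$-conforming. Coercivity follows the same way from the lower bound $\beta \geq \beta_{\min}>0$ combined with the Poincar\'e inequality on $H^1_0(\Omega)$.

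Next I would show that the right-hand side defines a bounded linear functional on $H_h(\Omega)$. The term $(f,v_h)$ is controlled by $\|f\|_{L^2(\Omega)}\|v_h\|_{L^2(\Omega)}$; the interface term $\langle u, v_h\rangle = \int_\Gamma u v_h\,d\Gamma$ is bounded by $\|u\|_{L^2(\Gamma)}\|v_h\|_{L^2(\Gamma)}$, and the latter is in turn controlled by $\|v_h\|_{H^1_0(\Omega)}$ via the trace theorem. The least standard piece is the correction term involving $y_{\sigma u}$: here I would use the explicit definition of $y_{\sigma u}$ via $\tilde{y}_u = u^e\, d(x)/\beta^-$ together with the fact that $|d(x)| \leq Ch$ on $\Omega_\Gamma$ and $|\Omega_\Gamma| \leq C h$, so that a direct estimate combined with standard properties of the interpolant $I_h$ and of the Heaviside composition yields
\begin{equation*}
\Bigl|\sum_{T\in\mathcal{T}_h^{int}}\sum_{s=\pm}\int_{T\cap\Omega^{s}}\beta\nabla y_{\sigma u}\cdot \nabla v_h\,dx\Bigr| \leq C\,\|u\|_{L^2(\Gamma)}\,\|\nabla v_h\|_{L^2(\Omega)},
\end{equation*}
with a constant depending on $\beta$ and the mesh regularity but independent of $h$ (as already exploited in \cite{gong2008immersed}).

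With continuity, coercivity, and boundedness of the right-hand side in hand, the Lax--Milgram theorem delivers a unique $q_h(u) \in H_h(\Omega)$ solving the equation, and we then set $y_h(u) := q_h(u) + y_{\sigma u}$, which lies in $\widetilde{H}_h(\Omega)$ by construction since $y_{\sigma u}$ is designed to carry the nonhomogeneous flux jump $u$. The main obstacle I anticipate is not the abstract Lax--Milgram step, which is routine, but rather cleanly bounding the correction integral and justifying that $y_{\sigma u}$ is well-defined and belongs to a space on which $\nabla y_{\sigma u}$ can be paired against $\nabla v_h$ on interface elements for every $u \in L^2(\Gamma)$; this essentially relies on the extension $u^e$ and the smoothness of $\Gamma$ (through $C^2$) so that the projection $x \mapsto x^*$ is well-defined on the $\rho$-neighborhood of $\Gamma$, as discussed in the excerpt.
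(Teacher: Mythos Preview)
Your proposal is correct and goes well beyond what the paper actually does: the paper does not supply a proof of this lemma at all, it simply records the discrete variational formulation and then remarks in the following sentence that ``the above semi-discrete scheme is well defined and admits a unique solution $y_h(u)=q_h(u)+y_{\sigma u}$, $q_h(u)\in H_h(\Omega)$.'' Your Lax--Milgram argument is precisely the standard justification the authors are tacitly invoking, and your treatment of the correction term involving $y_{\sigma u}$ is consistent with the estimates in \cite{gong2008immersed} that the paper cites throughout.
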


Note that the above semi-discrete scheme is well defined and admits a unique solution $y_h(u)=q_h\left(u\right)+y_{\sigma u},\  q_h\left(u\right)\in H_h(\Omega)$, which we denote by $y_h(u)=S_h(u)$. Since $y(u)=q+y_{\sigma u}$, $y_h(u)=q_h\left(u\right)+y_{\sigma u}$, and note that $\|y-y_h(u)\|_{H^1(\Omega)}\leq \|q-q_h\left(u\right)\|_{H^1(\Omega)}$, $\|y-y_h(u))\|_{L^2(\Omega)}\leq \|q-q_h\left(u\right)\|_{L^2(\Omega)}$, the proof of the following Theorem is standard and can be proved by dual argument\cite{brenner2008mathematical,babuvska1979direct}, see\cite{gong2008immersed, ji2018high}.
\begin{theorem}\label{theo_IFEM}
Let $y\in (\widetilde{H}^{2}(\Omega)\cap {H}^{1}_0(\Omega))$ and $y_h(u)=q_h\left(u\right)+y_{\sigma u}\in \widetilde{H}_h(\Omega)$ be the solutions of (\ref{state1})-(\ref{state3}) and (\ref{IFEM}) respectively. Then there exists a constant $C>0$ such that
\begin{eqnarray}\label{h1}
&\|y-y_h(u)\|_{H^1(\Omega)}\leq Ch(\|f\|_{L^2(\Omega)}+\|u\|_{H^{1/2}(\Gamma)}+\|\tilde{y}_u\|_{H^3\left(N\left(\Gamma,\rho\right)\right)}),\\   \label{l2}
&\|y-y_h(u)\|_{L^2(\Omega)}\leq Ch^2(\|f\|_{L^2(\Omega)}+\|u\|_{H^{1/2}(\Gamma)}+\|\tilde{y}_u\|_{H^3\left(N\left(\Gamma,\rho\right)\right)}).
\end{eqnarray}
\end{theorem}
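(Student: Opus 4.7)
The plan is to exploit the splitting $y = q + y_{\sigma u}$ and $y_h(u) = q_h(u) + y_{\sigma u}$. Since the non-smooth part $y_{\sigma u}$ is identical in the two expansions, the full error reduces to $y - y_h(u) = q - q_h(u)$, and in particular the crude bounds $\|y - y_h(u)\|_{H^1} \le \|q - q_h(u)\|_{H^1}$ and similarly in $L^2$, already recorded just before the theorem, are what I would take as the starting point. The advantage is that $q$ satisfies the transmission problem (\ref{state1})-(\ref{state3}) with \emph{homogeneous} Neumann jump on $\Gamma$ but with a modified load, so Lemma~\ref{lm1} (applied to the modified data, including the $y_{\sigma u}$ contribution) yields $q \in \widetilde{H}^2(\Omega)\cap H_0^1(\Omega)$ with
\begin{equation*}
\|q\|_{\widetilde{H}^2(\Omega)} \le C\bigl(\|f\|_{L^2(\Omega)} + \|u\|_{H^{1/2}(\Gamma)} + \|\tilde y_u\|_{H^3(N(\Gamma,\rho))}\bigr).
\end{equation*}
The last term is what absorbs the regularity cost of cutting off $y_{\sigma u}$ on the interface strip $\Omega_\Gamma$.

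For the $H^1$ estimate I would proceed by a Céa-type argument adapted to IFEM. Because $q$ and $q_h(u)$ both have homogeneous jump conditions, Galerkin orthogonality holds in $a_h(\cdot,\cdot)$ up to the standard IFEM consistency error concentrated on the interface band $\Omega_\Gamma$, whose area is $O(h)$. Coercivity of $a_h$ on $H_h(\Omega)$ then gives
\begin{equation*}
\|q - q_h(u)\|_{H^1(\Omega)} \le C\bigl(\inf_{v_h \in H_h(\Omega)} \|q - v_h\|_{H^1(\Omega)} + \text{consistency term}\bigr),
\end{equation*}
and both pieces are $O(h)\|q\|_{\widetilde{H}^2(\Omega)}$ by the optimal approximation capability of $H_h(\Omega)$ proved in \cite{li2004immersed,gong2008immersed}. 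Combining with the regularity bound above gives (\ref{h1}).

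For the $L^2$ estimate I would run the Aubin--Nitsche duality argument. Introduce $w \in \widetilde{H}^2(\Omega)\cap H_0^1(\Omega)$ solving the adjoint transmission problem with right-hand side $q - q_h(u)$ and homogeneous jump conditions, for which Lemma~\ref{lm1} yields $\|w\|_{\widetilde{H}^2(\Omega)} \le C\|q - q_h(u)\|_{L^2(\Omega)}$. Testing with $q - q_h(u)$, subtracting an IFEM interpolant $I_h w \in H_h(\Omega)$ using the approximate Galerkin orthogonality, and bounding $\|w - I_h w\|_{H^1(\Omega)} \le C h \|w\|_{\widetilde{H}^2(\Omega)}$ via the IFE approximation property, one picks up one extra factor of $h$ on top of the $H^1$ estimate, producing (\ref{l2}).

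The principal technical obstacle is handling the mismatch between the continuous and discrete problems on the interface elements: the IFE basis functions are not globally conforming in the classical sense, so Galerkin orthogonality only holds modulo a consistency defect supported on $\Omega_\Gamma$, and the extension term $\sum_{T\in\mathcal{T}_h^{int}}\sum_s\int_{T\cap\Omega^s}\beta\nabla y_{\sigma u}\cdot\nabla v_h$ on the right-hand side of (\ref{IFEM}) has to be estimated uniformly in $h$. Both of these are controlled precisely by the $H^3(N(\Gamma,\rho))$-norm of $\tilde y_u$ together with the $O(h)$ measure of $\Omega_\Gamma$, and the technical machinery is already assembled in \cite{gong2008immersed,ji2018high}; my proof would essentially cite these estimates once the reduction to $q - q_h(u)$ and the regularity bound for $q$ are in place.
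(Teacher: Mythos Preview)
Your proposal is correct and matches the paper's approach essentially exactly: the paper does not give a detailed proof of this theorem but simply records the reduction $y-y_h(u)=q-q_h(u)$ via the common splitting, declares the result ``standard'' by the dual (Aubin--Nitsche) argument, and refers to \cite{gong2008immersed,ji2018high} for the technical estimates---precisely the route you outline. One minor remark: in this paper the IFE space $H_h(\Omega)$ is the \emph{conforming} version from \cite{li2003new}, with $H_h(\Omega)\subset H_0^1(\Omega)$, so the ``nonconforming consistency defect'' you flag is really only the geometric mismatch between $\Gamma$ and its piecewise-linear approximation $\Gamma_h$ (the basis functions satisfy the flux jump on $\Gamma_h$, not $\Gamma$); this is indeed handled in the cited references and does not change your argument.
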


\subsection{Discretization of the optimal control problem}\label{section3_2}
We will state the discrete version of the state equation. For each $u\in L^2(\Gamma)$, find the unique element $y_h(u)=S_hu=q_h\left(u\right)+y_{\sigma u}\in \widetilde{H}_h(\Omega)$ satisfying
\begin{equation}\label{semeq}
a_h(q_h(u),v_h)=( f,v_h)+\langle u, v_h\rangle-\sum_{T\in\mathcal{T}_h^{int}}\sum_{s=\pm}\int_{T\cap\Omega^{s}}\beta\nabla y_{\sigma u}\nabla v_h dx, \quad\forall v_h\in H_h(\Omega).
\end{equation}

We can define the immersed finite element semidiscrete approximation of the optimal control problem with the variational discretization concept~\cite{hinze2005variational} in the following
\par
$\mathbf{Problem:}$ $(\mathbf{\tilde{P}_h})$ %\label{sem-p1}
Consider the problem of minimizing in $U_{ad}$
\begin{equation}\label{sem-dis_J}
J_h(y_h(u), u)=\frac{1}{2}\int_\Omega(y_h(u)-y_d)^2d\mathbf{x}+\frac{\alpha}{2}\int_\Gamma u^2d\Gamma,
\end{equation}
over all $y_h\left(u\right)=q_h\left(u\right)+y_{\sigma u},\ (q_h(u),u)\in H_h(\Omega)\times U_{ad}$ subject to
\begin{equation}\label{dis_state_equation}
a_h(q_h(u),v_h)=( f,v_h)+\langle u, v_h\rangle-\sum_{T\in\mathcal{T}_h^{int}}\sum_{s=\pm}\int_{T\cap\Omega^{s}}\beta\nabla y_{\sigma u}\nabla v_h dx, \quad\forall v_h\in H_h(\Omega),
\end{equation}
where $S_h:y_h(u)=S_hu=q_h\left(u\right)+y_{\sigma u}\in \widetilde{H}_h(\Omega)$ is the discrete solution operator.

The adjoint of the discrete solution operator is equal to the discretized version of the adjoint solution operator. Thus we can write $A_h^*:p_h(u)=A_h^*(S_hu-y_d)$, the discrete adjoint state is the unique element $p_h(u)\in  H_h(\Omega)$ such that
\begin{equation}\label{dis-adj}
a_h(v_h, p_h(u))=(S_hu-y_d, v_h), ~~\forall v_h\in  H_h(\Omega).
\end{equation}

The following result for the adjoint state holds analogously with Theorem~\ref{theo_IFEM}.

\begin{theorem}\label{theo_IFEM-adj}
Let $p\in (\widetilde{H}^{2}(\Omega)\cap {H}^{1}_0(\Omega))$ and $p_h(u)\in  H_h(\Omega)$ be the solutions of (\ref{adjstate1})-(\ref{adjstate3}) and (\ref{dis-adj}) respectively. Then there exists a constant $C>0$ such that
\begin{eqnarray}\label{ph1}
&\|p-p_h(u)\|_{L^2(\Omega)}+h\|p-p_h(u)\|_{H^1(\Omega)}\\ \nonumber
&\leq Ch^2(\|y_d\|_{L^2(\Omega)}+\|f\|_{L^2(\Omega)}+\|u\|_{H^{1/2}(\Gamma)}+\|\tilde{y}_u\|_{H^3\left(N\left(\Gamma,\rho\right)\right)}),\\ \nonumber
&\|p-p_h(u)\|_{L^{2}(\Gamma)}\\ \label{p-gamal2}
&\leq Ch^{3/2}(\|y_d\|_{L^2(\Omega)}+\|f\|_{L^2(\Omega)}+\|u\|_{H^{1/2}(\Gamma)}+\|\tilde{y}_u\|_{H^3\left(N\left(\Gamma,\rho\right)\right)}).
\end{eqnarray}
\end{theorem}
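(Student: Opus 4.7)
The plan is to proceed by the classical intermediate-approximation trick, splitting the adjoint error using a discrete auxiliary adjoint computed from the \emph{exact} state. Concretely, I would introduce $\tilde p_h \in H_h(\Omega)$ defined by
\begin{equation*}
a_h(v_h,\tilde p_h) = (Su - y_d, v_h), \qquad \forall v_h \in H_h(\Omega),
\end{equation*}
and then write $p - p_h(u) = (p - \tilde p_h) + (\tilde p_h - p_h(u))$. The first summand measures the discretization error of the IFEM applied directly to the adjoint interface problem, and the second measures how much the state error $y - y_h(u)$ propagates into the adjoint through the right-hand side.

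For the first summand, I would invoke Theorem~\ref{theo_IFEM} applied to the adjoint equation \eqref{adjstate1}--\eqref{adjstate3}. This is legitimate because the adjoint has \emph{homogeneous} Neumann jump condition $[\beta\partial_{\mathbf n}p]_\Gamma = 0$, so the extension $y_{\sigma u}$ reduces to zero for the adjoint problem and the $\|\tilde y_u\|_{H^3(N(\Gamma,\rho))}$ term does not appear in the corresponding bound. Combined with Lemma~\ref{lm1} to estimate $\|Su - y_d\|_{L^2(\Omega)} \leq C(\|f\|_{L^2(\Omega)} + \|u\|_{H^{1/2}(\Gamma)} + \|y_d\|_{L^2(\Omega)})$, this yields
\begin{equation*}
\|p - \tilde p_h\|_{L^2(\Omega)} + h\|p - \tilde p_h\|_{H^1(\Omega)} \leq Ch^2(\|f\|_{L^2(\Omega)} + \|u\|_{H^{1/2}(\Gamma)} + \|y_d\|_{L^2(\Omega)}).
\end{equation*}

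For the second summand, I would note that by linearity $\tilde p_h - p_h(u)$ satisfies $a_h(v_h, \tilde p_h - p_h(u)) = (y - y_h(u), v_h)$ for every $v_h \in H_h(\Omega)$. Taking $v_h = \tilde p_h - p_h(u)$ and using the coercivity of $a_h$ on $H_h(\Omega) \subset H^1_0(\Omega)$ together with the Poincar\'e inequality gives $\|\tilde p_h - p_h(u)\|_{H^1(\Omega)} \leq C\|y - y_h(u)\|_{L^2(\Omega)}$, and by \eqref{l2} of Theorem~\ref{theo_IFEM} the right-hand side is already $O(h^2)$ with the desired dependence on the data. Since $\|\cdot\|_{L^2(\Omega)} \leq C\|\cdot\|_{H^1(\Omega)}$ on $H^1_0(\Omega)$, both the $L^2$ and $H^1$ contributions of this difference are at most $O(h^2)$, which is strictly better than required and hence absorbed into the estimate for the first summand. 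Combining the two splittings yields \eqref{ph1}.

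For the boundary estimate \eqref{p-gamal2}, I would apply the multiplicative trace inequality (valid for functions in $H^1(\Omega)$ with trace on the smooth curve $\Gamma \subset \Omega$),
\begin{equation*}
\|v\|_{L^2(\Gamma)} \leq C \|v\|_{L^2(\Omega)}^{1/2}\|v\|_{H^1(\Omega)}^{1/2},
\end{equation*}
to $v = p - p_h(u) \in H^1_0(\Omega)$ and insert the two estimates from \eqref{ph1} to obtain the product $h \cdot h^{1/2}$ on the right, giving the stated $h^{3/2}$ rate. The main delicate point I expect in carrying this out is justifying the multiplicative trace inequality uniformly for $v = p - p_h(u)$ despite $p$ only lying in $\widetilde H^2(\Omega)$ rather than $H^2(\Omega)$ and despite $p_h(u)$ being piecewise linear across interface elements; this is handled by splitting the integral over $\Gamma$ according to the two subdomains and applying the standard trace inequality on each side, using that both $p|_{\Omega^\pm}$ and $p_h(u)|_{\Omega^\pm}$ are in $H^1$ of the respective subdomain with matching traces on $\Gamma$, so that the jumps cancel and only the $H^1(\Omega)$ and $L^2(\Omega)$ norms appear on the right-hand side.
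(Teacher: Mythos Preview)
Your proof is correct, and for the interface estimate \eqref{p-gamal2} it takes a genuinely simpler route than the paper.

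For \eqref{ph1}, your intermediate splitting $p-p_h(u)=(p-\tilde p_h)+(\tilde p_h-p_h(u))$ and the paper's direct coercivity-plus-duality argument are minor organizational variants of the same idea; both ultimately rely on the IFEM approximation result (Theorem~\ref{theo_IFEM}) and the $O(h^2)$ state error in $L^2(\Omega)$. Your version is tidier because the second summand $\tilde p_h-p_h(u)$ is already $O(h^2)$ in $H^1(\Omega)$, so no separate Aubin--Nitsche step is needed there.

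The substantive difference is in \eqref{p-gamal2}. The paper derives the $L^2(\Gamma)$ bound via a duality argument: it sets $g=(p-p_h(u))|_\Gamma$, introduces an auxiliary interface problem with Neumann jump $g$, its IFEM approximation, and an extension $\varphi_{g\sigma}$, and then splits $\langle p-p_h(u),g\rangle$ into three pieces $K_1,K_2,K_3$, invoking fractional-order regularity ($\widetilde H^{3/2}_0$) and results analogous to those in \cite{Hansbo2002An,Casas2008Error}. Your approach bypasses all of this by applying the multiplicative trace inequality $\|v\|_{L^2(\Gamma)}\le C\|v\|_{L^2(\Omega^-)}^{1/2}\|v\|_{H^1(\Omega^-)}^{1/2}$ on the $C^2$ subdomain $\Omega^-$ to $v=p-p_h(u)\in H^1_0(\Omega)$ and reading off $h\cdot h^{1/2}$ from \eqref{ph1}. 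This is more elementary, avoids the auxiliary problem and the extension machinery, and requires only that $H_h(\Omega)\subset H^1_0(\Omega)$, which the paper already establishes. The paper's duality route would be the natural choice if one needed sharper negative-norm or localized estimates, but for the plain $L^2(\Gamma)$ bound at rate $h^{3/2}$ your argument suffices and is preferable.
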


\begin{proof}
For (\ref{ph1}), we can use the similar method which proves Theorem~\ref{theo_IFEM} in \cite{gong2008immersed} to deduce (\ref{ph1}). From coercivity, for all $v_h\in H_h(\Omega)$, we have
\begin{equation*}
\begin{aligned}
&\alpha\|p-p_h(u)\|^2_{H^1(\Omega)}\\
&\quad\leq a(p-p_h(u), p-p_h(u))=a(p-p_h(u), p-v_h)+a(p-p_h(u), v_h-p_h(u))\\
&\quad=a(p-p_h(u), p-v_h)+(y-y_h(u), v_h-p_h(u))\\
&\quad=a(p-p_h(u), p-v_h)+(y-y_h(u), v_h-p)+(y-y_h(u), p-p_h(u))\\
&\quad\leq Ch\|p-p_h(u)\|_{H^1(\Omega)}\|p\|_{\widetilde{H}^2(\Omega)}+Ch^2\|y-y_h(u)\|_{L^2(\Omega)}\|p\|_{\widetilde{H}^2(\Omega)}
\\
&\quad\quad+\|y-y_h(u)\|_{L^2(\Omega)}\|p-p_h(u)\|_{L^2(\Omega)}.
\end{aligned}
\end{equation*}

Using a Poincar$\acute{\mbox{e}}$ inequality, Theorem~\ref{theo_IFEM}, Young's inequality and elliptic regularity, we obtain
\begin{equation}\label{p-h}
\|p-p_h(u)\|_{H^1(\Omega)}\leq Ch(\|f\|_{L^2(\Omega)}+\|u\|_{H^{1/2}(\Gamma)}+\|\tilde{y}_u\|_{H^3\left(N\left(\Gamma,\rho\right)\right)}+\|y_d\|_{L^2(\Omega)}).
\end{equation}

For the error estimate $\|p-p_h(u)\|_{L^2(\Omega)}$, we will use a duality argument from a interface problem with homogeneous jump conditions, let $\varphi\in \widetilde{H}^{2}(\Omega)\cap {H}^{1}_0(\Omega)$ be the solution of the interface problem with homogeneous jump conditions, we have
$$
a(v, \varphi)=(v, p-p_h(u)), \qquad\forall v\in H^1_0(\Omega).
$$

Therefore, for all $v_h\in H_h(\Omega)$
\begin{equation*}
\begin{aligned}
&\|p-p_h(u)\|^2_{L^2(\Omega)}\\
&\quad=(p-p_h(u), p-p_h(u))=a(p-p_h(u), \varphi)\\
&\quad=a(p-p_h(u), \varphi-v_h)+a(p-p_h(u), v_h)\\
&\quad=a(p-p_h(u), \varphi-v_h)+(y-y_h(u), v_h-\varphi)+(y-y_h(u), \varphi)\\
&\quad\leq Ch\|p-p_h(u)\|_{H^1(\Omega)}\|\varphi\|_{\widetilde{H}^{2}(\Omega)}
+Ch^2\|y-y_h(u)\|_{L^2(\Omega)}\|\varphi\|_{\widetilde{H}^{2}(\Omega)}
\\
&\quad\quad+\|y-y_h(u)\|_{L^2(\Omega)}\|\varphi\|_{{L}^{2}(\Omega)}.
\end{aligned}
\end{equation*}

The error estimate (\ref{ph1}) can be obtained by using (\ref{p-h}), a Poincar$\acute{\mbox{e}}$ inequality, Theorem~\ref{theo_IFEM}, Young's inequality and elliptic regularity.

Next we will prove (\ref{p-gamal2}), we will use the duality argument, and consider the following auxiliary interface problem with inhomogeneous flux jump condition: given $g=\left(p-p_h(u)\right)\big|_{\Gamma}\in L^{2}(\Gamma)$, find $\varphi_g=\hat{\varphi}_{g}+\varphi_{g\sigma}\in \widetilde{H}^{3/2}_0(\Omega)={H}^{1}_0(\Omega)\cap {H}^{3/2}(\Omega^{\pm})$ such that
\begin{eqnarray}{\label{auxiliary}}
&&-\nabla \cdot (\beta(\mathbf{x})\nabla\varphi_g(\mathbf{x}))=0, \quad\mbox{ in }\Omega\backslash\Gamma,\\ \label{auxiliary2}
&&\left[\varphi_g(\mathbf{x})\right]_{\Gamma}=0,~~\left[\beta\partial_\mathbf{n} \varphi_g(\mathbf{x})\right]_{\Gamma}=g,\\
\label{auxiliary3}
&&\varphi_g(\mathbf{x})=0, \quad\mbox{ on }\partial \Omega.
\end{eqnarray}

Let $\varphi_{g_h}=\hat{\varphi}_{g_h}+\varphi_{g\sigma},~ \hat{\varphi}_{g_h}\in H_{h}(\Omega)$ be the solution of the corresponding variational problem
\begin{equation}\label{dis_state_auxiliary}
a_h(v_h,\hat{\varphi}_{g_h})=\langle v_h, g\rangle-\sum_{T\in\mathcal{T}_h^{int}}\sum_{s=\pm}\int_{T\cap\Omega^{s}}\beta\nabla \varphi_{g\sigma}\nabla v_h dx, \qquad\forall v_h\in H_h(\Omega),
\end{equation}
where the extension function $\varphi_{g\sigma}: \Omega\rightarrow R$ by
\begin{equation*}
  \varphi_{g\sigma}=\left\{ \begin{aligned}
  &H\left(\varphi\left(x\right)\right)I_h\tilde{\varphi}_g\left(x\right)-I_h\left(H\left(\varphi\left(x\right)\right)\tilde{\varphi}_g\left(x\right)\right),\quad &&\textup{if}\ ~ x\in\Omega_\Gamma,\\
 &0,\quad &&\textup{otherwise},\\
\end{aligned}  \right.
\end{equation*}
where $\displaystyle{\tilde{\varphi}_g\left(x\right)=\frac{g^e(x)}{\beta^-}d\left(x\right)}$ and $g^e(x)=g^e(x^*+\delta \mathbf{n})=g(x^*),~ \forall x\in N\left(\Gamma,\rho\right), ~ x^*\in \Gamma$ to be the extensions of $g$.

Then we have
\begin{equation*}
\begin{aligned}
&\langle p-p_h(u), g\rangle\\
&\quad=a_h(p-p_h(u), \hat{\varphi}_g)-\sum_{T\in\mathcal{T}_h^{int}}\sum_{s=\pm}\int_{T\cap\Omega^{s}}\beta\nabla \varphi_{g\sigma}\nabla \left(p-p_h(u)\right) dx\\
&\quad=a_h(p-p_h(u), \hat{\varphi}_g-\hat{\varphi}_{g_h})+a_h(p-p_h(u),\hat{\varphi}_{g_h})\\
&\quad\quad-\sum_{T\in\mathcal{T}_h^{int}}\sum_{s=\pm}\int_{T\cap\Omega^{s}}\beta\nabla \varphi_{g\sigma}\nabla \left(p-p_h(u)\right) dx\\
&\quad=a_h(p-p_h(u), \varphi_g-\varphi_{g_h})+a_h(p-p_h(u), \hat{\varphi}_{g_h})\\
&\quad\quad-\sum_{T\in\mathcal{T}_h^{int}}\sum_{s=\pm}\int_{T\cap\Omega^{s}}\beta\nabla \varphi_{g\sigma}\nabla \left(p-p_h(u)\right) dx\\
&\quad=K_1+K_2+K_3.
\end{aligned}
\end{equation*}

For $K_1$, and note that $\varphi_{g_h}-\varphi_g\in H^1_0(\Omega)$, we have
$$
|K_1|\leq \|p-p_h(u)\|_{H^1(\Omega)}\|\varphi_{g_h}-\varphi_g\|_{H^1(\Omega)}\leq Ch^{3/2}\|p\|_{\widetilde{H}^{2}(\Omega)}\|g\|_{L^2(\Gamma)},
$$
where we use $\|p-p_h(u)\|_{{H}^{1}(\Omega)}\leq Ch\|p\|_{\widetilde{H}^2(\Omega)}$(similarly with Theorem 6, \cite{Hansbo2002An}) and the fact $\|\varphi_{g_h}-\varphi_g\|_{H^1(\Omega)}\leq Ch^{1/2}\|g\|_{L^2(\Gamma)}$ (similarly with (4.4) for $s=1$ in Theorem 4.1,\cite{Casas2008Error}).

For $K_2$, we obtain
\begin{equation*}
\begin{aligned}
|K_2|&=|a_h(p-p_h(u), \hat{\varphi}_{g_h})|\\
&=|(y-y_h(u), \varphi_{g_h}-\varphi_g)+(y-y_h(u), \varphi_g-\varphi_{g\sigma})|\\
&\leq\|y-y_h(u)\|_{{L}^{2}(\Omega)}\|\varphi_{g_h}-\varphi_{g}\|_{{L}^{2}(\Omega)}+\|y-y_h(u)\|_{{L}^{2}(\Omega)}\left(\|\varphi_{g}\|_{{L}^{2}(\Omega)}+\|\varphi_{g\sigma}\|_{{L}^{2}(\Omega)}\right)\\
&\leq Ch^2(\|f\|_{L^2(\Omega)}+\|u\|_{H^{1/2}(\Gamma)}+\|\tilde{y}_u\|_{H^3\left(N\left(\Gamma,\rho\right)\right)})\cdot Ch^{3/2}\|g\|_{L^2(\Gamma)}\\
&~~+ Ch^2(\|f\|_{L^2(\Omega)}+\|u\|_{H^{1/2}(\Gamma)}+\|\tilde{y}_u\|_{H^3\left(N\left(\Gamma,\rho\right)\right)})\cdot \left(C\|\varphi_{g}\|_{\widetilde{H}^{3/2}_0(\Omega)}+\|\varphi_{g\sigma}\|_{{L}^{2}(\Omega_\Gamma)}\right),
\end{aligned}
\end{equation*}
where we use the Theorem~\ref{theo_IFEM}, the fact $\|\varphi_{g_h}-\varphi_g\|_{L^2(\Omega)}\leq Ch^{3/2}\|g\|_{L^2(\Gamma)}$ (similarly with (4.4) for $s=0$ in Theorem 4.1,\cite{Casas2008Error}), the embedding theorem $H^1(\Omega)\hookrightarrow L^\infty(\Omega)$, use the fact $g=\left(p-p_h(u)\right)\big|_{\Gamma}$ and the following estimate
\begin{equation}\label{extension}
\begin{aligned}
\|\varphi_{g\sigma}\|_{{L}^{2}(\Omega_\Gamma)}&=\left(\sum_{T\in\mathcal{T}_h^{int}}\|H\left(\varphi\left(x\right)\right)I_h\left(\tilde{\varphi}_{g}\right)-I_h\left(H\left(\varphi\left(x\right)\right)\tilde{\varphi}_{g} \right)\|^2_{L^{2}(T)}\right)^{1/2}\\
&\leq Ch\left(\sum_{T\in\mathcal{T}_h^{int}}\|
\tilde{\varphi}_{g}\|^2_{L^{\infty}(T)}\right)^{1/2}= Ch\left(\sum_{T\in\mathcal{T}_h^{int}}\|
\frac{g^e(x)}{\beta^-}d\left(x\right)\|^2_{L^{\infty}(T)}\right)^{1/2}\\
&\leq Ch^{2}\left(\sum_{T\in\mathcal{T}_h^{int}}\|g\|^2_{L^{\infty}(\tilde{\Gamma}_T)}\right)^{1/2}\leq Ch^{2}\left(\sum_{T\in\mathcal{T}_h^{int}}\|g\|^2_{L^{\infty}(T)}\right)^{1/2}\\
&\leq Ch^{2}\left(\sum_{T\in\mathcal{T}_h^{int}}\|g\|^2_{H^{1}(T)}\right)^{1/2}\leq Ch^{2}\|p-p_h(u)\|_{{H}^{1}(\Omega)},
\end{aligned}
\end{equation}
where $\tilde{\Gamma}_T$ is the projection of $T$ onto $\Gamma$.

For $K_3$, similar to $\left(\ref{extension}\right)$ we have $\|\varphi_{g\sigma}\|_{{H}^{1}(\Omega_\Gamma)}\leq Ch\|p-p_h(u)\|_{{H}^{1}(\Omega)}$ and using the definition of the extension function, we obtain
\begin{equation*}
\begin{aligned}
|K_3|&=|-\sum_{T\in\mathcal{T}_h^{int}}\sum_{s=\pm}\int_{T\cap\Omega^{s}}\beta\nabla \varphi_{g\sigma}\nabla \left(p-p_h(u)\right) dx|\\
&\leq C\|p-p_h(u)\|_{{H}^{1}(\Omega)}\|\varphi_{g\sigma}\|_{{H}^{1}(\Omega_\Gamma)}\\
&\leq Ch\|p-p_h(u)\|^2_{{H}^{1}(\Omega)}.
\end{aligned}
\end{equation*}

Since $\|\varphi_g\|_{\widetilde{H}^{3/2}_0(\Omega)}\leq C\|g\|_{L^{2}(\Gamma)}=C\|p-p_h(u)\|_{L^{2}(\Gamma)}$,
$\|p\|_{\widetilde{H}^2(\Omega)}\leq C(\|y\|_{L^2(\Omega)}+\|y_d\|_{L^2(\Omega)})$, using Young's inequality and $\left(\ref{p-h}\right)$, we get
$$
\|p-p_h(u)\|_{L^{2}(\Gamma)}\leq Ch^{3/2}(\|y_d\|_{L^2(\Omega)}+\|f\|_{L^2(\Omega)}+\|u\|_{H^{1/2}(\Gamma)}+\|\tilde{y}_u\|_{H^3\left(N\left(\Gamma,\rho\right)\right)}).
$$
\end{proof}

The finite dimensional approximation of the optimal control problem in the full discretization form reads as follows:
\par
$\mathbf{Problem:}$ $(\mathbf{P_h})$ %\label{p1}
Consider the problem of minimizing in $U_{ad}^h$
\begin{equation}\label{full-dis_J}
J_h(y_h,u_h)=\frac{1}{2}\int_\Omega(y_h-y_d)^2d\mathbf{x}+\frac{\alpha}{2}\int_{\Gamma_h} u_h^2d\Gamma_h,
\end{equation}
over all $y_h=q_h+y_{\sigma u},\  (q_h,u_h)\in  H_h(\Omega)\times U_{ad}^h$ subject to
\begin{equation}\label{full-dis_state_equation}
a_h(q_h,v_h)=(f,v_h)+\langle u_h, v_h\rangle_{\Gamma_h}-\sum_{T\in\mathcal{T}_h^{int}}\sum_{s=\pm}\int_{T\cap\Omega^{s}}\beta\nabla y_{\sigma u}\nabla v_h dx, \quad\forall v_h\in H_h(\Omega),
\end{equation}
where $U_{ad}^h=\{u_h\in \overline{U}_h: u_a\leq u_h\leq u_b ~~a.e.~~on ~~\Gamma_{h}\}$, $\overline{U}_h=\{u_h\in L^\infty (\Gamma_{h}): u_h|_{\Gamma_h^i} =constant \}$, $U_h=\{u_h\in L^\infty (\Gamma): u_h|_{\Gamma^i} =constant \}$, $S_h:y_h=S_h\hat{u}_h$ is the full discrete solution operator, where $\hat{u}_h=u_h\circ g_h^{-1}\in U_h\cap U_{ad}$ and the definition of $g_h^{-1}$ will be given later.

$\mathbf{Remark3:}$ %{\label{rm3}}
The discrete optimal control problem $(\mathbf{P_h})$ admits a unique solution $u^*_h$. The optimal discrete state $S_hu^*_h$ is denoted by $y_h^*$ and the optimal discrete adjoint state $p_h^*$ is given by $p_h^*=A_h^*(S_hu_h^*-y_d)$.

Similar to Theorem~\ref{th}, we have the following Theorem for $(\mathbf{P_h})$.
\begin{theorem}{\label{th2}}
Let $y^*_h=q_h^*+y_{\sigma u^*},\  (q_h^*, u^*_h)\in  H_h(\Omega)\times U_{ad}^h$ be the solution of interface control problem $(\mathbf{P_h})$ with PDE-constraints , then there exists an adjoint state $p^*_h\in  H_h(\Omega)$ such that the triplet $(y^*_h, u^*_h, p^*_h)$ satisfies the following optimality system
\begin{eqnarray}{\label{dopts}}
& a_h(y_h^*,v_h)=(f,v_h)+\langle u_h^*, v_h\rangle_{\Gamma_h}, & ~~~~\forall v_h\in H_h(\Omega),\\ \label{dopts2}
& a_h(v_h,p_h^*)=(S_{h}\hat{u}^*_h-y_d, v_h ), & ~~~~\forall  v_h\in  H_h(\Omega), \\ \label{dopts3}
& \langle \alpha u_h^*+ p_h^*, u-u^*_h\rangle_{\Gamma_h}\geq 0, & \forall u\in U_{ad}^h.
\end{eqnarray}
Moreover, the variational inequality is equivalent to the following projection equation
\begin{equation}\label{full-proj_equa}
u^*_h=\mathcal{P}_{[u_a,u_b]}\left(-\frac{1}{\alpha}\overline{R}_{h}\left( p^*_h|_{\Gamma_h} \right)\right),
\end{equation}
\end{theorem}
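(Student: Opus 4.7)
The plan is to mirror, step by step, the proof of Theorem \ref{th}, but in the fully discrete setting. Equation (\ref{dopts}) is immediate: by construction, $y_h^* = S_h \hat{u}_h^* = q_h^* + y_{\sigma u^*}$ solves (\ref{full-dis_state_equation}) with $u_h = u_h^*$, which is exactly (\ref{dopts}). Next, define $p_h^* \in H_h(\Omega)$ to be the unique element satisfying the discrete adjoint equation $a_h(v_h, p_h^*) = (S_h \hat{u}_h^* - y_d, v_h)$ for all $v_h \in H_h(\Omega)$, i.e.\ $p_h^* := A_h^*(S_h \hat{u}_h^* - y_d)$ in the notation of (\ref{dis-adj}). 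This gives (\ref{dopts2}) by definition.

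For the variational inequality (\ref{dopts3}), I would introduce the discrete reduced cost functional $\widehat{J}_h(u_h) := J_h(S_h \hat{u}_h, u_h)$ on $U_{ad}^h$ and compute its Fr\'{e}chet derivative via the discrete Lagrangian $L_h(y_h, u_h, p_h) := J_h(y_h, u_h) + (f, p_h) + \langle u_h, p_h\rangle_{\Gamma_h} - a_h(y_h, p_h)$. Following exactly the chain-rule argument used in Lemma \ref{lm3}, and choosing the multiplier so that $L_{h,y}'(y_h^*,u_h^*,p_h^*) = 0$ (which yields precisely (\ref{dopts2})) and $L_{h,p}'(\cdot)=0$ (which reproduces (\ref{dopts})), one obtains $\widehat{J}_h'(u_h^*)(u - u_h^*) = \langle \alpha u_h^* + p_h^*, u - u_h^*\rangle_{\Gamma_h}$ for every $u \in U_{ad}^h$. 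Since $U_{ad}^h$ is convex and $u_h^*$ is a minimizer, testing the difference quotient $\varepsilon^{-1}(\widehat{J}_h(u_h^* + \varepsilon(u-u_h^*)) - \widehat{J}_h(u_h^*))\ge 0$ and letting $\varepsilon \downarrow 0$ delivers (\ref{dopts3}) verbatim as in the continuous case.

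For the projection formula, the key observation is that $U_{ad}^h \subset \overline{U}_h$ consists of functions that are constant on each segment $\Gamma_h^i$. Accordingly, define $\overline{R}_h$ to be the $L^2(\Gamma_h)$-orthogonal projection onto $\overline{U}_h$, i.e.\ the piecewise-constant averaging operator $\overline{R}_h\phi\big|_{\Gamma_h^i} = |\Gamma_h^i|^{-1}\int_{\Gamma_h^i} \phi\, d\Gamma_h$. Since $u_h^*$ and each admissible $u$ are constant on $\Gamma_h^i$, the $L^2(\Gamma_h)$-pairing in (\ref{dopts3}) collapses to $\sum_i |\Gamma_h^i|\bigl(\alpha u_h^*|_{\Gamma_h^i} + \overline{R}_h(p_h^*|_{\Gamma_h})\big|_{\Gamma_h^i}\bigr)(u|_{\Gamma_h^i} - u_h^*|_{\Gamma_h^i}) \ge 0$, and the segment-wise box constraint $u_a \le u \le u_b$ decouples the inequality across segments. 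A standard pointwise argument on each $\Gamma_h^i$ (the classical characterization of the projection onto an interval) then yields (\ref{full-proj_equa}); conversely, (\ref{full-proj_equa}) clearly implies (\ref{dopts3}).

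The only nonstandard point is the bookkeeping between functions defined on the polygonal approximation $\Gamma_h$ (where $u_h^*$ lives and the inner product $\langle\cdot,\cdot\rangle_{\Gamma_h}$ is taken) and functions on the true interface $\Gamma$ (where $\hat{u}_h^* = u_h^* \circ g_h^{-1}$ lives and feeds into $S_h$). This does not affect the structure of the proof, but it is where I expect the main care is needed, namely in verifying that differentiating $\widehat{J}_h$ at $u_h^*$ genuinely produces the $\Gamma_h$-pairing in (\ref{dopts3}) and that $\overline{R}_h$ is the correct projection to match the discrete admissible set; everything else is a direct transcription of the continuous proof.
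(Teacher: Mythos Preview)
Your proposal is correct and follows exactly the approach the paper intends: the paper gives no explicit proof of this theorem but simply prefaces it with ``Similar to Theorem~\ref{th}'', and your argument is precisely the discrete transcription of that continuous proof (Lagrangian derivative for (\ref{dopts3}), then the segmentwise decoupling for (\ref{full-proj_equa})). One minor remark: you take $\overline{R}_h$ to be the $L^2(\Gamma_h)$-orthogonal projection (segmentwise averaging), which is the operator that makes the equivalence between (\ref{dopts3}) and (\ref{full-proj_equa}) exact; the paper's subsequent explicit definition of $R_h$ in Section~\ref{error} via midpoint evaluation is slightly at odds with this, but that is an ambiguity in the paper rather than in your argument.
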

where $\overline{R}_h$ the piecewise constant interpolation operator from $L^2(\Gamma_h)$ onto $\overline{U}_h$.

$\mathbf{Remark4:}$ It is equivalent to $u^*_h=\mathcal{P}_{[u_a,u_b]}\left(-\frac{1}{\alpha} R_{h}^\circ\left( p^*_h|_{\Gamma} \right)\right),$
where $R_{h}^\circ$ denotes the projection from continuous function space on $\Gamma$ onto piecewise constant space on $\Gamma_{h}$, the details will be defined later.

\section{A Priori Error Estimates}\label{error}
In this section, we will obtain the error estimates between problems $(\mathbf{P})$ and $(\mathbf{P_h})$.
The mapping $G_h$, which is a homeomorphism between $\Omega^{\pm}_h$ and $\Omega^\pm$, is shown in \cite{Deckelnick2009Finite,gong2011mixed}, where $\Omega^{\pm}_h$ denote the uniform triangulation segmentation of $\Omega$ with interface $\Gamma_h$. Abbreviating $g_h:=G_{h|\Gamma_h}$ as a pull-back operator from $\Gamma_h$ to $\Gamma$.
We denote by $R_hv:=\left[\overline{R}_h(v\circ g_h)\right]\circ g_h^{-1}$ the piecewise constant interpolation operator onto $U_h$. Let $M_{\Gamma^i}$ be the midpoint of the element $\Gamma^i\in \Gamma$ and $M_{\Gamma_h^i}=g_h^{-1}(M_{\Gamma^i})$ be the point of the element $\Gamma_h^i\in \Gamma_h$.
\begin{equation}\label{0-inter}
\begin{aligned}
&R_h:C(\Gamma)\rightarrow {U}_h, \hspace{5mm} R_h u|_{\Gamma^i}=u(M_{\Gamma^i}),\\
&R_h^\circ:C(\Gamma)\rightarrow \overline{U}_h, \hspace{5mm} R_h^\circ u|_{\Gamma_h^i}=(R_{h}u)\circ g_h(M_{\Gamma_h^i}),
\end{aligned}
\end{equation}
where $R_h^\circ u=(R_hu)\circ g_h$ and furthermore $(R_hu)\circ g_h=\overline{R}_h(u\circ g_h)$.
It is well known \cite{ciarlet1991basic, apel2015finite} that for all $v\in H^2(\Gamma^i)\bigcap W^{1,\infty}(\Gamma^i)$, we have
\begin{equation}\label{interh}
\begin{aligned}
\left|\int_{\Gamma^i}(v-R_hv)d\Gamma^i\right|
\leq Ch^2|\Gamma^i|^{1/2}\|v\|_{H^2(\Gamma^i)}, ~~~\forall ~\Gamma^i\in \Gamma,
\end{aligned}
\end{equation}
and
\begin{equation}\label{interh1}
\left|\int_{\Gamma^i}(v-R_hv)d\Gamma^i\right|\leq Ch|\Gamma^i|\|v\|_{W^{1,\infty}(\Gamma^i)}, ~~~\forall ~\Gamma^i\in \Gamma.
\end{equation}

The projection operator $P_h$ is defined by $P_hv:=\left[\overline{P}_h(v\circ g_h)\right]\circ g_h^{-1}$ the $L^2(\Gamma)$-projection from $L^2(\Gamma)$ onto $U_h$, where $\overline{P}_h$ the $L^2(\Gamma_h)$-projection from $L^2(\Gamma_h)$ onto $\overline{U}_h$. Namely for each $v\in L^2(\Gamma)$,
$$
\langle P_h v, v_h\tau_h\rangle=\langle v,v_h\tau_h\rangle, ~~\forall v_h\in U_h,~\mbox{where } \tau_h=detDG_h^{-1}|(DG_h)^{T}\circ G_h^{-1}{\bf n}|.
$$

It follows from Bramble-Hilbert Lemma\cite{brenner2008mathematical,gunzburger1991analysis} for all $v\in H^s(\Gamma)$ we obtain
\begin{equation}\label{inter2}
\|v-P_hv\|_{L^2(\Gamma)}\leq Ch^s\|v\|_{H^s(\Gamma)}, ~~s\in [0,1].
\end{equation}
We need the following standard Sobolev extension operators.

$\mathbf{Remark5:}$ There are two extension operators $E^+:H^2(\Omega^+)\rightarrow H^2(\Omega)\cap H_0^1(\Omega)$ and $E^-:H^2(\Omega^-)\rightarrow H^2(\Omega)\cap H_0^1(\Omega)$, such that $(E^\pm v)\mid_{\Omega^\pm}=v$
and $\|E^\pm v\|_{H^2(\Omega)}\leq C\|v\|_{H^2(\Omega^\pm)}$. The details can be found in lemma 3.3 of \cite{Wu2019AN}.

$\mathbf{Remark6:}$ In the following, define a new linear interpolation operator $I_h^*:$ $I_h^{*}v=(I_hv)\mid_{K}$,$K\in \mathcal{T}_h^{non}$; $I_h^{*}v=(I_h^+v^+, I_h^-v^-)\mid_{K}$, where $I_h^{\pm}v^{\pm}=\left(I_hE^{\pm}v^{\pm}\right)\mid_{K^{\pm}}$, $K\in \mathcal{T}_h^{int}$.

Since we are focus on the errors on the interface, we will need the following inverse estimates, see\cite{mateos2011saturation}.
\begin{lemma}\label{invers}
Let $v_h\in V_{h}=\{v\in C(\bar{\Omega})| v|_T\in P_1(T), ~\forall T\in \mathcal{T}_h \}$. Then, for every $2\leq q <\infty$, there exists $C>0$ such that
\begin{equation}\label{invers1}
\|v_h\|_{L^q(\partial \Omega)}\leq Ch^{-1/q}\|v_h\|_{L^q(\Omega)},
\end{equation}
and
\begin{equation}\label{invers2}
\|v_h\|_{H^1(\partial \Omega)}\leq Ch^{-3/2}\|v_h\|_{L^2(\Omega)}.
\end{equation}
\end{lemma}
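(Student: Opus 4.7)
The plan is to prove both inequalities by a local scaling argument on each boundary triangle, using the fact that $V_h$ restricted to a single element is the three-dimensional space $P_1$, on which all norms are equivalent. Because $\mathcal{T}_h$ is uniform and $\partial\Omega$ is polygonal, every boundary edge $e\subset\partial\Omega$ coincides with a full edge of a unique element $T_e\in\mathcal{T}_h$ with $|e|\sim h$ and $|T_e|\sim h^2$, so the affine map $\Phi_{T}:\hat T\to T$ from the reference triangle satisfies $|\det D\Phi_T|\sim h^2$ and $|D\Phi_T|\sim h$. I would first prove the local versions of \eqref{invers1}--\eqref{invers2} on a single boundary element and then sum.

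For \eqref{invers1}, fix a boundary edge $e\subset\partial T_e$. On $\hat T$ norm equivalence on $P_1(\hat T)$ yields a reference trace inequality $\|\hat v\|_{L^q(\hat e)}\le C\|\hat v\|_{L^q(\hat T)}$. Pulling back via $\Phi_{T_e}$ gives $\|v_h\|_{L^q(e)}^q=h\|\hat v\|_{L^q(\hat e)}^q$ and $\|v_h\|_{L^q(T_e)}^q\sim h^2\|\hat v\|_{L^q(\hat T)}^q$, hence
\begin{equation*}
\|v_h\|_{L^q(e)}^q\le Ch^{-1}\|v_h\|_{L^q(T_e)}^q.
\end{equation*}
Summing over all boundary edges and using that the boundary triangles have finite overlap with $\Omega$ gives $\|v_h\|_{L^q(\partial\Omega)}^q\le Ch^{-1}\|v_h\|_{L^q(\Omega)}^q$, and taking the $q$-th root produces the factor $h^{-1/q}$.

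For \eqref{invers2} I would handle the two contributions to $\|v_h\|_{H^1(e)}^2=\|v_h\|_{L^2(e)}^2+\|\partial_\tau v_h\|_{L^2(e)}^2$ separately. The $L^2(e)$ piece follows from \eqref{invers1} with $q=2$, giving $\|v_h\|_{L^2(e)}^2\le Ch^{-1}\|v_h\|_{L^2(T_e)}^2$. For the tangential derivative, since $v_h|_{T_e}\in P_1$, on $\hat T$ the tangential derivative along $\hat e$ satisfies $\|\partial_{\hat\tau}\hat v\|_{L^2(\hat e)}\le C\|\hat v\|_{L^2(\hat T)}$, again by equivalence of norms on a finite-dimensional space. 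The scaling $\partial_\tau v_h\sim h^{-1}\partial_{\hat\tau}\hat v$ together with $\|v_h\|_{L^2(T_e)}^2\sim h^2\|\hat v\|_{L^2(\hat T)}^2$ yields
\begin{equation*}
\|\partial_\tau v_h\|_{L^2(e)}^2=h^{-1}\|\partial_{\hat\tau}\hat v\|_{L^2(\hat e)}^2\le Ch^{-1}\|\hat v\|_{L^2(\hat T)}^2\le Ch^{-3}\|v_h\|_{L^2(T_e)}^2.
\end{equation*}
Adding the two local estimates and summing over boundary elements gives $\|v_h\|_{H^1(\partial\Omega)}^2\le Ch^{-3}\|v_h\|_{L^2(\Omega)}^2$, which is \eqref{invers2}.

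There is no serious obstacle here: the proofs are entirely standard scaling/inverse-inequality arguments and do not interact with the interface structure at all (the interior interface $\Gamma$ and the IFEM modifications play no role, since $V_h$ is the ordinary continuous $P_1$ space and $\partial\Omega$ lies away from $\Gamma$). The only point requiring a little care is separating the tangential-derivative contribution on $e$, which forces the factor $h^{-3/2}$ rather than the $h^{-1/2}$ one would get from trace alone; this is why the $L^2\to H^1$ estimate costs two additional half-powers of $h$. If one wanted a unified argument, one could instead invoke the standard inverse estimate $\|w_h\|_{H^1(T)}\le Ch^{-1}\|w_h\|_{L^2(T)}$ followed by the scaled trace inequality $\|w_h\|_{H^1(e)}\le Ch^{-1/2}\|w_h\|_{H^1(T)}$, which produces the same $h^{-3/2}$.
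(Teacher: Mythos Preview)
Your proof is correct: the local scaling/reference-element argument is exactly the standard way to obtain these inverse trace inequalities, and both computations (the $L^q$ trace and the $H^1$ trace via the tangential derivative) are carried out cleanly.

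Note that the paper does not actually supply its own proof of this lemma; it simply states the result and refers to \cite{mateos2011saturation}. So there is nothing to compare against in the paper itself. What you have written is essentially the argument one finds in that reference (and in standard finite element texts), so your approach is consistent with the intended one. Your closing remark that one can equivalently combine the bulk inverse estimate $\|w_h\|_{H^1(T)}\le Ch^{-1}\|w_h\|_{L^2(T)}$ with the scaled trace inequality $\|w_h\|_{H^1(e)}\le Ch^{-1/2}\|w_h\|_{H^1(T)}$ is also correct and is a common alternative presentation.
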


We also need some boundedness results of the solution operators.
\begin{lemma}\label{bounded}
There exists $C>0$ independent of $h$ such that for every $z\in L^2(\Omega)$
\begin{align}
 \label{bounded2}
&\|A_h^*z\|_{L^\infty(\Gamma)}\leq C\|z\|_{L^2(\Omega)}, \\ \label{bounded3}
&\|A_h^*z\|_{L^{2}(\Gamma)}\leq C\|z\|_{L^2(\Omega)}, \\ \label{bounded4}
&\|A_h^*z\|_{L^2(\Omega)}\leq C\|z\|_{L^2(\Omega)}, \\ \label{bounded5}
&\|A_h^*z\|_{H^1(\Gamma)}\leq C\|z\|_{L^2(\Omega)}.
\end{align}

\end{lemma}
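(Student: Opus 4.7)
The plan is to deduce each of (\ref{bounded2})--(\ref{bounded5}) from the triangle inequality $\|A_h^*z\|_X \leq \|p\|_X + \|p - p_h\|_X$, where $p := A^*z \in \widetilde{H}^{2}(\Omega)\cap H_0^1(\Omega)$ is the continuous adjoint solution driven by the right-hand side $z$ and $X$ stands for the appropriate norm. The continuous parts are handled by combining the regularity estimate (\ref{adj-regularity}) applied with $y - y_d$ replaced by $z$ (so that $\|p\|_{\widetilde{H}^2(\Omega)\cap H_0^1(\Omega)} \leq C\|z\|_{L^2(\Omega)}$) with the trace theorem on each sub-domain, giving $\|p\|_{H^{3/2}(\Gamma)} \leq C\|z\|_{L^2(\Omega)}$, and the one-dimensional Sobolev embedding $H^{1}(\Gamma)\hookrightarrow L^\infty(\Gamma)$. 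This yields
\[
\|p\|_{L^2(\Omega)} + \|p\|_{L^2(\Gamma)} + \|p\|_{L^\infty(\Gamma)} + \|p\|_{H^1(\Gamma)} \leq C\|z\|_{L^2(\Omega)},
\]
so only the discrete error $\|p - p_h\|_X$ remains to be controlled in each of the four norms.

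For (\ref{bounded3}) and (\ref{bounded4}), Theorem~\ref{theo_IFEM-adj} (again with $f = 0$, $u = 0$ and $y_d$ replaced by $z$) directly yields $\|p - p_h\|_{L^2(\Omega)} \leq Ch^2\|z\|_{L^2(\Omega)}$ and $\|p - p_h\|_{L^2(\Gamma)} \leq Ch^{3/2}\|z\|_{L^2(\Omega)}$, both of which are bounded uniformly in $h$, so these two estimates drop out of the triangle inequality at once.

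For (\ref{bounded2}) and (\ref{bounded5}) one must trade a sharper norm against a negative power of $h$ through the inverse inequalities of Lemma~\ref{invers}. Inserting the IFE interpolant $I_h^* p$ of Remark~6, I split
\[
\|p - p_h\|_{L^\infty(\Gamma)} \leq \|p - I_h^* p\|_{L^\infty(\Gamma)} + \|I_h^* p - p_h\|_{L^\infty(\Gamma)},
\]
and analogously in the $H^1(\Gamma)$ norm. The interpolation terms on $\Gamma$ follow from standard one-dimensional interpolation estimates along the smooth curve $\Gamma$ applied to $p|_\Gamma \in H^{3/2}(\Gamma)$, giving $O(h)$ in both norms. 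For the finite-element remainder $I_h^* p - p_h \in H_h(\Omega)$ I apply the analogue on $\Gamma$ of the inverse estimates $\|v_h\|_{L^\infty(\Gamma)} \leq Ch^{-1/2}\|v_h\|_{L^2(\Gamma)}$ and $\|v_h\|_{H^1(\Gamma)} \leq Ch^{-1}\|v_h\|_{L^2(\Gamma)}$ of Lemma~\ref{invers}, together with the $L^2(\Gamma)$ bound $\|I_h^* p - p_h\|_{L^2(\Gamma)} \leq \|I_h^* p - p\|_{L^2(\Gamma)} + \|p - p_h\|_{L^2(\Gamma)} = O(h^{3/2})\|z\|_{L^2(\Omega)}$. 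Multiplying gives $O(h)\|z\|_{L^2(\Omega)}$ in $L^\infty(\Gamma)$ and $O(h^{1/2})\|z\|_{L^2(\Omega)}$ in $H^1(\Gamma)$; adding the continuous parts completes all four estimates.

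The main difficulty is (\ref{bounded5}): the $H^1(\Gamma)$ inverse estimate consumes a full factor of $h$, so the $h^{3/2}$ interface rate of (\ref{p-gamal2}) is strictly needed to survive and only an $h^{1/2}$ cushion remains. One must also verify that the inverse estimates of Lemma~\ref{invers}, stated there for $\partial\Omega$, transfer to the internal interface $\Gamma_h$ when applied to IFE functions. This is justified because IFE basis functions are continuous at every intersection of $\Gamma$ with an element edge and reduce to standard piecewise linears on non-interface elements, so the usual element-by-element scaling argument behind the trace-inverse inequality applies on the line segments that make up $\Gamma_h$; transferring the estimate from $\Gamma_h$ to $\Gamma$ is then routine via the bi-Lipschitz pull-back $g_h$, whose Jacobian is bounded above and below uniformly in $h$.
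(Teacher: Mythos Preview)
Your argument is essentially correct, but it follows a noticeably different path from the paper's proof, so a brief comparison is warranted.

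For (\ref{bounded3}) and (\ref{bounded4}) the paper does \emph{not} go through the continuous solution and the error Theorem~\ref{theo_IFEM-adj}; it argues directly from coercivity,
\[
\|A_h^*z\|_{H^1(\Omega)}^2\le C\,a_h(A_h^*z,A_h^*z)=C(z,A_h^*z)\le C\|z\|_{L^2(\Omega)}\|A_h^*z\|_{H^1(\Omega)},
\]
and then applies the trace theorem and the embedding $H^1\hookrightarrow L^2$. This is shorter and avoids any appeal to a priori error estimates. Your route via $\|p\|_X+\|p-p_h\|_X$ and Theorem~\ref{theo_IFEM-adj} (with both the continuous and discrete adjoints driven by the same right-hand side $z$, so the $y-y_h$ contributions vanish) also works, but is more indirect.

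For (\ref{bounded2}) the paper uses a genuinely different tool: $W^{1,p}$ stability of the Galerkin solution (Theorem~8.5.3 in \cite{brenner2008mathematical}) combined with the embedding $W^{1-1/p,p}(\Gamma)\hookrightarrow L^\infty(\Gamma)$, i.e.\ it bounds $\|A_h^*z\|_{W^{1,p}(\Omega)}$ directly. Your alternative, combining the $L^2(\Gamma)$ error of order $h^{3/2}$ with a one-dimensional inverse inequality $\|v_h\|_{L^\infty(\Gamma)}\le Ch^{-1/2}\|v_h\|_{L^2(\Gamma)}$, is valid and arguably more elementary.

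For (\ref{bounded5}) both proofs insert the interpolant $I_h^*A^*z$ and rely on an inverse estimate, but with different balances: the paper uses (\ref{invers2}), i.e.\ $\|v_h\|_{H^1(\partial\Omega^\pm)}\le Ch^{-3/2}\|v_h\|_{L^2(\Omega^\pm)}$, together with the $O(h^2)$ error in $L^2(\Omega)$; you use a purely one-dimensional inverse estimate $\|v_h\|_{H^1(\Gamma)}\le Ch^{-1}\|v_h\|_{L^2(\Gamma)}$ together with the $O(h^{3/2})$ error in $L^2(\Gamma)$. Both yield the needed $O(h^{1/2})$ cushion. One small correction: with only $p|_\Gamma\in H^{3/2}(\Gamma)$ the interpolation bound in $H^1(\Gamma)$ is $O(h^{1/2})$, not $O(h)$ as you wrote; since only uniform boundedness is required here, this does not affect the conclusion.

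In summary, your uniform ``continuous part plus discrete error'' decomposition is a legitimate alternative; the paper's proof mixes a direct energy argument for the weaker norms with a $W^{1,p}$ stability result for the $L^\infty(\Gamma)$ bound, and for $H^1(\Gamma)$ goes through $L^2(\Omega)$ rather than $L^2(\Gamma)$.
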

\begin{proof}
For the first one, it follows from the usual trace theory, the fact that $p>2$, usual Sobolev embeddings, Theorem(8.5.3) in \cite{brenner2008mathematical} and the continuous inclusion $L^2(\Omega)\subset W^{-1,p\prime}(\Omega)$, we obtain
$$
\|A_h^*z\|_{L^\infty(\Gamma)}\leq C\|A_h^*z\|_{W^{1-1/p,p}(\Gamma)}\leq C(\|A_h^*z\|_{W^{1-1/p,p}(\partial \Omega^+)}+\|A_h^*z\|_{W^{1-1/p,p}(\partial \Omega^-)})
$$
$$
\leq C(\|A_h^*z\|_{W^{1,p}(\Omega^+)}+\|A_h^*z\|_{W^{1,p}(\Omega^-)})\leq C\|A_h^*z\|_{W^{1,p}(\Omega)}\leq C\|A^*z\|_{W^{1,p}(\Omega)}\leq  C\|z\|_{L^2(\Omega)}.
$$

For the second statement, using the trace Theorem, we have
$$
\|A_h^*z\|_{L^{2}(\Gamma)}\leq \|A_h^*z\|_{L^{2}(\partial\Omega^+)}+\|A_h^*z\|_{L^{2}(\partial\Omega^-)}
\leq \|A_h^*z\|_{H^1(\Omega^+)}+\|A_h^*z\|_{H^1(\Omega^-)}
$$
$$
\leq 2\|A_h^*z\|_{H^1(\Omega)}\leq C \|z\|_{L^2(\Omega)}.
$$

For the third statement, the embedding $H^1(\Omega)\rightarrow L^2(\Omega)$ yields
$$
\|A_h^*z\|_{H^1(\Omega)}^2\leq Ca_h(A_h^*z, A_h^*z)=C(z, A_h^*z)\leq C\|z\|_{L^2(\Omega)}\|A_h^*z\|_{H^1(\Omega)},
$$
then we obtain this fourth estimate with the embedding $H^1(\Omega)\rightarrow L^2(\Omega)$.

For the last estimates, by using the linear interpolation operator $I_h^*$, the Lemma~\ref{invers}, the approximation properties of $I_h$, Theorem~\ref{theo_IFEM-adj}, the embedding theorem, the trace theorem and the corresponding boundedness of $A^*$, we have
$$
\|A_h^*z\|_{H^1(\Gamma)}\leq \|A_h^*z-I_h^*A^*z\|_{H^1(\Gamma)}
+\|I_h^*A^*z-A^*z\|_{H^1(\Gamma)}+\|A^*z\|_{H^1(\Gamma)}
$$
$$
\leq  (\|A_h^*z-I_h^*A^*z\|_{H^1(\partial\Omega^+)}
+\|A_h^*z-I_h^*A^*z\|_{H^1(\partial\Omega^-)})
$$
$$
+C\left(\sum\limits_{K \in \mathcal{T}_h^{int}}\left(\|I_h^+(A^*z)^+-(A^*z)^+\|^2_{H^1(\partial K^+)}+\|I_h^-(A^*z)^--(A^*z)^-\|^2_{H^1(\partial K^-)}\right)\right)^{1/2}
$$
$$
+(\|A^*z\|_{H^1(\partial\Omega^+)}+\|A^*z\|_{H^1(\partial\Omega^-)})
$$
$$
\leq Ch^{-3/2}(\|A_h^*z-I_h^*A^*z\|_{L^2(\Omega^+)}+\|A_h^*z-I_h^*A^*z\|_{L^2(\Omega^-)})
$$
$$
+C\left(h\sum\limits_{K \in \mathcal{T}_h^{int}}\left(\|E^+(A^*z)^+\|^2_{H^2( K)}+\|E^-(A^*z)^-\|^2_{H^2( K)}\right)\right)^{1/2}$$
$$+C(\|A^*z\|_{H^2(\Omega^+)}+\|A^*z\|_{H^2(\Omega^-)})
$$
$$
\leq Ch^{-3/2}(\|A_h^*z-A^*z\|_{L^2(\Omega)}+\|A^*z-I_hA^*z\|_{L^2(\Omega)})+Ch^{1/2}\|A^*z\|_{\widetilde{H}^2(\Omega)}+C\|A^*z\|_{\tilde{H}^2(\Omega)}
$$

$$
\leq Ch^{-3/2}(h^2\|z\|_{L^2(\Omega)}+h^2\|A^*z\|_{\widetilde{H}^2(\Omega)})
+Ch^{1/2}\|z\|_{L^{2}(\Omega)}+C\|z\|_{L^2(\Omega)}\leq C\|z\|_{L^2(\Omega)}.
$$
\end{proof}

We will make the following assumption:
\par
$\mathbf{Assumption}$ 1:\label{asum1}
Let $K_1=\cup\{\Gamma^i| u^*|_{\Gamma^i}\notin H^2(\Gamma^i)\}$ and $K_2=\Gamma-K_1$. We suppose that there exists a positive constant $C$ independent of $h$ such that measure($K_1$)$\leq Ch$.

We state the main theorem in the following:
\begin{theorem}\label{main-th}
Let $(u^*,y^*,p^*)$ and $(u_h^*,y_h^*,p_h^*)$ be the solutions of the problems $(\mathbf{P})$ and $(\mathbf{P_h})$ respectively. Then there exists a constant $C>0$, independent of $h$, such that
\begin{eqnarray}\nonumber
&\|u^*-\tilde{u}^*_h\|_{L^{2}(\Gamma)} \\ \label{theo1}
&\leq Ch^{3/2}\left(\|u^*\|_{H^{1/2}(\Gamma)}+\|y_d\|_{C^{0,\lambda}(\overline{\Omega})}+\|f\|_{L^2(\Omega)}+\|\tilde{y}_{u^*}\|_{H^3\left(N\left(\Gamma,\rho\right)\right)}\right),\\ \nonumber
&\|y^*-y^*_h\|_{L^2(\Omega)} \\ \label{theo2}
&\leq Ch^{3/2}\left(\|u^*\|_{H^{1/2}(\Gamma)}+\|y_d\|_{C^{0,\lambda}(\overline{\Omega})}+\|f\|_{L^2(\Omega)}+\|\tilde{y}_{u^*}\|_{H^3\left(N\left(\Gamma,\rho\right)\right)}\right),\\
\nonumber
&\|p^*-p^*_h\|_{L^{2}(\Gamma)}+\|p^*-p^*_h\|_{L^2(\Omega)} \\ \label{theo3}
&\leq Ch^{3/2}\left(\|u^*\|_{H^{1/2}(\Gamma)}+\|y_d\|_{C^{0,\lambda}(\overline{\Omega})}+\|f\|_{L^2(\Omega)}+\|\tilde{y}_{u^*}\|_{H^3\left(N\left(\Gamma,\rho\right)\right)}\right),
\end{eqnarray}
\end{theorem}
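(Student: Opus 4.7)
The plan is to establish the control estimate \eqref{theo1} first, and then to deduce the state and adjoint estimates \eqref{theo2}--\eqref{theo3} from it together with Theorems~\ref{theo_IFEM} and~\ref{theo_IFEM-adj}. For the control I would combine the two variational inequalities \eqref{opts3} and \eqref{dopts3}: test the continuous VI with $v=\tilde u_h^*\in U_{ad}$ (the pull-back of the discrete optimal control to $\Gamma$, whose pointwise bounds are preserved by $g_h$), and test the discrete VI with an admissible approximant of $u^*$ in $U_{ad}^h$ (either $\bar R_h(u^*\circ g_h^{-1})$ or its $L^2(\Gamma_h)$-projection onto piecewise constants, both of which respect the bounds). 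After transporting both inequalities to $\Gamma$ via $g_h$ and using the coercivity of the $\alpha u$ terms, this should yield an estimate of the shape
$$\alpha\|u^*-\tilde u_h^*\|^2_{L^2(\Gamma)}\le \bigl\langle p_h^*\circ g_h^{-1}-p^*,\ u^*-\tilde u_h^*\bigr\rangle_{\Gamma}+(\text{projection and geometric remainders}).$$

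The right-hand side would then be treated through the three-term splitting $p^*-R_h^\circ p_h^*=(p^*-R_h p^*)+R_h(p^*-p_h(u^*))+R_h(p_h(u^*)-p_h^*)$, where $p_h(u^*):=A_h^*(S_h u^*-y_d)$ is the discrete adjoint evaluated at the continuous optimum. The middle piece is $O(h^{3/2})$ by the sharp interface estimate \eqref{p-gamal2}. The last piece equals $A_h^*\bigl(S_h u^*-S_h\tilde u_h^*\bigr)\big|_\Gamma$; by \eqref{bounded3} combined with the $L^2(\Gamma)\to L^2(\Omega)$ Lipschitz continuity of $S_h$ it is at most $Ch\,\|u^*-\tilde u_h^*\|_{L^2(\Gamma)}$, which can be absorbed into the left-hand side for sufficiently small $h$. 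The first piece is the midpoint interpolation error for $p^*$ on $\Gamma$, and here Assumption~1 is decisive: on the portion of $\Gamma$ where $u^*$, hence $p^*$ through \eqref{proj_equa}, is $H^2$, testing against piecewise constants lets one invoke the midpoint-rule superconvergence \eqref{interh} and a Cauchy--Schwarz over segments to gain effectively $O(h^2)$, while on the exceptional set $K_1$ only \eqref{interh1} is available; the measure bound $|K_1|\le Ch$ together with Cauchy--Schwarz restores the global $O(h^{3/2})$ rate.

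Once \eqref{theo1} is in hand, \eqref{theo2} follows from $y^*-y_h^*=(Su^*-S_hu^*)+S_h(u^*-\tilde u_h^*)$, using \eqref{l2} for the first term and the uniform $L^2(\Omega)$-stability of $S_h$ with respect to the boundary data for the second. Likewise for the adjoint, I would write $p^*-p_h^*=\bigl(A^*(Su^*-y_d)-A_h^*(S_hu^*-y_d)\bigr)+A_h^*\bigl(S_hu^*-S_h\tilde u_h^*\bigr)$: the first summand is bounded by \eqref{ph1} in $L^2(\Omega)$ and by \eqref{p-gamal2} in $L^2(\Gamma)$ (both $O(h^{3/2})$ or better), while the second is controlled by Lemma~\ref{bounded}, specifically \eqref{bounded3} and \eqref{bounded4}, multiplied by the already-established $O(h^{3/2})$ control error, delivering \eqref{theo3} in both norms.

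The main obstacle is the superconvergence step for the piecewise-constant projection of $p^*$: the naive bound for $\|p^*-R_h^\circ p^*\|_{L^2(\Gamma)}$ is only $O(h)$, and only by combining midpoint-rule cancellation on the smooth part of $\Gamma$ with Assumption~1 on the non-smooth part does one obtain the sharp $O(h^{3/2})$ rate that matches \eqref{p-gamal2}. A closely related technical point is the correct bookkeeping of the geometric discrepancy between $\Gamma$ and $\Gamma_h$: the homeomorphism $G_h$, the pull-back $g_h$ and the Jacobian $\tau_h$ each produce perturbation terms that must, using the $C^2$-smoothness of $\Gamma$, be shown to contribute at most $O(h^2)$, so that they are absorbed into rather than dominating the $O(h^{3/2})$ target.
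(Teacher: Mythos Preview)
Your proposal has two genuine gaps that prevent it from going through as written.

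First, you misidentify $\tilde u_h^*$. In the paper, $\tilde u_h^*$ is \emph{not} the pull-back of the discrete optimal control; it is the \emph{postprocessed} control defined in \eqref{postprocessing} by $\tilde u_h^*=\mathcal{P}_{[u_a,u_b]}\!\bigl(-\tfrac{1}{\alpha}p_h^*|_\Gamma\bigr)$. The pull-back is denoted $\hat u_h^*=u_h^*\circ g_h^{-1}$, and it is $\hat u_h^*$, not $\tilde u_h^*$, that enters $y_h^*=S_h\hat u_h^*$ and the discrete variational inequality \eqref{dopts3}. Since $\tilde u_h^*$ does not satisfy \eqref{dopts3}, you cannot combine the two variational inequalities with $\tilde u_h^*$ as you propose; and your state splitting $y^*-y_h^*=(Su^*-S_hu^*)+S_h(u^*-\tilde u_h^*)$ is simply not an identity. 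Consequently the logical order must be reversed: the paper first controls the auxiliary quantity $\|R_hu^*-\hat u_h^*\|_{L^2(\Gamma)}$ (Lemma~\ref{lm4-1}), then deduces \eqref{theo2}--\eqref{theo3}, and only at the very end obtains \eqref{theo1} from the $L^2(\Gamma)$ adjoint error via the Lipschitz property of $\mathcal{P}_{[u_a,u_b]}$.

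Second, your absorption argument for the ``last piece'' is unfounded. Neither \eqref{bounded3} nor any Lipschitz bound for $S_h$ produces an $h$-factor, so $\|A_h^*(S_hu^*-S_h\hat u_h^*)\|_{L^2(\Gamma)}\le C\|u^*-\hat u_h^*\|_{L^2(\Gamma)}$ with a constant independent of $h$, and this cannot be absorbed. The correct mechanism is a \emph{sign} argument: once the comparison is between $R_hu^*$ and $\hat u_h^*$ (both in $U_h$), the coupling term in \eqref{u-2} becomes
\[
H_3=\langle A_h^*(S_hR_hu^*-S_h\hat u_h^*),\,\hat u_h^*-R_hu^*\rangle=-\|S_h(\hat u_h^*-R_hu^*)\|_{L^2(\Omega)}^2\le 0,
\]
and is discarded rather than absorbed. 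This is precisely why the paper compares $R_hu^*$ with $\hat u_h^*$ rather than $u^*$ with $\hat u_h^*$. A related correction: Assumption~1 is invoked not for $p^*$ (which lies in $H^2(\Gamma)$ by \eqref{J123}, so \eqref{interh} applies directly in $H_1$) but for $u^*$, whose kinks from the projection \eqref{proj_equa} force the split $K_1\cup K_2$; this enters in Lemma~\ref{lm4-2} when bounding $\|S_h(u^*-R_hu^*)\|_{L^2(\Omega)}$ via the midpoint rule \eqref{interh}--\eqref{interh1}.
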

where $\tilde{u}^*_h$ is defined later in (\ref{postprocessing}). Before we prove this Theorem, we first give some Lemmas in the following.

\begin{lemma}\label{lm4-2}
Let assumption 1 hold, then we have the following estimate
\begin{equation}\label{ineq2}
\begin{aligned}
&\|A^*_h(S_hu^*-y_d)-A^*_h(S_hR_h u^*-y_d)\|_{L^{2}(\Gamma)}\\
&\quad\leq C h^{2}\left(\|u^*\|_{H^{1/2}(\Gamma)}+\|y_d\|_{C^{0,\lambda}(\overline{\Omega})}+\|f\|_{L^2(\Omega)}
%{\color{blue}{+\|\tilde{y}_{u^*}\|_{H^3\left(N\left(\Gamma,\rho\right)\right)}}}
\right).
\end{aligned}
\end{equation}
\end{lemma}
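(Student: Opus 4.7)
The plan is to use linearity and an Aubin--Nitsche-type duality that exploits the superconvergence of the midpoint interpolation $R_h$. First, because $A_h^*$ is linear and $S_h$ is affine in the control (the $f$- and $y_d$-contributions cancel in the difference), one can rewrite
\[
A_h^*(S_hu^*-y_d)-A_h^*(S_hR_h u^*-y_d) = A_h^* S_h^{\mathrm{lin}} w, \qquad w:=u^*-R_h u^*,
\]
where $S_h^{\mathrm{lin}}$ denotes the IFEM state operator with $f=0$. Setting $\eta_h := S_h^{\mathrm{lin}} w$ and $\xi_h := A_h^*\eta_h$, the trace-type bound \eqref{bounded3} of Lemma~\ref{bounded} yields $\|\xi_h\|_{L^2(\Gamma)} \leq C\|\eta_h\|_{L^2(\Omega)}$, so the task reduces to proving $\|\eta_h\|_{L^2(\Omega)} \leq Ch^2\bigl(\|u^*\|_{H^{1/2}(\Gamma)} + \|y_d\|_{C^{0,\lambda}(\overline{\Omega})} + \|f\|_{L^2(\Omega)}\bigr)$.

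Since $\|w\|_{L^2(\Gamma)}$ is only of order $h$, a direct continuous-dependence estimate on $S_h^{\mathrm{lin}}$ is insufficient. I therefore run an Aubin--Nitsche duality: for $z\in L^2(\Omega)$, let $\psi_z := A^* z\in \widetilde H^2(\Omega)\cap H^1_0(\Omega)$ be the continuous adjoint state. Manipulating the variational identity for $\eta_h$ and using the homogeneous interface conditions of $\psi_z$ for integration by parts, one arrives at the representation
\[
(\eta_h,z)_{L^2(\Omega)} \;=\; \langle w,\psi_z\rangle \;+\; \mathcal E_h(w,z),
\]
in which $\mathcal E_h(w,z)$ collects the IFEM consistency terms stemming from the non-conforming pieces $y_{\sigma w}$ and from the mismatch between $a$ and $a_h$ on interface elements. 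Using Theorem~\ref{theo_IFEM}, Theorem~\ref{theo_IFEM-adj}, and the standard IFEM extension-function estimates, $\mathcal E_h$ contributes a term of order $h^2\|u^*\|_{H^{1/2}(\Gamma)}\|z\|_{L^2(\Omega)}$.

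The principal term $\langle w,\psi_z\rangle$ is where the superconvergence of $R_h$ enters. Invoking Assumption~1, I split $\Gamma = K_1\cup K_2$. On each $\Gamma^i\subset K_2$ one has $u^*|_{\Gamma^i}\in H^2(\Gamma^i)$; decompose $\psi_z=R_h\psi_z+(I-R_h)\psi_z$. The piecewise-constant piece $R_h\psi_z$ reduces the integrand to $w$ tested against a constant, so \eqref{interh} gives $\bigl|\int_{\Gamma^i}w\,R_h\psi_z\bigr|\leq Ch^2|\Gamma^i|^{1/2}\|u^*\|_{H^2(\Gamma^i)}\|\psi_z\|_{L^\infty(\Gamma^i)}$; summing by Cauchy--Schwarz and using the $L^\infty$-bound \eqref{bounded2} on $\psi_z|_\Gamma$ produces $O(h^2)\|z\|_{L^2(\Omega)}$. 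The complementary piece is handled via Cauchy--Schwarz together with $\|(I-R_h)\psi_z\|_{L^2(\Gamma)}\leq Ch\|\psi_z\|_{H^1(\Gamma)}\leq Ch\|z\|_{L^2(\Omega)}$ from \eqref{bounded5}, again yielding $O(h^2)$. On the rough set $K_1$, Assumption~1 gives $|K_1|\leq Ch$; using the coarser bound \eqref{interh1} with $L^\infty$ controls on $u^*$ and $\psi_z|_\Gamma$, the $O(h)$ midpoint-interpolation error is upgraded to $O(h\cdot|K_1|)=O(h^2)$ after summation.

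The main obstacle is the simultaneous book-keeping of two delicate contributions: the IFEM non-conformity remainders $\mathcal E_h(w,z)$ (which arise because $\eta_h$ lives in the enlarged space $\widetilde H_h(\Omega)$ rather than a conforming Galerkin subspace of $H^1_0(\Omega)$), and the regularity loss on $K_1$, which must be compensated exactly by the measure bound $|K_1|\leq Ch$ in order to avoid degrading the rate to $O(h^{3/2})$ as in \eqref{theo1}--\eqref{theo3}. Assumption~1 is precisely what makes this balance possible.
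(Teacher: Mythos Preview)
Your overall strategy coincides with the paper's: first reduce the left-hand side to $\|S_h(u^*-R_hu^*)\|_{L^2(\Omega)}$ via the boundedness of $A_h^*$ (the paper does this by coercivity and the trace theorem, your route via \eqref{bounded3} is equivalent), and then estimate this quantity by duality together with the superconvergence of $R_h$ under Assumption~1.

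The essential difference is in the duality step. The paper pairs $S_hw$ against $z$ using the \emph{discrete} adjoint $A_h^*z$ and obtains the clean identity
\[
(S_hw,z)=\langle w,\,A_h^*z\rangle
\]
(see \eqref{I3-eq3}--\eqref{I3-eq4}); no IFEM consistency remainder $\mathcal E_h$ ever appears, because the discrete state and discrete adjoint are exactly compatible. You instead introduce the \emph{continuous} adjoint $\psi_z=A^*z$, which forces you to carry $\mathcal E_h(w,z)$. This is where your argument has a genuine gap: you claim $\mathcal E_h$ is $O(h^2)$ by invoking Theorems~\ref{theo_IFEM} and~\ref{theo_IFEM-adj}, but those bounds require $\|w\|_{H^{1/2}(\Gamma)}$ and $\|\tilde y_w\|_{H^3(N(\Gamma,\rho))}$ on the right-hand side. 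Since $R_hu^*$ is piecewise constant, $w=u^*-R_hu^*$ is not in $H^{1/2}(\Gamma)$, and the required $H^3$-regularity of $\tilde y_w$ is not available either. Switching to the discrete dual, as the paper does, removes this obstacle entirely and makes the proof shorter.

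On the other hand, your treatment of the principal term is more careful than the paper's. The paper estimates $\langle w,A_h^*z\rangle$ by the single global bound $\bigl|\int_\Gamma w\,d\Gamma\bigr|\cdot\|A_h^*z\|_{L^\infty(\Gamma)}$ (see \eqref{J-eq1}), which is not a valid H\"older inequality as written. Your elementwise decomposition $\psi_z=R_h\psi_z+(I-R_h)\psi_z$, combined with \eqref{interh}, \eqref{interh1} on $K_1,K_2$ and the bounds \eqref{bounded2}, \eqref{bounded5}, is the rigorous way to extract the $O(h^2)$ rate. The cleanest fix is therefore to run your decomposition argument, but with $A_h^*z$ in place of $\psi_z$; then $\mathcal E_h$ disappears and the regularity concern is moot.
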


\begin{proof}
Note that for all $v_h\in H_h(\Omega)$, the coercivity of the bilinear form with adjoint state weak form $a_h(p_h,v_h )=(S_hu^*-S_hR_h u^*, v_h)$, and the Cauchy-Schwarz inequality, we take $v_h=A^*_h(S_hu^*-S_hR_h u^*)$ and note that $p_h=A_h^*(S_hu^*-S_hR_h u^*)$ to obtain
$$
\|A^*_h(S_hu^*-S_hR_h u^*)\|_{H^1(\Omega)}^2\leq Ca_h(A^*_h(S_hu^*-S_hR_h u^*), A^*_h(S_hu^*-S_hR_h u^*))
$$
$$
\leq C\|S_hu^*-S_hR_h u^*\|_{L^2(\Omega)}\|A^*_h(S_hu^*-S_hR_h u^*)\|_{H^1(\Omega)},
$$
thus we have$\|A^*_h(S_hu^*-S_hR_h u^*)\|_{H^1(\Omega)}\leq \|S_hu^*-S_hR_h u^*\|_{L^2(\Omega)}$. Since
$\|A^*_h(S_hu^*-y_d)-A^*_h(S_hR_h u^*-y_d)\|_{L^{2}(\Gamma)}
=\|A^*_h(S_hu^*-S_h R_h u^*)\|_{L^{2}(\Gamma)}.$ Applying trace theorem\cite{brenner2008mathematical} on subdomains to obtain
\begin{equation}\label{I3-eq2}
\|A^*_h(S_hu^*-S_hR_h u^*)\|_{L^{2}(\Gamma)}\leq C\|A^*_hS_h\left(u^*-R_hu^*\right)\|_{H^{1}(\Omega)}\leq C\|S_h(u^*-R_h u^*)\|_{L^2(\Omega)}.
\end{equation}

Noting that for all $u\in L^2(\Gamma)$, and $z\in L^2(\Omega)$, using the weak forms (\ref{semeq}) of state  and the weak forms (\ref{dis-adj}) of the corresponding adjoint state, it is shown that
\begin{equation}\label{I3-eq3}
(S_hu,z)=(f, A_h^*z)+\langle u, A_h^*z \rangle-\sum_{T\in\mathcal{T}_h^{int}}\sum_{s=\pm}\int_{T\cap\Omega^{s}}\beta\nabla y_{\sigma u}\nabla v_h dx+(y_{\sigma u},z).
\end{equation}

Equation (\ref{I3-eq3}) with $u=R_h u^*$, we have
\begin{equation}\label{new1}
(S_hR_hu^*,z)=(f, A_h^*z)+\langle R_hu^*, A_h^*z \rangle-\sum_{T\in\mathcal{T}_h^{int}}\sum_{s=\pm}\int_{T\cap\Omega^{s}}\beta\nabla y_{\sigma u}\nabla v_h dx+(y_{\sigma u},z).
\end{equation}
Subtracting the above equation by taking $u=u^*$ in equation (\ref{I3-eq3}), we obtain
\begin{equation}\label{I3-eq4}
\begin{aligned}
(S_h(u^*-R_h u^*),z)&=\langle u^*-R_hu^*, A_h^*z \rangle.
\end{aligned}
\end{equation}

Then, we estimate the right hand side of (\ref{I3-eq2}), with $z=S_h(u^*-R_h u^*)$, using (\ref{bounded2}) of Lemma~\ref{bounded}, we have
\begin{equation}\label{J-eq1}
\begin{aligned}
&\|S_h(u^*-R_h u^*)\|_{L^2(\Omega)}^2\\
&\quad=\langle u^*-R_hu^*, A_h^*S_h(u^*-R_h u^*)\rangle \\
&\quad\leq  \left|\int_\Gamma (u^*-R_h u^*)d\Gamma\right|\|A_h^*S_h(u^*-R_h u^*)\|_{L^{\infty}(\Gamma)}  \\
&\quad\leq C\|S_h(u^*-R_h u^*)\|_{L^{2}(\Omega)}\left|\int_\Gamma (u^*-R_h u^*)d\Gamma\right|.
\end{aligned}
\end{equation}
Using the Assumption 1, $\left(\ref{interh}\right)$ and $\left(\ref{interh1}\right)$, we have
\begin{equation}\label{J1}
\begin{aligned}
&\left|\int_\Gamma (u^*-R_h u^*)d\Gamma\right| \\
\quad&\leq\sum\limits_{\Gamma^i\in \Gamma, \Gamma^i\subset K_1}\left|\int_{\Gamma^i}(u^*-R_h u^*)d\Gamma^i\right|
+\sum\limits_{\Gamma^i\in \Gamma, \Gamma^i\subset K_2}\left|\int_{\Gamma^i}(u^*-R_h u^*)d\Gamma^i\right|\\
\quad&\leq C\left[\sum\limits_{\Gamma^i\in \Gamma, \Gamma^i\subset K_1}h|\Gamma^i|\|u^*\|_{W^{1,\infty}(\Gamma^i)}
+\sum\limits_{\Gamma^i\in \Gamma, \Gamma^i\subset K_2}h^2|\Gamma^i|^{1/2}\|u^*\|_{H^2(\Gamma^i)}\right]\\
\quad&\leq C\left[h|K_1|\|u^*\|_{W^{1,\infty}(K_1)}
+h^2|K_2|^{1/2}\|u^*\|_{H^2(K_2)}\right]\\
\quad&\leq C\left[h^2\|u^*\|_{W^{2,\infty}(\Gamma)}
+h^2\|u^*\|_{H^2(\Gamma)}\right]\\
\quad&\leq Ch^2\left[\|p^*\|_{W^{2,\infty}(\Gamma)}
+\|p^*\|_{H^2(\Gamma)}\right].
\end{aligned}
\end{equation}

Note that the embedding $W^{2,\infty}(\Gamma)\rightarrow H^2(\Gamma)$, a regularity results of more general situations in \cite{li2010analysis} (also see Lemma 2.4 in \cite{apel2015finite}), and the embedding $H^2(\Omega^s)\rightarrow C^{0,\lambda}(\overline{\Omega}^s)(s=\pm, 0<\lambda<1)$, we have
\begin{equation}\label{J123}
\begin{aligned}
&\|p^*\|_{H^2(\Gamma)}\leq C\|p^*\|_{W^{2,\infty}(\Gamma)}\leq C(\|p^*\|_{W^{2,\infty}(\partial \Omega^+)}+\|p^*\|_{W^{2,\infty}(\partial \Omega^-)})\\
&\leq C\left(\|y^*-y_d\|_{C^{0,\lambda}(\overline{\Omega}^+)}+\|y^*-y_d\|_{C^{0,\lambda}(\overline{\Omega}^-)}\right) \leq C\left( \|y^*\|_{\widetilde{H}^2(\Omega)}+\|y_d\|_{C^{0,\lambda}(\overline{\Omega})}\right).
\end{aligned}
\end{equation}
Thus, we have
\begin{equation}\label{J11}
\left|\int_\Gamma (u^*-R_h u^*)d\Gamma\right|\leq Ch^2 (\|y^*\|_{\widetilde{H}^2(\Omega)}+\|y_d\|_{C^{0,\lambda}(\overline{\Omega})}).
\end{equation}

From (\ref{J-eq1})--(\ref{J11}), we obtain
\begin{equation*}
\begin{aligned}
&\|A^*_h(S_hu^*-y_d)-A^*_h(S_hR_h u^*-y_d)\|_{L^{2}(\Gamma)}\\
&\quad\leq Ch^{2}\left(\|u^*\|_{H^{1/2}(\Gamma)}+\|y_d\|_{C^{0,\lambda}(\overline{\Omega})}+\|f\|_{L^2(\Omega)}
%{\color{blue}{+\|\tilde{y}_{u^*}\|_{H^3\left(N\left(\Gamma,\rho\right)\right)}}}
\right).
\end{aligned}
\end{equation*}
\end{proof}

\begin{lemma}\label{lm4-3}
Let Assumption 1 hold, then the following estimate
\begin{equation}\label{ineq3}
\begin{aligned}
&\|p^*-A^*_h(S_hR_h u^*-y_d)\|_{L^{2}(\Gamma)}\\
&\quad\leq Ch^{3/2}\left(\|u^*\|_{H^{1/2}(\Gamma)}+\|y_d\|_{C^{0,\lambda}(\overline{\Omega})}+\|f\|_{L^2(\Omega)}+\|\tilde{y}_{u^*}\|_{H^3\left(N\left(\Gamma,\rho\right)\right)}\right)
\end{aligned}
\end{equation}
is valid.
\end{lemma}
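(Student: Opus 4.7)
The plan is to prove this by a simple triangle-inequality decomposition that peels off two already-controlled errors. Writing $p^{*}=A^{*}(Su^{*}-y_d)$ and noting that $A^{*}_h(S_h u^{*}-y_d)$ is exactly the quantity denoted $p_h(u^{*})$ in Section~\ref{section3_2}, I would decompose
\begin{equation*}
p^{*}-A^{*}_h(S_h R_h u^{*}-y_d)
= \bigl[\,p^{*}-A^{*}_h(S_h u^{*}-y_d)\,\bigr]
 +\bigl[\,A^{*}_h(S_h u^{*}-y_d)-A^{*}_h(S_h R_h u^{*}-y_d)\,\bigr],
\end{equation*}
and control each bracket separately in $L^{2}(\Gamma)$.

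For the first bracket, observe that with $u=u^{*}$, the continuous adjoint is $p(u^{*})=p^{*}$ and its IFEM analogue is $p_h(u^{*})=A^{*}_h(S_h u^{*}-y_d)$. Thus this bracket is literally $p(u^{*})-p_h(u^{*})$, and the sharp trace estimate \eqref{p-gamal2} of Theorem~\ref{theo_IFEM-adj} immediately gives
\begin{equation*}
\|p^{*}-A^{*}_h(S_h u^{*}-y_d)\|_{L^{2}(\Gamma)}
\leq Ch^{3/2}\bigl(\|y_d\|_{L^{2}(\Omega)}+\|f\|_{L^{2}(\Omega)}+\|u^{*}\|_{H^{1/2}(\Gamma)}+\|\tilde{y}_{u^{*}}\|_{H^{3}(N(\Gamma,\rho))}\bigr).
\end{equation*}
I would then absorb $\|y_d\|_{L^{2}(\Omega)}\leq C\|y_d\|_{C^{0,\lambda}(\overline{\Omega})}$ (since $\Omega$ is bounded) so the right-hand side has the form stated in the lemma.

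The second bracket is exactly the quantity estimated in the preceding Lemma~\ref{lm4-2}, which gives an $O(h^{2})$ bound in $L^{2}(\Gamma)$ under Assumption~1. Since $h^{2}\leq Ch^{3/2}$ for small $h$, combining the two bounds via the triangle inequality produces the desired $O(h^{3/2})$ estimate with the required norms on the right-hand side.

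The real work has already been done in Theorem~\ref{theo_IFEM-adj} and Lemma~\ref{lm4-2}, so I do not anticipate a serious obstacle here — the only delicate point is making sure the first bracket really matches the hypothesis of Theorem~\ref{theo_IFEM-adj} with $u=u^{*}$ (so that $\tilde{y}_{u^{*}}$ appears, and no extra regularity of $u^{*}$ beyond $H^{1/2}(\Gamma)$ is needed), and converting the $\|y_d\|_{L^{2}}$ term into the $C^{0,\lambda}$ norm to unify the right-hand sides from the two bounds. The rate $h^{3/2}$ is governed entirely by the first bracket, i.e.\ by the flux-jump duality argument already carried out in the proof of Theorem~\ref{theo_IFEM-adj}; the $L^{2}(\Gamma)$ loss of half a power relative to the interior $L^{2}(\Omega)$ rate is therefore unavoidable and shows up as the sharp rate of this lemma.
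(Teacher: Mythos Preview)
Your proposal is correct and follows essentially the same route as the paper: the paper also splits $p^{*}-A^{*}_h(S_hR_h u^{*}-y_d)$ into the two pieces $I_1=\|p^{*}-A^{*}_h(S_hu^{*}-y_d)\|_{L^{2}(\Gamma)}$ and $I_2=\|A^{*}_h(S_hu^{*}-y_d)-A^{*}_h(S_hR_hu^{*}-y_d)\|_{L^{2}(\Gamma)}$, bounds $I_1$ by Theorem~\ref{theo_IFEM-adj} and $I_2$ by Lemma~\ref{lm4-2}, and then absorbs $\|y_d\|_{L^{2}(\Omega)}$ into $\|y_d\|_{C^{0,\lambda}(\overline{\Omega})}$ via the embedding $C^{0,\lambda}(\overline{\Omega})\hookrightarrow L^{2}(\Omega)$.
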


\begin{proof}
We are going to introduce intermediate function and use the triangle inequality to get
\begin{equation*}
\begin{aligned}
&\|p^*-A^*_h(S_hR_h u^*-y_d)\|_{L^{2}(\Gamma)}\\
&\quad\leq \|p^*-A^*_h(S_hu^*-y_d)\|_{L^{2}(\Gamma)}+\|A^*_h(S_hu^*-y_d)
-A^*_h(S_hR_hu^*-y_d)\|_{L^{2}(\Gamma)}\\
&\quad=I_1+I_2.
\end{aligned}
\end{equation*}

To treat $I_1$, from Theorem~\ref{theo_IFEM-adj}, we obtain
\begin{equation}\label{I1}
\begin{aligned}
I_1&=\|p^*-A^*_h(S_hu^*-y_d)\|_{L^{2}(\Gamma)}\\
&\leq Ch^{3/2}(\|y_d\|_{L^2(\Omega)}+\|f\|_{L^2(\Omega)}+\|u^*\|_{H^{1/2}(\Gamma)}+\|\tilde{y}_{u^*}\|_{H^3\left(N\left(\Gamma,\rho\right)\right)}).
\end{aligned}
\end{equation}

For $I_2$, from Lemma~\ref{lm4-2}, we have
\begin{eqnarray}\label{I3}
I_2\leq Ch^{2}\left(\|u^*\|_{H^{1/2}(\Gamma)}+\|y_d\|_{C^{0,\lambda}(\overline{\Omega})}+\|f\|_{L^2(\Omega)}\right).
\end{eqnarray}

Combining (\ref{I1}) and (\ref{I3}), and noting the embedding $C^{0,\lambda}(\overline{\Omega})\rightarrow L^2(\Omega)$, we have
\begin{equation}
\begin{aligned}
&\|p^*-A^*_h(S_hR_h u^*-y_d)\|_{L^{2}(\Gamma)}\\
&\quad\leq Ch^{3/2}\left(\|u^*\|_{H^{1/2}(\Gamma)}+\|y_d\|_{C^{0,\lambda}(\overline{\Omega})}+\|f\|_{L^2(\Omega)}+\|\tilde{y}_{u^*}\|_{H^3\left(N\left(\Gamma,\rho\right)\right)}\right)
\end{aligned}
\end{equation}
This ends the proof.
\end{proof}

\begin{lemma}{\label{lm4-1}}
The following error estimate
\begin{equation}\label{ineq1}
\begin{aligned}
&\|R_hu^*-\hat{u}_h^*\|_{L^{2}(\Gamma)}\\
&\quad\leq Ch^{3/2}\left(\|u^*\|_{H^{1/2}(\Gamma)}+\|y_d\|_{C^{0,\lambda}(\overline{\Omega})}+\|f\|_{L^2(\Omega)}+\|\tilde{y}_{u^*}\|_{H^3\left(N\left(\Gamma,\rho\right)\right)}\right)
\end{aligned}
\end{equation}
is valid, where $\hat{u}^*_h=u^*_h\circ g_h^{-1}\in U_h$.
\end{lemma}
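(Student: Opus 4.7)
The plan is to combine the pointwise projection characterizations of $R_h u^*$ and $\hat u_h^*$ with a variational-inequality argument that reduces the task to estimating an $R_h$-sampled inner product of the adjoint error against $w := R_h u^* - \hat u_h^*$, which is piecewise constant on $\Gamma$.

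First, on each segment $\Gamma^i$ the pointwise projection formula (\ref{proj_equa}) (valid since $u^* \in C(\Gamma)$ by the regularity result) evaluated at the midpoint gives $R_h u^*|_{\Gamma^i} = \mathcal{P}_{[u_a,u_b]}(-p^*(M_{\Gamma^i})/\alpha)$, while Remark~4 supplies $\hat u_h^*|_{\Gamma^i} = \mathcal{P}_{[u_a,u_b]}(-p_h^*(M_{\Gamma^i})/\alpha)$. Writing the defining variational inequality of each projection on $\Gamma^i$ and testing each with the other value, after subtraction and summation weighted by $|\Gamma^i|$, I would obtain
\begin{equation*}
\alpha\|w\|^2_{L^2(\Gamma)} \leq -\langle R_h(p^*-p_h^*),\, w\rangle_{L^2(\Gamma)}.
\end{equation*}

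Next, I introduce the intermediate adjoint $\tilde p_h^*:=A_h^*(S_hR_hu^*-y_d)$ already used in Lemma~\ref{lm4-3}. Because $w\in U_h$ is piecewise constant, the $L^2(\Gamma)$-projection $P_h$ onto $U_h$ satisfies $\langle v,w\rangle = \langle P_h v, w\rangle$; this yields the decomposition
\begin{equation*}
-\langle R_h(p^*-p_h^*),w\rangle = -\langle p^*-\tilde p_h^*,w\rangle -\langle \tilde p_h^*-p_h^*,w\rangle - \langle (R_h-P_h)(p^*-p_h^*),w\rangle.
\end{equation*}
The crucial observation is that $\tilde p_h^* - p_h^* = A_h^*S_h w$, so by the standard adjoint identity $\langle \tilde p_h^*-p_h^*,w\rangle = \|S_h w\|^2_{L^2(\Omega)}\geq 0$; this term has the favourable sign and can be dropped. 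The first residual is estimated directly by Cauchy--Schwarz and Lemma~\ref{lm4-3}, giving $|\langle p^*-\tilde p_h^*,w\rangle|\leq Ch^{3/2}(\cdots)\|w\|_{L^2(\Gamma)}$. For the third, I would use the standard midpoint-vs-mean estimate $\|(R_h-P_h)v\|_{L^2(\Gamma)}\leq Ch\|v\|_{H^1(\Gamma)}$ (since $\|(R_h-P_h)v\|^2_{L^2(\Gamma^i)} = |\Gamma^i|(v(M_{\Gamma^i})-\bar v_i)^2$ and $|v(M_{\Gamma^i})-\bar v_i|\leq Ch^{1/2}\|v'\|_{L^2(\Gamma^i)}$).

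The main obstacle is then to show $\|p^*-p_h^*\|_{H^1(\Gamma)}\leq Ch^{1/2}(\cdots)$, so that the third residual is of order $h^{3/2}$ and matches the first. I would obtain this by combining the trace theorem on the subdomains $\Omega^\pm$ with interpolation: the bounds $\|p^*-p_h^*\|_{H^1(\Omega)}\leq Ch$ from Theorem~\ref{theo_IFEM-adj} and the broken $H^2(\Omega^\pm)$ control of $p^*$ (together with the vanishing piecewise Hessian of $p_h^*$) yield a broken $\widetilde{H}^{3/2}(\Omega)$ bound of order $h^{1/2}$, whose trace controls $H^1(\Gamma)$. Substituting everything into the starting inequality gives $\alpha\|w\|^2_{L^2(\Gamma)}\leq Ch^{3/2}(\cdots)\|w\|_{L^2(\Gamma)}$, and dividing by $\|w\|_{L^2(\Gamma)}$ produces the advertised bound. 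The geometric pullback corrections (Jacobian $\tau_h$, map $g_h$) contribute only lower-order terms that can be absorbed throughout; making this rigorous is the other technical point where care is needed.
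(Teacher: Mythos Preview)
Your overall architecture --- combine the two optimality conditions to get $\alpha\|w\|^2$ bounded by an adjoint inner product, split off the intermediate adjoint $\tilde p_h^*=A_h^*(S_hR_hu^*-y_d)$, use Lemma~\ref{lm4-3} for the $p^*-\tilde p_h^*$ piece, and observe that $\langle \tilde p_h^*-p_h^*,w\rangle=\|S_hw\|_{L^2(\Omega)}^2\ge 0$ has the good sign --- is exactly what the paper does (these are its $H_2$ and $H_3$). The substantive difference is in the remaining ``interpolation'' residual, and there your argument has a real gap.

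By starting from the pointwise projection formula (Remark~4) rather than the variational inequality (\ref{dopts3}), you pick up $R_hp_h^*$ instead of $p_h^*$, which forces you to control $\langle (R_h-P_h)(p^*-p_h^*),w\rangle$. Your proposed bound $\|p^*-p_h^*\|_{H^1(\Gamma)}\le Ch^{1/2}$ via an $\widetilde{H}^{3/2}$ interpolation argument does not go through: the piecewise-linear function $p_h^*$ is not in $H^{3/2}$ on either subdomain (elementwise vanishing Hessian does not help across element edges), so the stated interpolation between $H^1$ and $H^2$ is unavailable. Moreover $p_h^*=A_h^*(S_h\hat u_h^*-y_d)$ depends on $\hat u_h^*$, so bounding $\|p^*-p_h^*\|_{H^1(\Gamma)}$ independently of $w$ is circular. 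The paper sidesteps both issues by testing (\ref{dopts3}) directly with $R_h^\circ u^*$: then the residual is $\langle R_h^\circ p^*-p^*, u_h^*-R_h^\circ u^*\rangle_{\Gamma_h}$, involving only the \emph{smooth} function $p^*$. Since $w$ is piecewise constant, this becomes a sum of midpoint-rule errors $\int_{\Gamma_h^i}(R_h^\circ p^*-p^*)$, which (\ref{interh}) bounds by $Ch^2\|p^*\|_{H^2(\Gamma)}$; the required $H^2(\Gamma)$ control of $p^*$ then comes from regularity (cf.~(\ref{J123})). If you wish to keep your decomposition, you would need to split $p^*-p_h^*=(p^*-\tilde p_h^*)+A_h^*S_hw$, absorb the $Ch\|w\|^2$ contribution of the second piece into the left-hand side, and establish $\|p^*-\tilde p_h^*\|_{H^1(\Gamma)}\le Ch^{1/2}$ by the inverse-estimate route used in the proof of (\ref{bounded5}); this is doable but is not the argument you gave.
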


\begin{proof}
Let us recall the continuous and discrete optimality conditions. On the one hand we can use the operator $R_h^\circ$ for the continuous optimality (\ref{opts3}). On the other hand, because the admissible sets for the continuous and discrete problem are different, we have to deal with the discrepancy between the continuous admissible set $U_{ad}$ and the discrete admissible set $U_{ad}^h$, thus we can test the discrete optimality (\ref{dopts3}) with $R_h^\circ u^*\in U_{ad}^h$. It is obvious that the continuous optimality condition (\ref{opts3}) also satisfies in pointwise form, i.e.,
$$
\langle \alpha u^*(x)+ p^*(x), u-u^*(x)\rangle\geq 0,  ~~\forall u\in [u_a, u_b] \mbox{ and for a.a.  } x\in \Gamma.
$$

We take $x$ as the midpoint in each $\Gamma_h^i$, and integral over $\Gamma_h^i$, and sum over all the elements, we obtain $\langle \alpha R_h^\circ u^*(x)+ R_h^\circ p^*(x), R_h^\circ u(x)-R_h^\circ u^*(x)\rangle_{\Gamma_h}\geq 0,  ~~\forall R_h^\circ u(x)\in U_{ad}^h.$ If we choose $R_h^\circ u=u^*_h$ in the above variational inequalities, we have $\langle \alpha R_h^\circ u^*+ R_h^\circ p^*, u^*_h-R_h^\circ u^*\rangle_{\Gamma_h}\geq 0,$ taking $u=R_h^\circ u^*\in U_{ad}^h$ in (\ref{dopts3}), we have $\langle \alpha u^*_h+ p^*_h, R_h^\circ u^*-u^*_h\rangle_{\Gamma_h}\geq 0,$ by adding these two variational inequalities, we obtain $\langle \alpha (R_h^\circ u^*-u^*_h)+ R_h^\circ p^*-p^*_h, u^*_h-R_h^\circ u^*\rangle_{\Gamma_h}\geq 0.$ This yields
\begin{equation}\label{u-1}
\alpha \|R_h u^*-\hat{u}^*_h\|_{L^{2}(\Gamma)}^2\leq C \|R_h^\circ u^*-u^*_h\|_{L^{2}(\Gamma_h)}^2\leq C\langle R_h^\circ p^*-p^*_h, u^*_h-R_h^\circ u^*\rangle_{\Gamma_h}.
\end{equation}
\indent We will insert appropriate intermediate functions into the right hand side of (\ref{u-1}) to get
\begin{equation}\label{u-2}
\begin{aligned}
&\alpha \|R_hu^*-\hat{u}^*_h\|_{L^{2}(\Gamma)}^2\\
&\quad\leq C \|R_h^\circ u^*-u^*_h\|_{L^{2}(\Gamma_h)}^2\\
&\quad\leq C[\langle R_h^\circ p^*-p^*, u^*_h-R_h^\circ u^*\rangle_{\Gamma_h}
+\langle p^*-A^*_h(S_hR_h u^*-y_d), u^*_h-R_h^\circ u^*\rangle_{\Gamma_h}\\
&\quad\quad+\langle A^*_h(S_hR_h u^*-y_d)-p_h^*, u^*_h-R_h^\circ u^*\rangle_{\Gamma_h}]\\
&\quad=C\left[H_1+H_2+H_3\right].
\end{aligned}
\end{equation}
\indent We will estimate $H_1$, $H_2$ and $H_3$ separately. Since $u^*_h-R_h^\circ u^*$ is constant on each element $\Gamma_h^i$, for $H_1$, we have
\vspace{-0.5cm}
\begin{equation}\label{H1}
\begin{aligned}
H_1&=\sum\limits_{\Gamma_h^i\in \Gamma_h}\int_{\Gamma_h^i}(R_h^\circ p^*-p^*)(u^*_h-R_h^\circ u^*)d\Gamma_h^i=\sum\limits_{\Gamma_h^i\in \Gamma_h}(u^*_h-R_h^\circ u^*)|_{\Gamma_h^i}\int_{\Gamma_h^i}(R_h^\circ p^*-p^*)d\Gamma_h^i\\
&\leq C\|u^*_h-R_h^\circ u^*\|_{L^{2}(\Gamma_h)}\left(\sum\limits_{\Gamma_h^i\in \Gamma_h}\frac{1}{|\Gamma_h^i|}\left(\int_{\Gamma_h^i}(R_h^\circ p^*-p^*)d\Gamma_h^i\right)^2\right)^{1/2}\\
&\leq C\|\hat{u}^*_h-R_hu^*\|_{L^{2}(\Gamma)}\left(\sum\limits_{\Gamma_h^i\in \Gamma_h}\frac{1}{|\Gamma_h^i|}\left(\int_{\Gamma_h^i}(R_h^\circ p^*-p^*)d\Gamma_h^i\right)^2\right)^{1/2}\\
&\leq C\left[\frac{\varepsilon}{2}\|\hat{u}^*_h-R_hu^*\|^2_{L^{2}(\Gamma)}+\frac{1}{2\varepsilon}\sum\limits_{\Gamma_h^i\in \Gamma_h}\frac{1}{|\Gamma_h^i|}\left(\int_{\Gamma_h^i}(R_h^\circ p^*-p^*)d\Gamma_h^i\right)^2\right]\\
&\leq C\left[\frac{\varepsilon}{2}\|\hat{u}^*_h-R_hu^*\|^2_{L^{2}(\Gamma)}+\frac{C}{2\varepsilon}h^4\|p^*\|_{H^{2}\left(\Gamma\right)}\right]\\
&\leq C\left[\frac{\varepsilon}{2}\|\hat{u}^*_h-R_hu^*\|^2_{L^{2}(\Gamma)}+\frac{C}{2\varepsilon}h^4\left(\|y^*\|_{L^{2}(\Omega)}+\|y_d\|_{C^{0,\lambda}(\overline{\Omega})}\right)\right],
\end{aligned}
\end{equation}
where we have used Young's inequality, $\left(\ref{interh}\right)$ and $\left(\ref{adj-regularity}\right)$.

For $H_2$, using the Cauchy-Schwarz inequality and Lemma~\ref{lm4-3}, we have
 \begin{equation}\label{H2}
\begin{aligned}
H_2&\leq C\|p^*-A^*_h(S_hR_h u^*-y_d)\|_{L^{2}(\Gamma)}\|\hat{u}^*_h-R_hu^*\|_{L^{2}(\Gamma)}\\
&\leq \frac{C}{2\varepsilon}\|p^*-A^*_h(S_hR_h u^*-y_d)\|_{L^{2}(\Gamma)}^2+\frac{\varepsilon}{2}\|\hat{u}^*_h-R_hu^*\|_{L^{2}(\Gamma)}^2\\
&\leq \frac{Ch^3}{2\varepsilon}\left(\|u^*\|^2_{H^{1/2}(\Gamma)}+\|y_d\|^2_{C^{0,\lambda}(\overline{\Omega})}+\|f\|^2_{L^2(\Omega)}+\|\tilde{y}_{u^*}\|^2_{H^3\left(N\left(\Gamma,\rho\right)\right)}\right)\\
&\quad+\frac{\varepsilon}{2}\|\hat{u}^*_h-R_hu^*\|_{L^{2}(\Gamma)}^2.
\end{aligned}
\end{equation}

For the last term $H_3$, applying the pull-back operator, using (\ref{I3-eq4}) with $z=S_h(\hat{u}^*_h-R_hu^*)$ and $u^*=\hat{u}^*_h$, we can obtain,
 \begin{equation}\label{H3}
\begin{aligned}
&H_3=\langle A^*_h (S_hR_h u^*-y_d)-A_h^*(S_h\hat{u}^*_h-y_d), (\hat{u}^*_h-R_hu^*)\rangle_{\tau_h} \\
&\leq C\langle A^*_h (S_hR_h u^*-S_h\hat{u}^*_h), \hat{u}^*_h-R_hu^*\rangle\\
&=-C\|S_h(\hat{u}^*_h-R_hu^*)\|^2_{L^2(\Omega)}.
\end{aligned}
\end{equation}

From (\ref{H1}), (\ref{H2}) and (\ref{H3}), and taking appropriate $\varepsilon$, we deduce the desired results.
\end{proof}

Next, we will give the proof of main Theorem~\ref{main-th}.
\begin{proof}
Using appropriate intermediate functions and applying the triangle inequality, we have
$$
\|y^*-y^*_h\|_{L^2(\Omega)}=\|Su^*-S_hu^*+S_hu^*-S_hR_h u^*+S_hR_h u^*-S_h\hat{u}^*_h\|_{L^2(\Omega)}
$$
$$
\leq \|Su^*-S_hu^*\|_{L^2(\Omega)}+ \|S_hu^*-S_hR_h u^*\|_{L^2(\Omega)}+ \|S_hR_h u^*-S_h\hat{u}^*_h\|_{L^2(\Omega)}
$$
$$
\hspace{-9cm}=I_1+I_2+I_3.
$$

For the first term $I_1$, it is associate with the IEFM error estimates for the state equation, thus it is bounded by Theorem~\ref{theo_IFEM}. For the second term $I_2$, it is bounded by (\ref{J-eq1}). For the third one $I_3$, we estimate it by the technique in the proof of (\ref{I3-eq4}), Lemma \ref{bounded} and Lemma \ref{lm4-1}. It is shown that
\begin{equation}\label{y}
\begin{aligned}
&\|y^*-y^*_h\|_{L^2(\Omega)}\\
&\quad\leq  Ch^{3/2}\left(\|u^*\|_{H^{1/2}(\Gamma)}+\|y_d\|_{C^{0,\lambda}(\overline{\Omega})}+\|f\|_{L^2(\Omega)}+ \|\tilde{y}_{u^*}\|_{H^3\left(N\left(\Gamma,\rho\right)\right)}\right).
\end{aligned}
\end{equation}

The error of the adjoint state on the interface and in the domain can be bounded by
\begin{equation}\label{y1}
\begin{aligned}
&\|p^*-p^*_h\|_{L^2(\Omega)}+\|p^*-p^*_h\|_{L^{2}(\Gamma)}  \\
&\quad\leq\|p^*-p^*_h(u)\|_{L^2(\Omega)}+\|A_h^*\left(y^*_h(u)-y_d\right)-A_h^*\left(y^*_h-y_d\right)\|_{L^2(\Omega)}\\
&\quad\quad+\|p^*-A_h^*(S_hR_h u^*-y_d)\|_{L^{2}(\Gamma)}+\|A_h^*(S_hR_h u^*-y_d)-A_h^*(S_h\hat{u}_h ^*-y_d)\|_{L^{2}(\Gamma)}\\
&\quad\leq\|p^*-p^*_h(u)\|_{L^2(\Omega)}+\|y^*_h(u)-y^*\|_{L^2(\Omega)}+\|y^*-y^*_h\|_{L^2(\Omega)}\\
&\quad\quad+\|p^*-A_h^*(S_hR_h u^*-y_d)\|_{L^{2}(\Gamma)}+\|S_hR_h u^*-S_h\hat{u}_h ^*\|_{L^{2}(\Omega)},
\end{aligned}
\end{equation}
where we used Lemma \ref{bounded}. Applying Theorem~\ref{theo_IFEM-adj}, Theorem~\ref{theo_IFEM}, Lemma~\ref{lm4-3}, Lemma~\ref{lm4-1} and noting
(\ref{y}), we have
\begin{eqnarray}\label{p}
\|p^*-p^*_h\|_{L^{2}(\Gamma)}+\|p^*-p^*_h\|_{L^2(\Omega)} \nonumber\\ \leq Ch^{3/2}\left(\|u^*\|_{H^{1/2}(\Gamma)}+\|y_d\|_{C^{0,\lambda}(\overline{\Omega})}+\|f\|_{L^2(\Omega)}+\|\tilde{y}_{u^*}\|_{H^3\left(N\left(\Gamma,\rho\right)\right)}\right).
\end{eqnarray}

For control $u^*$, its discrete counterparts as introduced in Section 3 is $u^*_h$, however, we define the projection $\tilde{u}^*_h$ of $p^*_h$ by
\begin{equation}\label{postprocessing}
\tilde{u}^*_h=\mathcal{P}_{[u_a,u_b]}\left(-\frac{1}{\alpha}p^*_h|_{\Gamma}\right).
\end{equation}

We note that the function $\tilde{u}^*_h$ is piecewise linear and continuous, and it does not belong to the space $U_{ad}^h$. In the following, we will show the approximation between $\tilde{u}^*_h$ and $u^*$. Using the projection operator, and together with (\ref{p}),  we have
\begin{equation}\label{p1}
\begin{aligned}
\alpha \|u^*-\tilde{u}^*_h\|_{L^{2}(\Gamma)}&=\alpha\left\|\mathcal{P}_{[u_a,u_b]}\left(-\frac{1}{\alpha}p^*\right)
-\mathcal{P}_{[u_a,u_b]}\left(-\frac{1}{\alpha}p^*_h|_{\Gamma}\right)\right\|_{L^{2}(\Gamma)} \\
&\leq C \|p^*-p^*_h\|_{L^{2}(\Gamma)}\\
&\leq Ch^{3/2}\left(\|u^*\|_{H^{1/2}(\Gamma)}+\|y_d\|_{C^{0,\lambda}(\overline{\Omega})}+\|f\|_{L^2(\Omega)}+\|\tilde{y}_{u^*}\|_{H^3\left(N\left(\Gamma,\rho\right)\right)}\right).
\end{aligned}
\end{equation}
\end{proof}

If we use the variational discretization concept first proposed in \cite{hinze2005variational} for distributed control problem, i.e. $U_{ad}^h=U_{ad}$, we can prove the following error estimates in a similar way.
\begin{theorem}\label{main-th1}
Let $(u^*,y^*,p^*)$ and $(u_h^*,y_h^*,p_h^*)$ be the solutions of the problems ($\mathbf{P}$) and ($\mathbf{P_h}$) with $U_{ad}^h=U_{ad}$ respectively. Then there exists a constant $C>0$, independent of $h$, such that
\begin{equation}\label{theo4}
\|u^*-u^*_h\|_{L^{2}(\Gamma)}\leq Ch^{3/2}\left(\|u^*\|_{H^{1/2}(\Gamma)} +\|y_d\|_{L^{2}(\Omega)}+\|f\|_{L^2(\Omega)}+\|\tilde{y}_{u^*}\|_{H^3\left(N\left(\Gamma,\rho\right)\right)}\right).
\end{equation}
\end{theorem}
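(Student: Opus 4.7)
The plan is to mimic the standard two-inequality argument for variational discretization, exploiting the crucial simplification that $U_{ad}^h = U_{ad}$. Because both $u^*$ and $u_h^*$ are feasible for the other's variational inequality, I would test (\ref{opts3}) with $u = u_h^*$ and the (now $\Gamma$-based) discrete condition with $u = u^*$, then add, to obtain
\begin{equation*}
\alpha \|u^* - u_h^*\|_{L^2(\Gamma)}^2 \;\leq\; \langle p_h^* - p^*,\, u^* - u_h^* \rangle,
\end{equation*}
where $p^* = A^*(Su^* - y_d)$ and $p_h^* = A_h^*(S_h u_h^* - y_d)$. No piecewise-constant projection $R_h^\circ$ enters, so Assumption 1 and the $C^{0,\lambda}$-regularity of $y_d$ used in Theorem~\ref{main-th} can be dispensed with, and $\|y_d\|_{L^2(\Omega)}$ suffices in the final bound, matching (\ref{theo4}).

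Next I would insert the intermediate adjoint $\tilde p_h := A_h^*(S_h u^* - y_d)$ and split
\begin{equation*}
\langle p_h^* - p^*,\, u^* - u_h^* \rangle = \langle \tilde p_h - p^*,\, u^* - u_h^* \rangle + \langle p_h^* - \tilde p_h,\, u^* - u_h^* \rangle.
\end{equation*}
For the second term I would use that $y_{\sigma u}$ depends linearly on $u$, so applying (\ref{I3-eq3}) with $u = u^*$ and $u = u_h^*$ and subtracting yields the analogue of (\ref{I3-eq4}), namely $(S_h u^* - S_h u_h^*, z) = \langle u^* - u_h^*, A_h^* z\rangle$; choosing $z = S_h(u^* - u_h^*)$ gives
\begin{equation*}
\langle p_h^* - \tilde p_h,\, u^* - u_h^* \rangle = -\|S_h(u^* - u_h^*)\|_{L^2(\Omega)}^2 \;\leq\; 0,
\end{equation*}
so this term is discarded. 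The first term is purely an adjoint-approximation quantity, and Cauchy--Schwarz followed by Young's inequality absorbs one factor of $\|u^* - u_h^*\|_{L^2(\Gamma)}$ into the left-hand side, producing $\|u^* - u_h^*\|_{L^2(\Gamma)} \leq \alpha^{-1} \|p^* - \tilde p_h\|_{L^2(\Gamma)}$.

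The remaining step is to invoke the sharp trace estimate (\ref{p-gamal2}) of Theorem~\ref{theo_IFEM-adj}, which is exactly the $L^2(\Gamma)$ error between $p^*$ and $A_h^*(S_h u^* - y_d)$ at rate $h^{3/2}$, with constant depending on $\|f\|_{L^2(\Omega)} + \|u^*\|_{H^{1/2}(\Gamma)} + \|\tilde y_{u^*}\|_{H^3(N(\Gamma,\rho))} + \|y_d\|_{L^2(\Omega)}$. Combining this with the estimate above delivers (\ref{theo4}) immediately.

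The substantive work has already been done upstream: the key obstacle is the non-standard $h^{3/2}$ convergence on $\Gamma$ for the IFE adjoint (Theorem~\ref{theo_IFEM-adj}), since trace theory alone would only give $h$. Once that estimate is in hand, the present theorem is essentially one page of standard convex-analytic bookkeeping, and no pointwise or postprocessing arguments analogous to Lemma~\ref{lm4-1} are required because the discrete controls already live in $U_{ad}$ rather than in a piecewise-constant subspace.
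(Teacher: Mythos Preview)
Your proposal is correct and follows the same overall approach as the paper: test (\ref{opts3}) with $u_h^*$, test (\ref{dopts3}) with $u^*$, add, and reduce to the $L^2(\Gamma)$ adjoint error controlled by Theorem~\ref{theo_IFEM-adj}. The one genuine difference is that the paper, after reaching $\alpha\|u^*-u_h^*\|_{L^2(\Gamma)}^2 \le \langle p_h^*-p^*, u^*-u_h^*\rangle$, applies Cauchy--Schwarz and then simply invokes Theorem~\ref{theo_IFEM-adj} for $\|p^*-p_h^*\|_{L^2(\Gamma)}$, glossing over the fact that $p_h^* = A_h^*(S_h u_h^* - y_d)$ depends on $u_h^*$ rather than $u^*$. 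Your insertion of the intermediate adjoint $\tilde p_h = A_h^*(S_h u^* - y_d)$ and the observation that $\langle p_h^* - \tilde p_h, u^* - u_h^*\rangle = -\|S_h(u^*-u_h^*)\|_{L^2(\Omega)}^2 \le 0$ via (\ref{I3-eq4}) is precisely what is needed to make that step rigorous, so your argument is in fact a cleaner version of the paper's.
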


\begin{proof}
It suffices to set $u=u_h^*\in U_{ad}$ in the continuous optimality (\ref{opts3}) and $u=u^*\in U_{ad}$ in discrete optimality (\ref{dopts3}) and add the corresponding inequalities. This directly gives
$$
 \alpha\|u^*-u^*_h\|_{L^{2}(\Gamma)}^2=\alpha \langle u^*-u^*_h, u^*-u^*_h\rangle=\alpha \langle u^*, u^*-u^*_h\rangle-\alpha \langle u^*_h, u^*-u^*_h\rangle
$$
$$
\leq\langle p^*_h-p^*, u^*-u^*_h\rangle \leq \|p^*-p^*_h\|_{L^{2}(\Gamma)}\|u^*-u^*_h\|_{L^{2}(\Gamma)}.
$$

Using Theorem~\ref{theo_IFEM-adj}, we have
$$
\|u^*-u^*_h\|_{L^{2}(\Gamma)}\leq Ch^{3/2} \left(\|u^*\|_{H^{1/2}(\Gamma)}+\|y_d\|_{L^2(\Omega)}+\|f\|_{L^2(\Omega)}+\|\tilde{y}_{u^*}\|_{H^3\left(N\left(\Gamma,\rho\right)\right)}\right).
$$
\end{proof}

\section{Numerical experiments}

In this section, we give an example that includes both control without constraints and with constraints to validate our theoretical analysis, it is a waterdrop-shaped interface $\Gamma(x_{1},x_{2})=\frac{9}{4}\left(x_{1}^{2}+x_{2}^{2}\right)^{2}-2x_{1}\left(x_{1}^{2}+x_{2}^{2}\right)+3x_{2}^{2}$. We set the calculation domain is $\Omega=[-1,1] \times[-1,1]$, the coefficient $\beta^{+}=10, \beta^{-}=1$, and the regularity parameter $\alpha=1$. The iteration algorithm used is the fixed-point iteration.

The algorithm is as follows:
\vspace{-3mm}
\begin{algorithm}
	\caption{ Algorithm for the solution of optimal control problem.}
	\label{alg:Framwork}
	\begin{algorithmic}[1] %这个1 表示每一行都显示数字
		%\STATE Find the expression of the adjoint equation;
		\label{ code:fram:extract }%对此行的标记，方便在文中引用算法的某个步骤
		\STATE Provide an initial $u_{h0}$ of the control function $u_h$;
		\label{code:fram:trainbase}
		\STATE Solve the equation of state in $y_h$ using the above $u_{h0}$;
		\label{code:fram:add}
		\STATE Solve the adjoint equation for $p_h$, being known $y_h$ and $y_d$;
		\label{code:fram:1classify}
		\STATE Compute the new control function $u_{h1}$ using the above $p_h$;
		\label{code:fram:select}
		\STATE If $|u_{h0}-u_{h1}|\leq1.0\times10^{-15}$, setting $u_h=u_{h1}$, else setting $u_{h0}=u_{h1}$ and goto step 2; % 算法的返回值
		\label{code:fram:classify}
		\STATE Take the last computed control function $u_h$ to compute the $y_h$ and $p_h$.
	\end{algorithmic}
\end{algorithm}

\vspace{-3mm}
According to \cite{pinnau2008optimization}, this algorithm is convergent if the parameter $\alpha$ is large enough.

For the error functional, the experimental order of convergence is defined by
\begin{align*}
	\text{Order}=\frac{\log E(h_1)-\log E(h_2)}{\log h_1-\log h_2}.
\end{align*}

%We give a waterdrop-shaped interface $\Gamma(x_{1},x_{2})=\frac{9}{4}\left(x_{1}^{2}+x_{2}^{2}\right)^{2}-2x_{1}\left(x_{1}^{2}+x_{2}^{2}\right)+3x_{2}^{2}$.

{\bf Case 1} The exact solution $y^{*}$ is constructed with a nonhomogeneous boundary condition and the control is considered without constraint. The optimal triple $\left(y^{*}, p^{*}, u^{*}\right)$ is given by
\begin{equation*}
	\begin{aligned}
		&y^{*}\left(x_{1}, x_{2}\right)= \begin{cases}&\hspace{-3mm}\frac{9}{4}\left(x_{1}^{2}+x_{2}^{2}\right)^{2}+3x_{2}^{2}, ~ if \left(x_{1}, x_{2}\right) \in \Omega^{+}, \\
			&\hspace{-3mm}2x_{1}\left(x_{1}^{2}+x_{2}^{2}\right), ~ if \left(x_{1}, x_{2}\right) \in \Omega^{-} .\end{cases} \\
		&p^{*}\left(x_{1}, x_{2}\right)= \begin{cases}&\hspace{-3mm}\beta^{-}\left(\frac{9}{4}\left(x_{1}^{2}+x_{2}^{2}\right)^{2}-2x_{1}\left(x_{1}^{2}+x_{2}^{2}\right)+3x_{2}^{2}\right) \left(x_{1}^{2}-1\right) \left(x_{2}^{2}-1\right),\\
 & \hspace{6cm} if \left(x_{1}, x_{2}\right) \in \Omega^{+}, \\
			&\hspace{-3mm}\beta^{+}\left(\frac{9}{4}\left(x_{1}^{2}+x_{2}^{2}\right)^{2}-2x_{1}\left(x_{1}^{2}+x_{2}^{2}\right)+3x_{2}^{2}\right) \left(x_{1}^{2}-1\right) \left(x_{2}^{2}-1\right), \\
  & \hspace{6cm}if \left(x_{1}, x_{2}\right) \in \Omega^{-} .\end{cases}
	\end{aligned}
\end{equation*}
\begin{equation*}
	u^{*}\left(x_{1}, x_{2}\right)=0, ~~for~\left(x_{1}, x_{2}\right) \in \Gamma.
\end{equation*}
%\begin{equation*}
%	u^{*}\left(x_{1}, x_{2}\right)=\max(0,\frac{1}{2}\sin(\pi x_{1})-\sin(\frac{\pi}{2} x_{2} )), ~~for~\left(x_{1}, x_{2}\right) \in \Gamma.
%\end{equation*}

From this, the functions $f, g, y_{d}$ can be determined by optimality system. An illustration of the geometry of the interface shape is depicted in Figure $\ref{interfaceshuidi}$.
\begin{figure}[ht]
	\centering
	\includegraphics[width=0.33\linewidth]{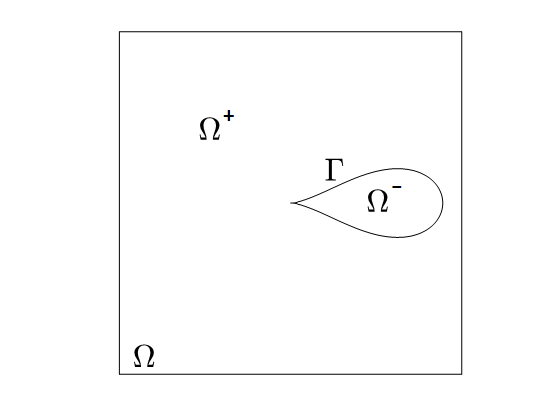}
	\caption{~The geometry of interface shape.}
	\label{interfaceshuidi}
\end{figure}

\begin{table}[ht]
	\centering
	\footnotesize
	\begin{center}
		\caption
		{\footnotesize The $L^{2}$ error and convergence order of the state $y^{*}$, the adjoint state $p^{*}$ and the control $u^{*}$ (without constraint).}
		\begin{tabular}{ccccccc}
			\hline
			$N$  &$||y^{*}-y^{*}_h||_{L^{2}(\Omega)}$  &$Order$  &$||p^{*}-p^{*}_h||_{L^{2}(\Omega)}$  &$Order$  &$||u^{*}-u^{*}_h||_{L^{2}(\Gamma)}$  &$Order$\\
			\hline
			32  &1.9091E-02  &\  &7.0139E-03  &\  &7.5853E-03  \ \\
			
			64  &4.6808E-03  &2.0280  &1.6853E-03  &2.0572  &2.3373E-03  &1.6984 \\
			
			128 &9.0432E-04  &2.3719  &3.7958E-04  &2.1505  &5.3963E-04  &2.1148\\
			
			256 &2.3197E-04  &1.9629  &9.3614E-05  &2.0196  &1.3689E-04  &1.9790\\
			
			512 &6.5059E-05  &1.8341  &2.2682E-05  &2.0452  &3.3092E-05  &2.0485\\
			\hline
		\end{tabular}
	\end{center}
\end{table}

\begin{table}[ht]
	\centering
	\footnotesize
	\begin{center}
		\caption
		{\footnotesize The $L^{\infty}$ error and convergence order of the state $y^{*}$, the adjoint state $p^{*}$ and the control $u^{*}$ (without constraint).}
		\begin{tabular}{ccccccc}
			\hline
			$N$  &$||y^{*}-y^{*}_h||_{L^{\infty}(\Omega)}$  &$Order$  &$||p^{*}-p^{*}_h||_{L^{\infty}(\Omega)}$  &$Order$  &$||u^{*}-u^{*}_h||_{L^{\infty}(\Gamma)}$  &$Order$\\
			\hline
			32  &1.1029E-01  &\  &5.9435E-02  &\  &1.2408E-02  \ \\
			
			64  &2.5647E-02  &2.1044  &1.5335E-02  &1.9545  &4.1848E-03  &1.5680 \\
			
			128 &5.7656E-03  &2.1533  &4.1636E-03  &1.8809  &9.9563E-04  &2.0715\\
			
			256 &1.5186E-03  &1.9248  &1.1247E-03  &1.8883  &2.4515E-04  &2.0219\\
			
			512 &4.1024E-04  &1.8882  &2.9108E-04  &1.9501  &6.5111E-05  &1.9127\\
			\hline
		\end{tabular}
	\end{center}
\end{table}

Tables 1-2 show the $L^{2}$ error and convergence order, the $L^{\infty}$ error and convergence order of the state $y^{*}$, the adjoint state $p^{*}$ and the control $u^{*}$ without constraint, respectively. We can intuitively obtain that the convergence order of $y^{*}, p^{*}$, $u^{*}$ is second order, which is superior to our theoretical result.

We present some figures to better characterize the numerical result: The numerical solution images of the state, the adjoint state and the control with $N=128$ are displayed in Figures $\ref{13state}$-$\ref{13control}$; The error images of the state, the adjoint state and the control with $N=128$ are displayed in Figures $\ref{13error}$. We can find that the error at the interface is larger than elsewhere, which is in line with common sense.

\begin{figure}[H]
	\centering
	\subfigure[Computed state]{
		\label{Fig.a}
		\includegraphics[width=0.36\textwidth]{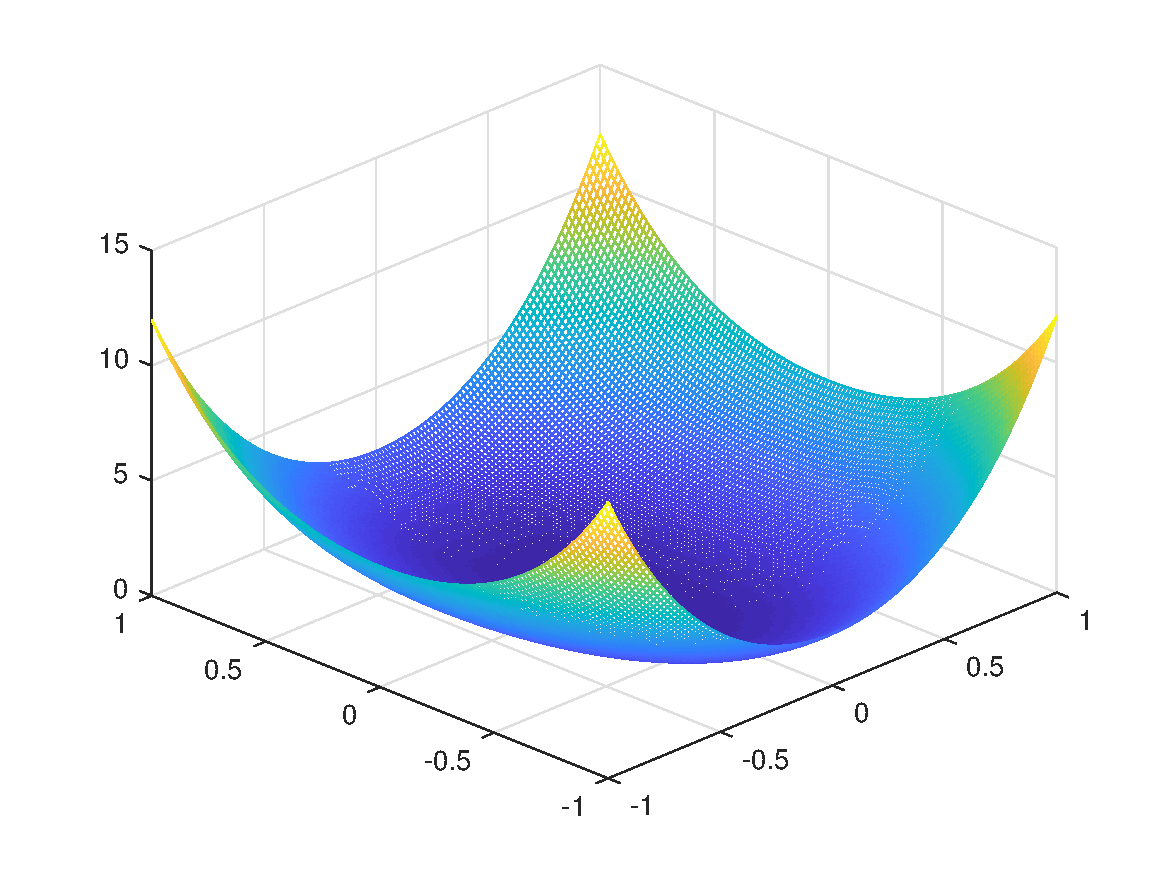}}
	\subfigure[Top view]{
		\label{Fig.b} \includegraphics[width=0.36\textwidth]{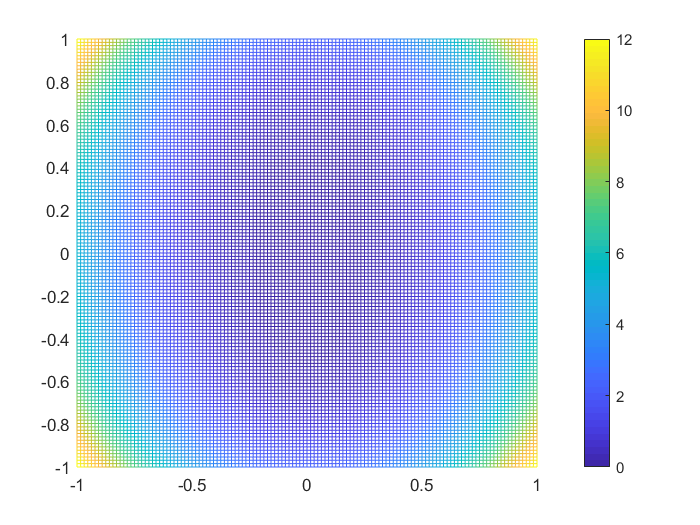}}
	\caption{~The computed state with N=128.}
	\label{13state}
\end{figure}

\begin{figure}[H]
	\centering
	\subfigure[Computed adjoint state]{
		\label{Fig.a1}
		\includegraphics[width=0.36\textwidth]{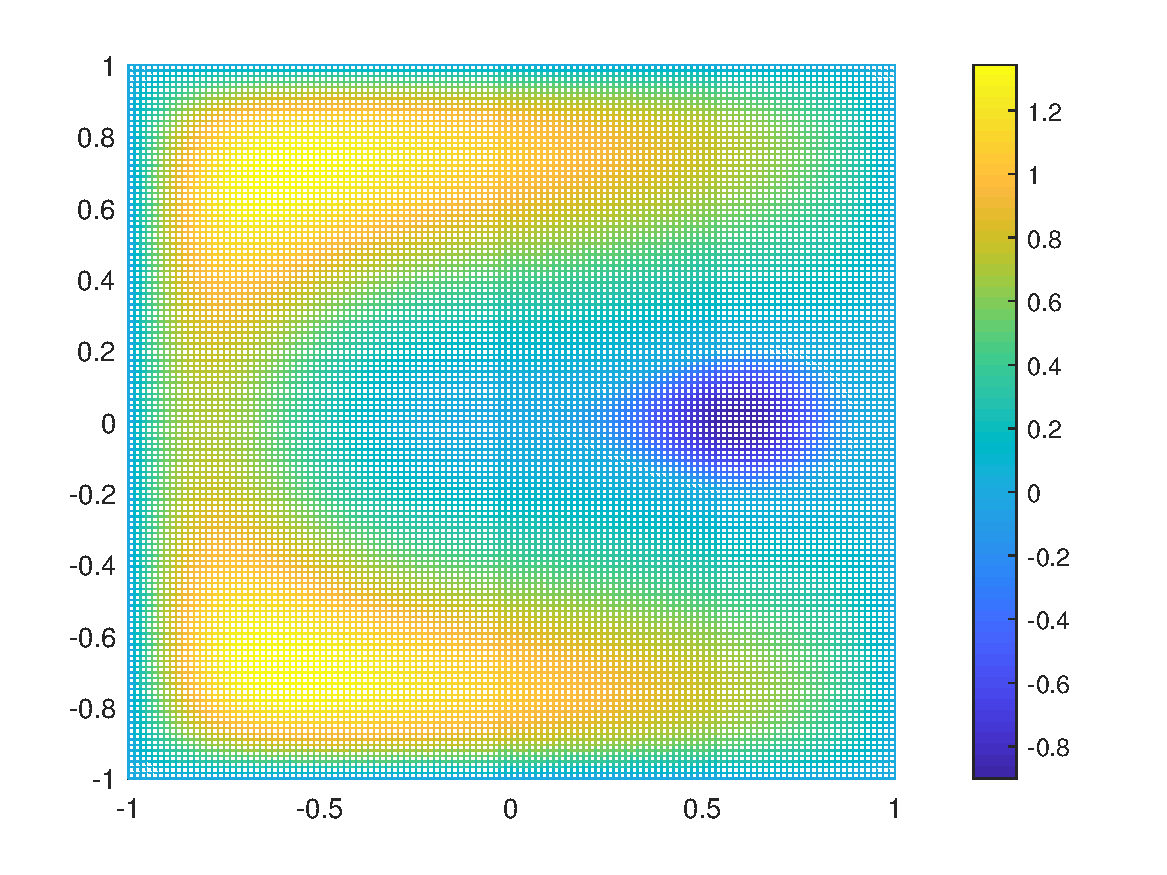}}
	\subfigure[Top view]{
		\label{Fig.b1} \includegraphics[width=0.36\textwidth]{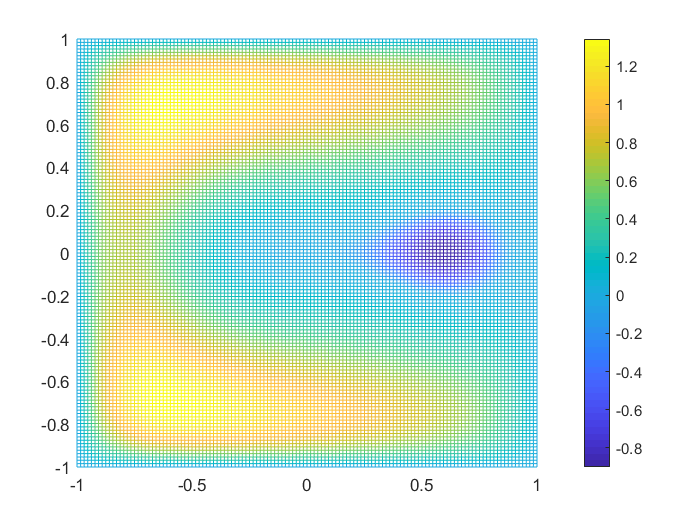}}
	\caption{~The computed adjoint state with N=128.}
	\label{13adjoint stste}
\end{figure}
\vspace{-0.3cm}
\begin{figure}[H]
	\centering
	\subfigure[Computed control]{
		\label{Fig.a2}	
		\includegraphics[width=0.36\textwidth]{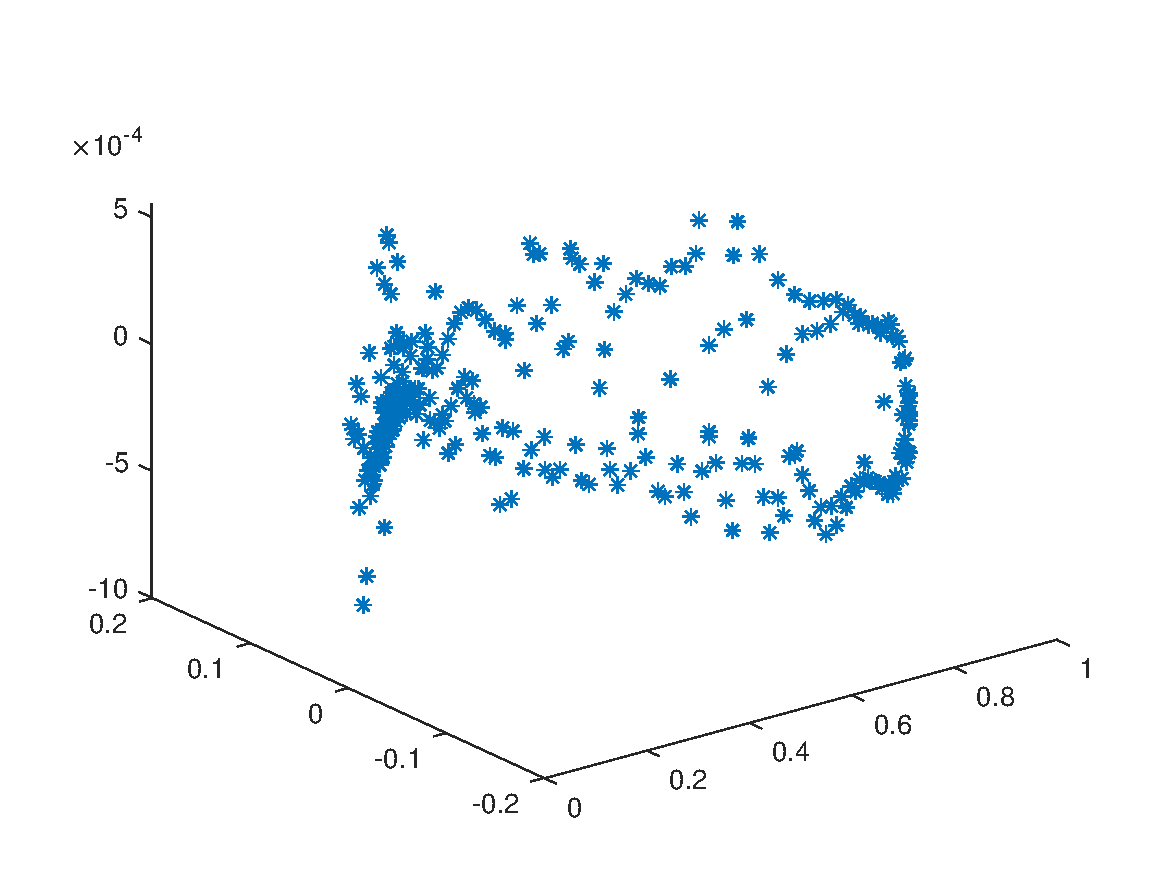}}
	\subfigure[Top view]{
		\label{Fig.b2}
		\includegraphics[width=0.36\textwidth]{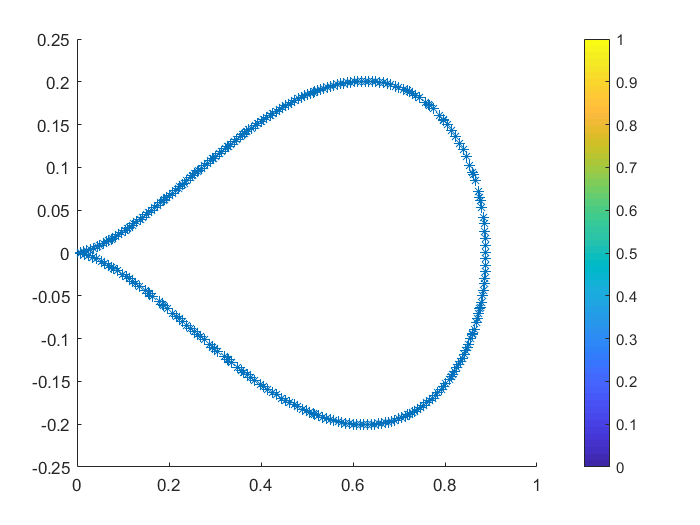}}
	\caption{~The computed control (without constraint) with N=128.}
	\label{13control}
\end{figure}
\vspace{-0.6cm}
\begin{figure}[H]
	\centering
	\subfigure[state error]{
		\label{Fig.a3}	
		\includegraphics[width=0.36\textwidth]{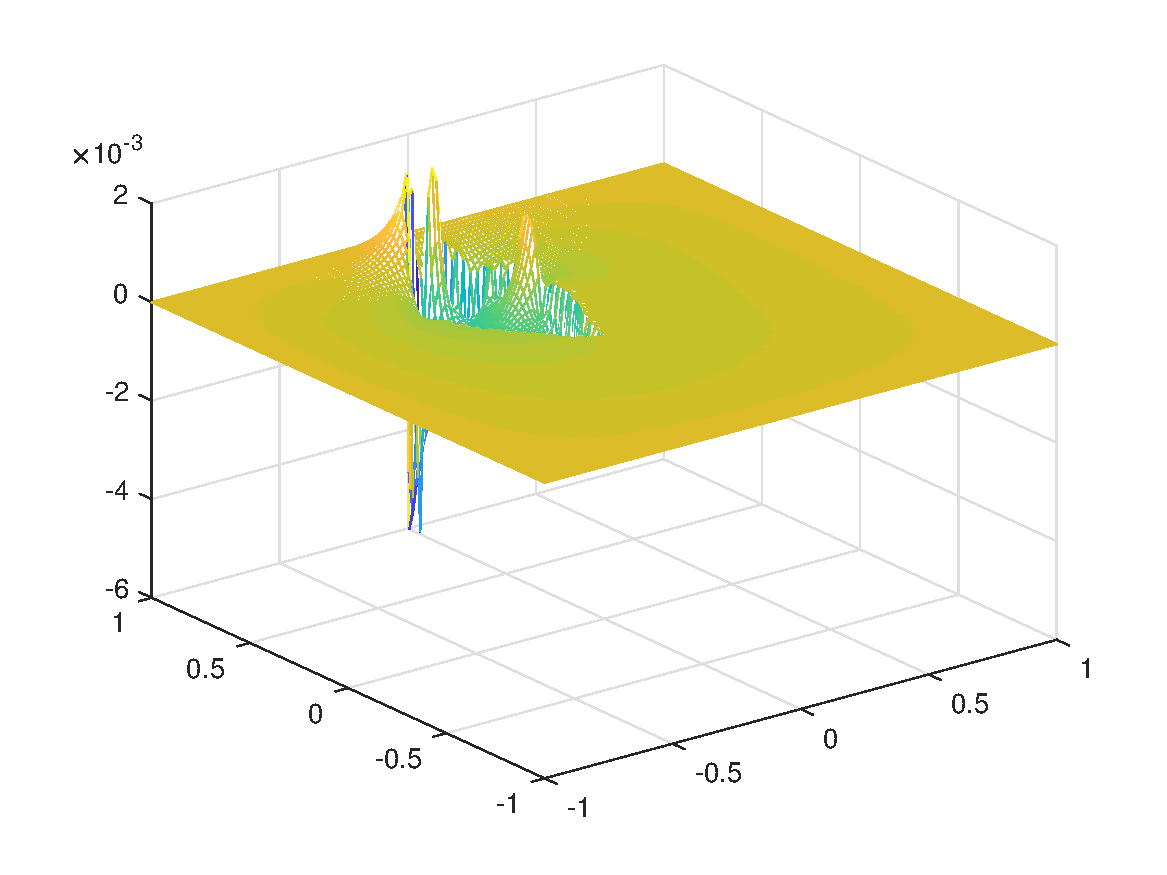}}
	\subfigure[adjoint state error]{
		\label{Fig.b3}
		\includegraphics[width=0.36\textwidth]{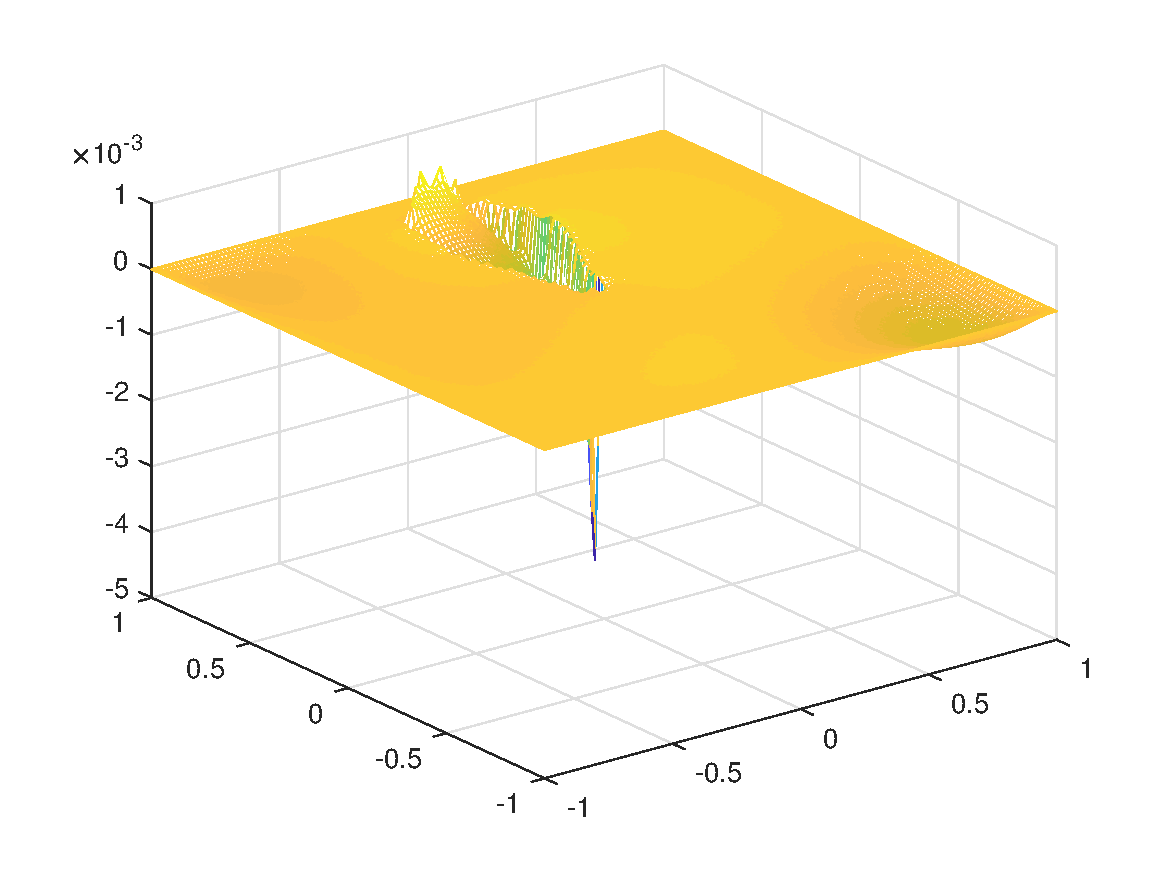}}
	\subfigure[control error]{
		\label{Fig.c}	
		\includegraphics[width=0.36\textwidth]{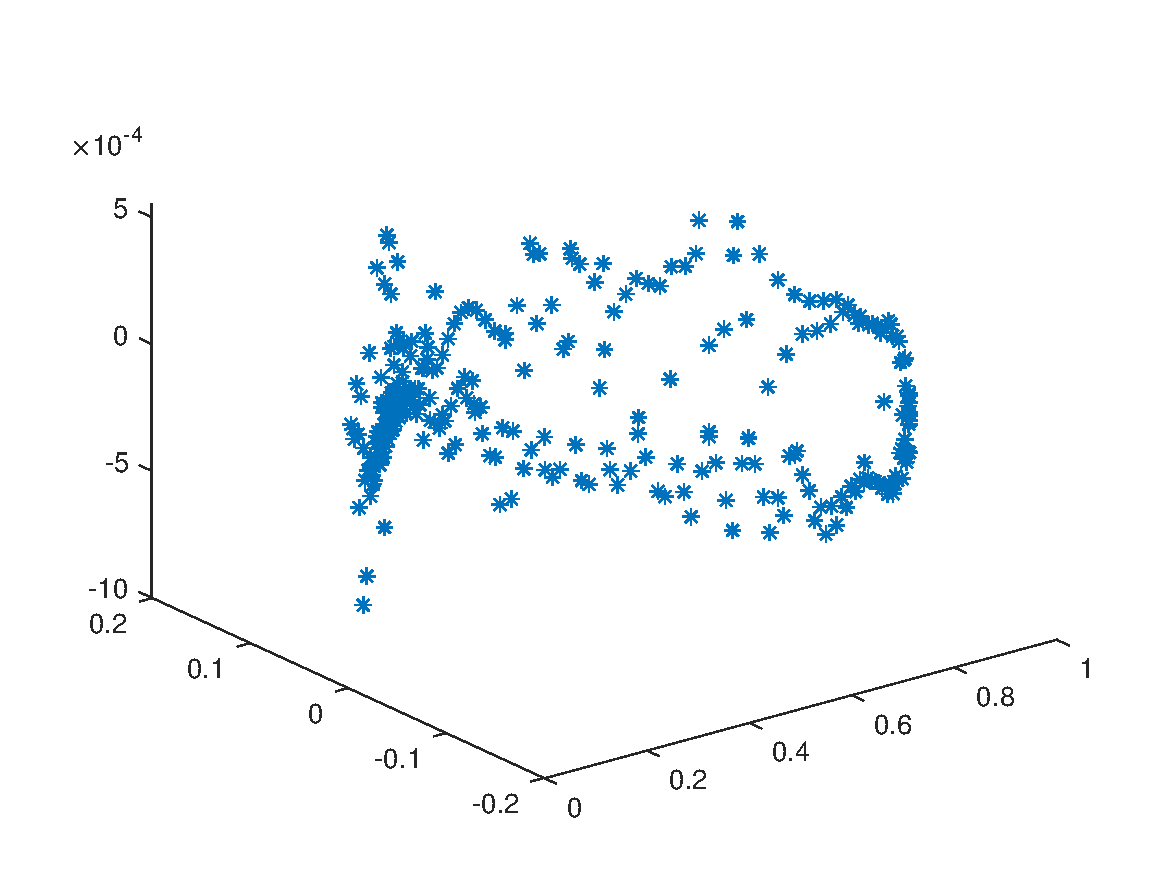}}
	\caption{ ~The error of $y^{*}, p^{*}, u^{*}$ (without constraint) with N=128.}
	\label{13error}
\end{figure}

{\bf Case 2} In this case, we consider the control variable with constraints as follows:
\begin{equation*}
	u^{*}\left(x_{1}, x_{2}\right)=\max\left(0,\sin(2\pi x_{1})\right), ~~for~\left(x_{1}, x_{2}\right) \in \Gamma.
\end{equation*}
Other quantities are set as in case 1.

\begin{table}[h]
	\centering
	\footnotesize
	\begin{center}
		\caption
		{\footnotesize The $L^{2}$ error and convergence order of the state $y^{*}$, the adjoint state $p^{*}$ and the control $u^{*}$ (with constraint).}
		\begin{tabular}{ccccccc}
			\hline
			$N$  &$||y^{*}-y^{*}_h||_{L^{2}(\Omega)}$  &$Order$  &$||p^{*}-p^{*}_h||_{L^{2}(\Omega)}$  &$Order$  &$||u^{*}-u^{*}_h||_{L^{2}(\Gamma)}$  &$Order$\\
			\hline
			32  &1.9201E-02  &\  &7.0154E-03  &\  &5.2374E-03  \ \\
			
			64  &4.6934E-03  &2.0304  &1.6855E-03  &2.0573  &1.7044E-03  &1.6196 \\
			
			128 &8.9902E-04  &2.3842  &3.7954E-04  &2.1508  &3.9090E-04  &2.1244\\
			
			256 &2.2221E-04  &2.0164  &9.3518E-05  &2.0210  &1.0352E-04  &1.9169\\
			
			512 &5.5037E-05  &2.0135  &2.2577E-05  &2.0504  &2.5156E-05  &2.0409\\
			\hline
		\end{tabular}
	\end{center}
\end{table}
\vspace{-0.5cm}
\begin{table}[h]
	\centering
	\footnotesize
	\begin{center}
		\caption
		{\footnotesize The $L^{\infty}$ error and convergence order of the state $y^{*}$, the adjoint state $p^{*}$ and the control $u^{*}$ (with constraint).}
		\begin{tabular}{ccccccc}
			\hline
			$N$  &$||y^{*}-y^{*}_h||_{L^{\infty}(\Omega)}$  &$Order$  &$||p^{*}-p^{*}_h||_{L^{\infty}(\Omega)}$  &$Order$  &$||u^{*}-u^{*}_h||_{L^{\infty}(\Gamma)}$  &$Order$\\
			\hline
			32  &1.1029E-01  &\  &5.9440E-02  &\  &1.2407E-02  \ \\
			
			64  &2.5641E-02  &2.1048  &1.5336E-02  &1.9546  &4.1846E-03  &1.5679 \\
			
			128 &5.7581E-03  &2.1548  &4.1636E-03  &1.8810  &9.9573E-04  &2.0713\\
			
			256 &1.5122E-03  &1.9290  &1.1245E-03  &1.8885  &2.4530E-04  &2.0212\\
			
			512 &4.0406E-04  &1.9040  &2.9088E-04  &1.9508  &6.5246E-05  &1.9106\\
			\hline
		\end{tabular}
	\end{center}
\end{table}

Similarly, tables 3-4 show the $L^{2}$ error and convergence order, the $L^{\infty}$ error and convergence order of the state $y^{*}$, the adjoint state $p^{*}$ and the control $u^{*}$ with constraint, respectively. It can be clearly seen that we get the error convergence order of all three variables is still second order when the control variable has constraints.

\begin{figure}[H]
	\centering
	\subfigure[Computed state]{
		\label{Fig.a4}
		\includegraphics[width=0.36\textwidth]{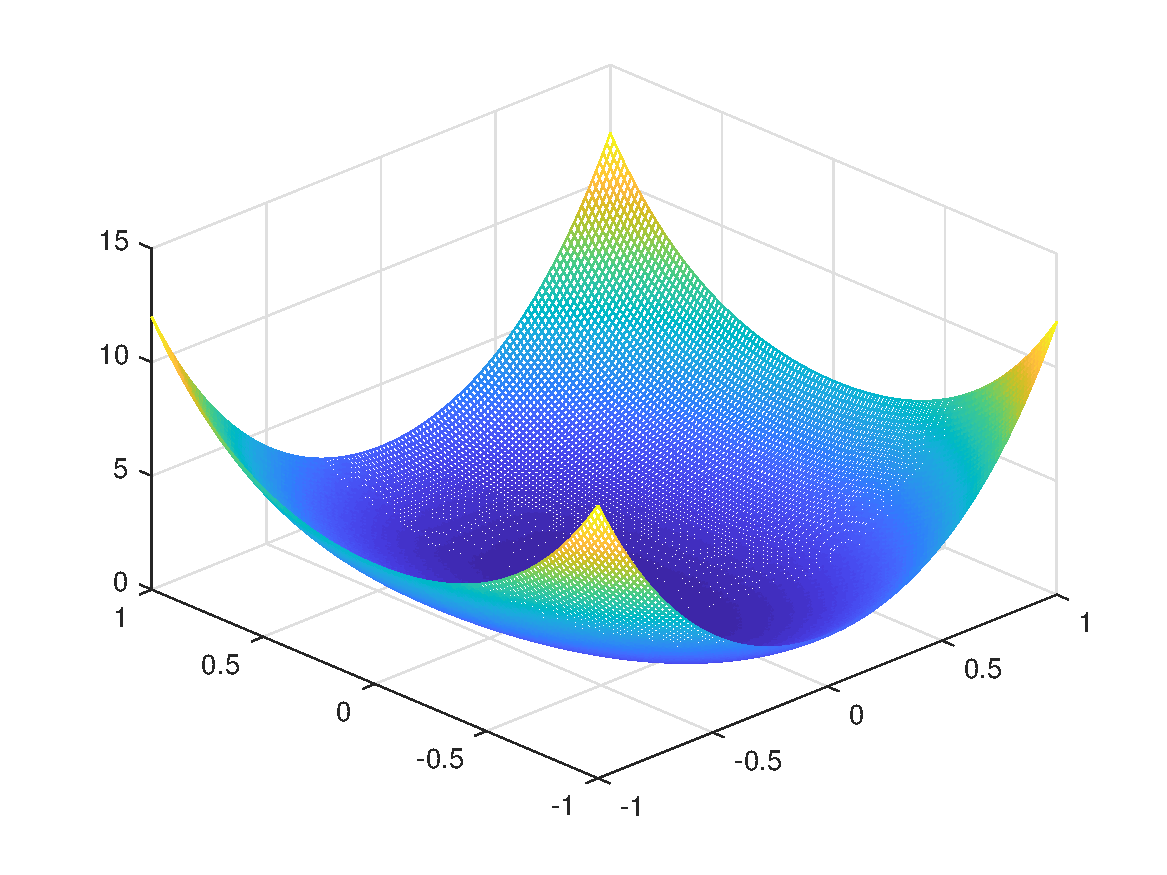}}
	\subfigure[Top view]{
		\label{Fig.b4}
		\includegraphics[width=0.36\textwidth]{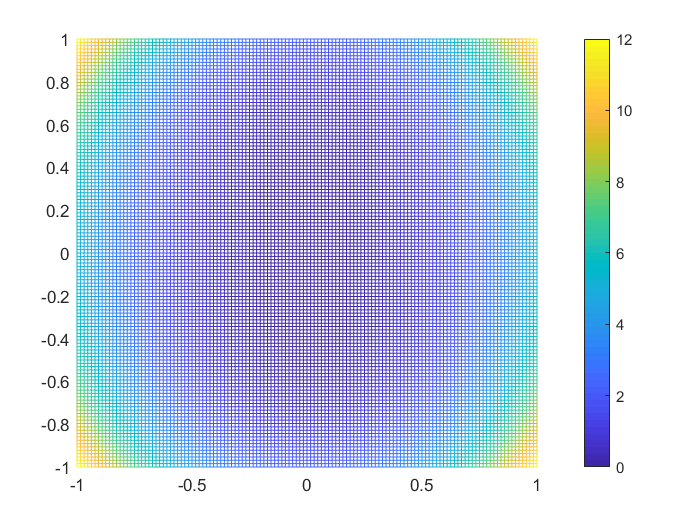}}
	\caption{~The computed state with N=128.}
	\label{13ysstate}
\end{figure}

\begin{figure}[H]
	\centering
	\subfigure[Computed adjoint state]{
		\label{Fig.a5}
		\includegraphics[width=0.36\textwidth]{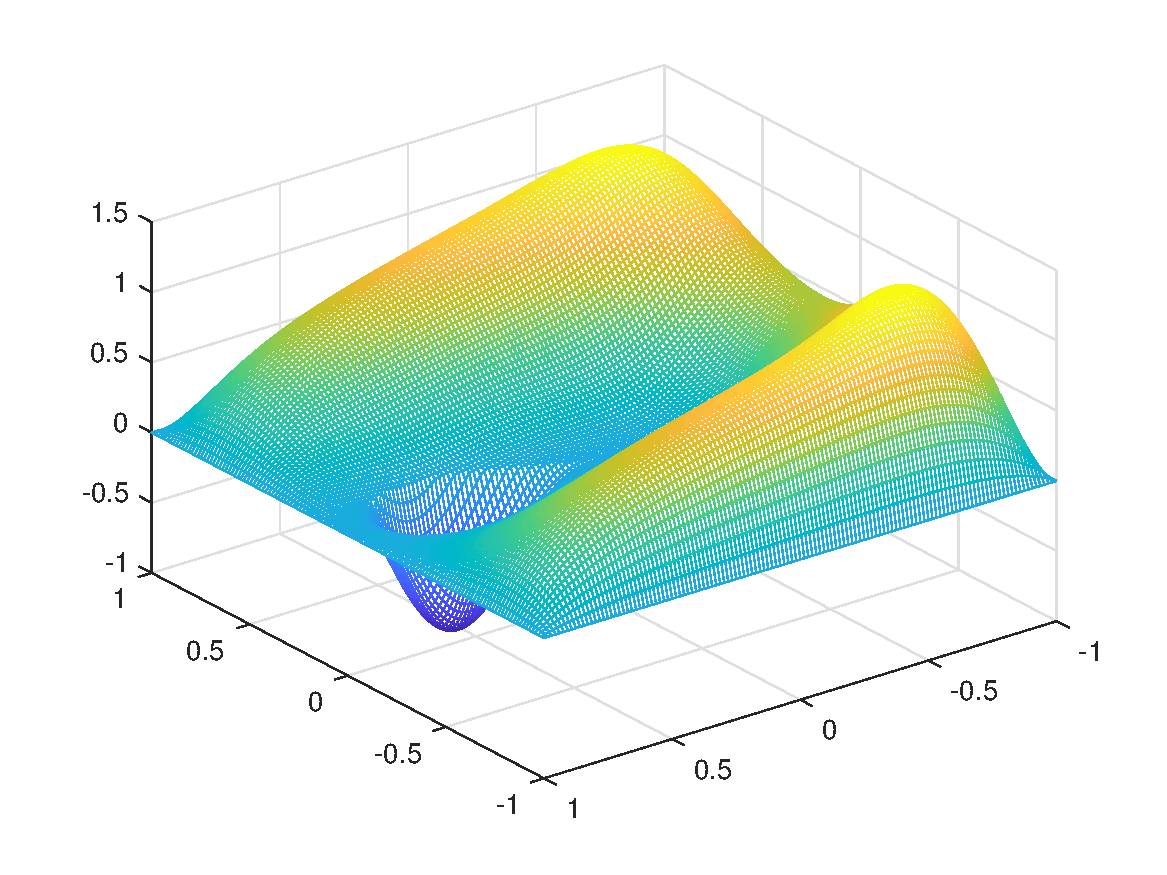}}
	\subfigure[Top view]{
		\label{Fig.b5}
		\includegraphics[width=0.36\textwidth]{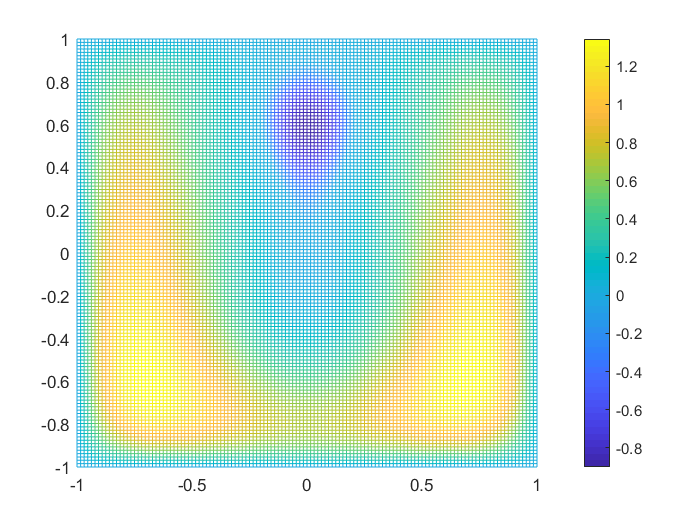}}
	\caption{~The computed adjoint state with N=128.}
	\label{13ysadjoint stste}
\end{figure}
\vspace{-0.3cm}
\begin{figure}[H]
	\centering
	\subfigure[Computed control]{
		\label{Fig.a6}	
		\includegraphics[width=0.36\textwidth]{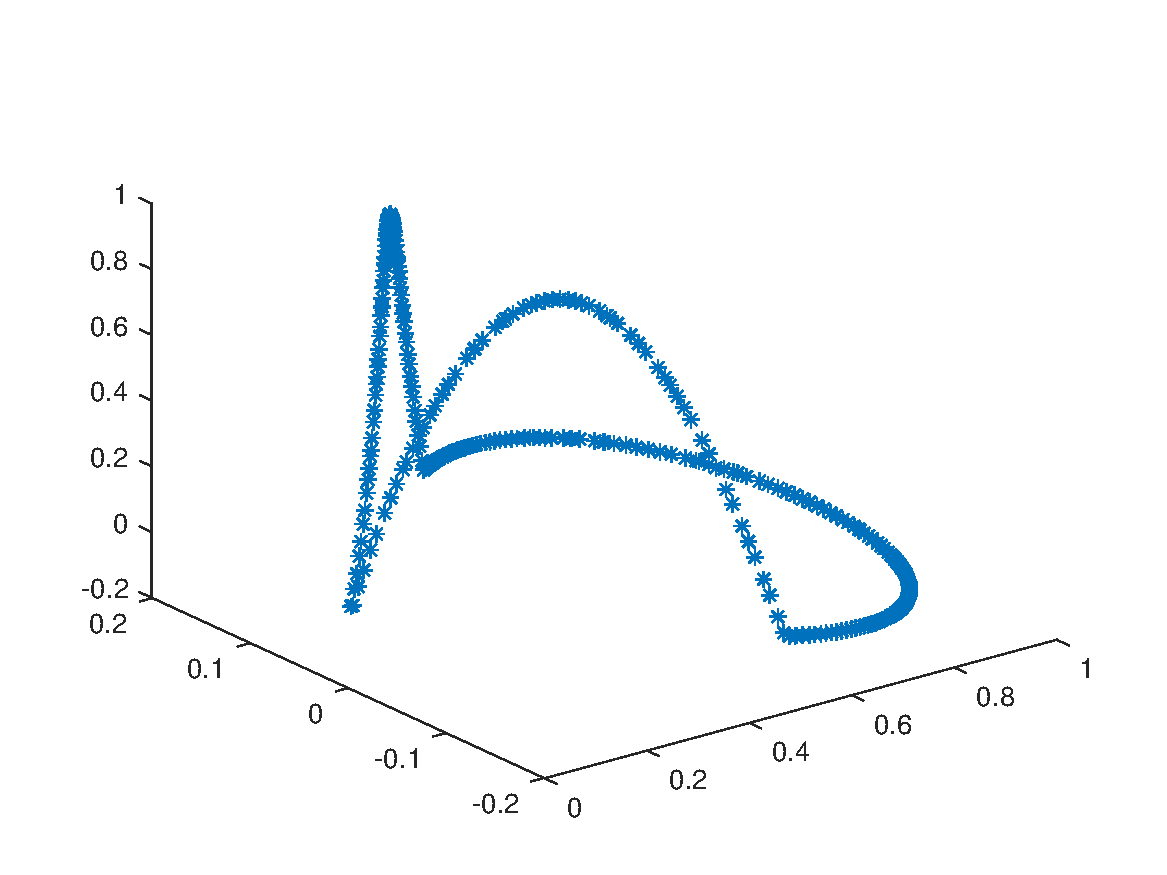}}
	\subfigure[Top view]{
		\label{Fig.b6}
		\includegraphics[width=0.36\textwidth]{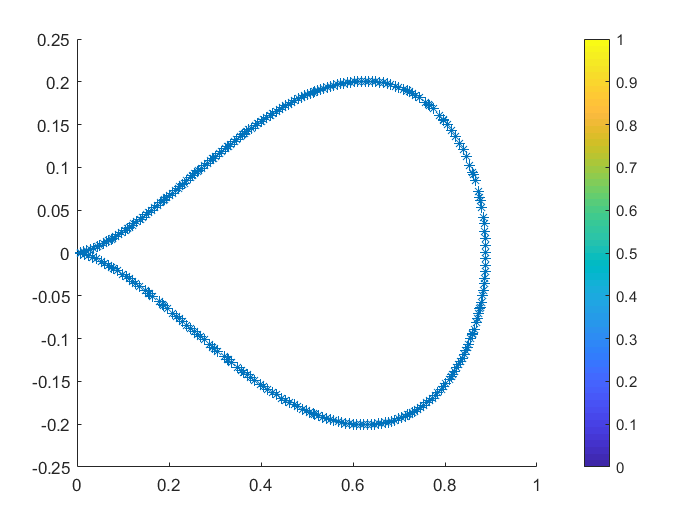}}
	\caption{~The computed control (without constraint) with N=128.}
	\label{13yscontrol}
\end{figure}
\vspace{-0.6cm}
\begin{figure}[H]
	\centering
	\subfigure[state error]{
		\label{Fig.a7}	
		\includegraphics[width=0.36\textwidth]{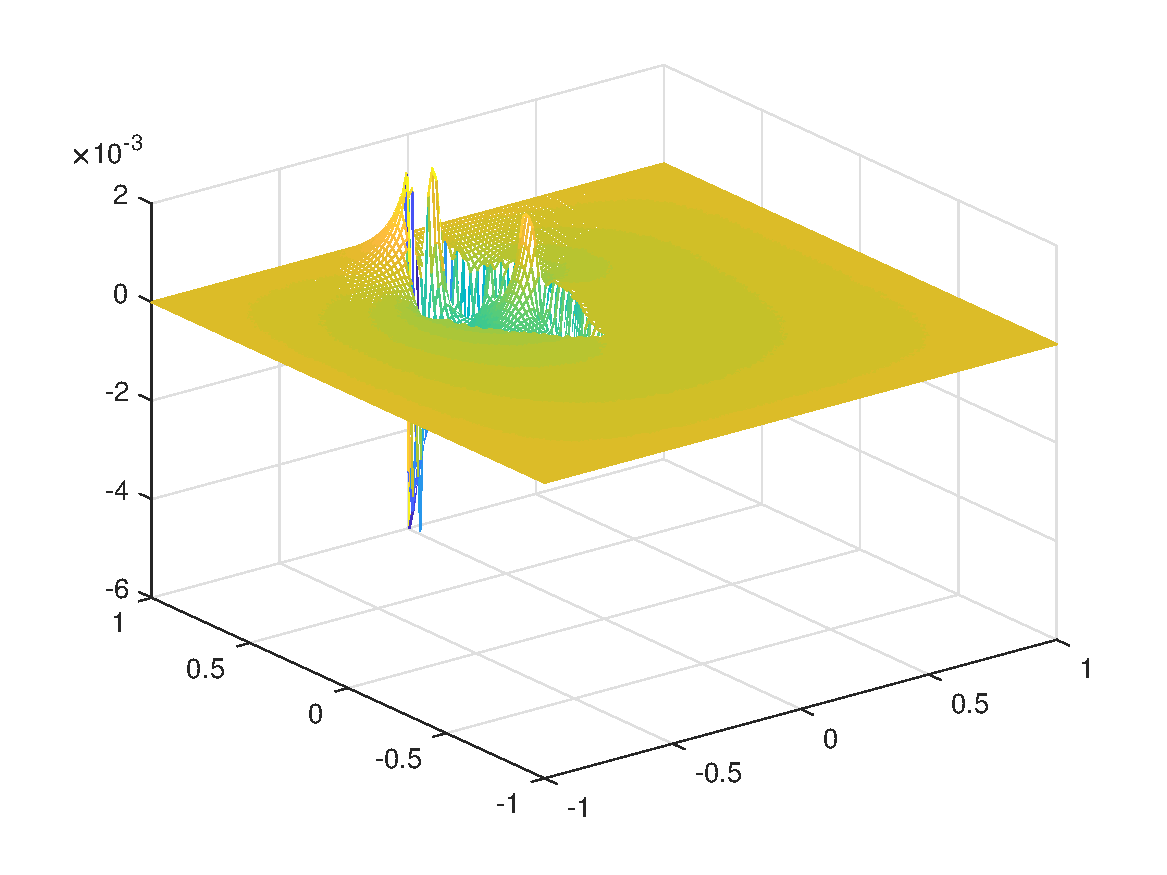}}
	\subfigure[adjoint state error]{
		\label{Fig.b7}
		\includegraphics[width=0.36\textwidth]{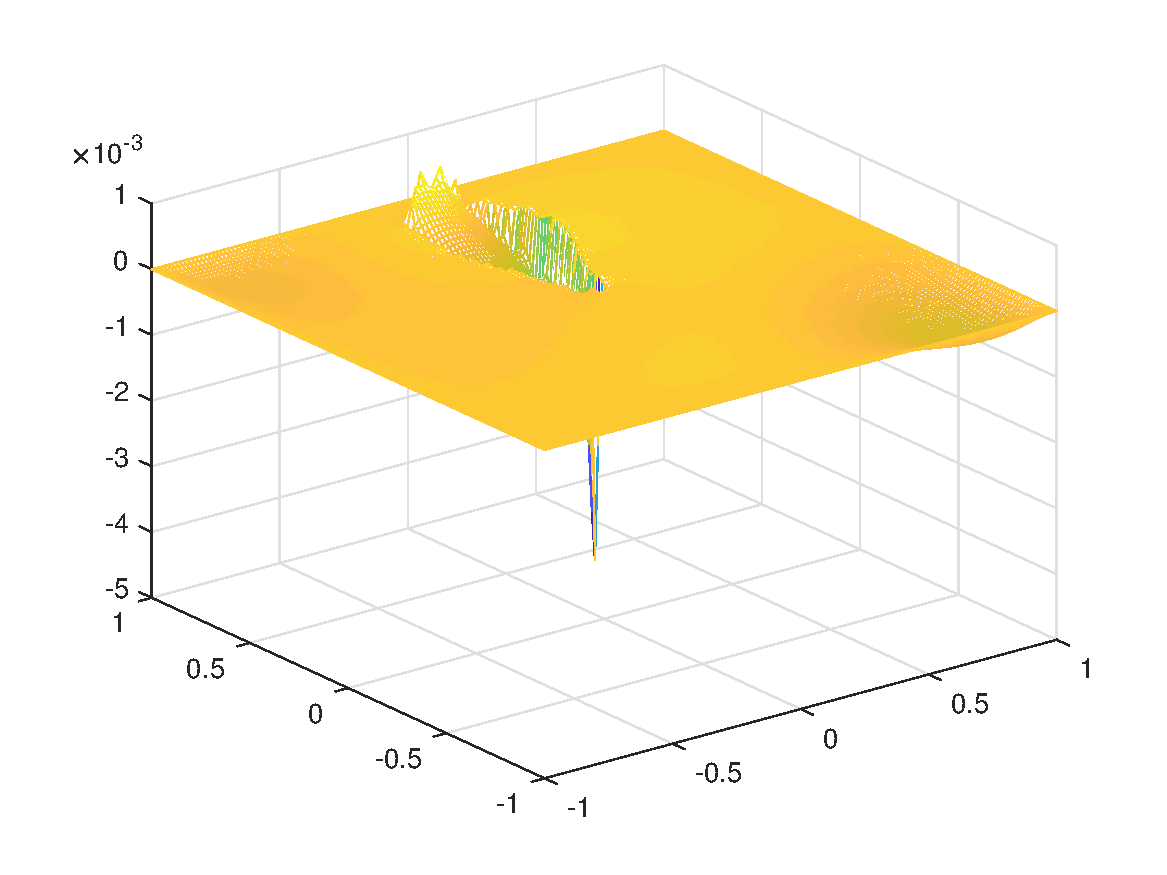}}
	\subfigure[control error]{
		\label{Fig.c1}	
		\includegraphics[width=0.36\textwidth]{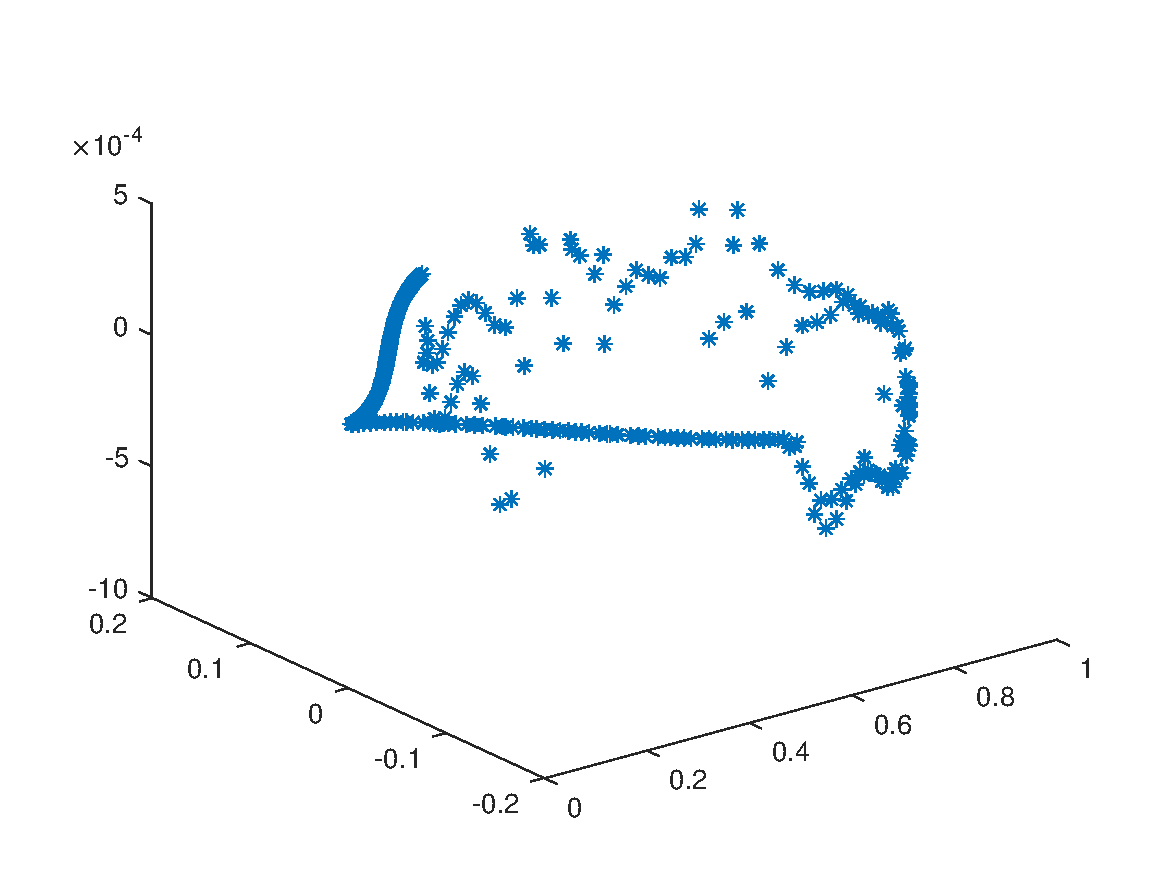}}
	\caption{~The error of $y^{*}, p^{*}, u^{*}$ (without constraint) with N=128.}
	\label{13yserror}
\end{figure}

Also, the numerical solution images of the state, the adjoint state and the control with $N=128$ are displayed in Figures $\ref{13ysstate}$-$\ref{13yscontrol}$; The error images of the state, the adjoint state and the control with $N=128$ are displayed in Figures $\ref{13yserror}$. Comparing Figures $\ref{13control}$ and $\ref{13yscontrol}$ we can observe that the control constraints indeed serve a useful purpose.

\section{Conclusion}\label{sec_con}
In this paper we studied Neumann interface control problem with elliptic PDE constraints and developed and analyzed an efficient computational method for solving these problems. An analysis of the existence, uniqueness and regularity of the optimal solution was carried out by the first order optimality condition. In numerical approximation of the optimality system, we applied a nonconforming linear immersed finite element method to approximate the state and adjoint state, while the control which acts on the interface is approximated by the piecewise constant function. We considered the interface optimal control problem with discontinuous coefficient subject to the lower regularity of the state equation in convex, polygonal domains. The standard linear finite element method can not achieve optimal convergence when the uniform mesh is used. Optimal error estimates are derived for the control, state and adjoint state, which are the same as that of a Neumann boundary control problem without interfaces. Both approaches of postprocessing and variational discretization are applied for the error estimates of the control on the interface and we prove optimal error estimates of the state and adjoint state, and a better approximation for the error estimates of the control on the interface. Numerical examples demonstrate the correctness of theoretical results. The numerical analysis of other methods for this kind of interface control problems will be considered in the future.

\subsection*{Acknowledgements}

The first author is partially supported by the National Natural Science Foundation of China grant No.11971241. %{\color{blue}{The authors would like to thank anonymous referees for their useful comments
%and suggestions which have helped to improve the paper greatly.}}

%\renewcommand\refname{References}

\bibliographystyle{abbrv}
%\bibliography{docultex}

%\end{CJK*}
\end{document}